\newtheorem{proposition}{Proposition}[section]
\newtheorem{theorem}{Theorem}[section]
\newtheorem{corollary}{Corollary}[theorem]
\theoremstyle{definition}
\newtheorem{definition}{Definition}[section]
\theoremstyle{remark}
\newtheorem{remark}{Remark}[section]
\newtheorem{example}{Example}[section]
\begin{document}

\title[Quadratic \(p\)-ring spaces]
{Quadratic \(p\)-ring spaces\\for counting dihedral fields}

\author{Daniel C. Mayer}
\address{Naglergasse 53\\8010 Graz\\Austria}
\email{algebraic.number.theory@algebra.at}
\urladdr{http://www.algebra.at}
\thanks{Research supported by the
Austrian Science Fund (FWF):
P 26008-N25}

\subjclass[2000]{Primary 11R29, 11R20, 11R16, 11R11; Secondary 11Y40}
\keywords{\(p\)-multiplicity, discriminant, dihedral field, cubic field, \(p\)-ring space, quadratic field}

\date{November 12, 2013}


\begin{abstract}

Let \(p\) denote an odd prime.
For all \(p\)-admissible conductors \(c\)
over a quadratic number field \(K=\mathbb{Q}(\sqrt{d})\),
\(p\)-ring spaces \(V_p(c)\) modulo \(c\)
are introduced by defining a morphism
\(\psi:\,f\mapsto V_p(f)\)
from the divisor lattice \(\mathbb{N}\) of positive integers
to the lattice \(\mathcal{S}\) of subspaces
of the direct product \(V_p\) of
the \(p\)-elementary class group \(\mathcal{C}/\mathcal{C}^p\) and
unit group \(U/U^p\) of \(K\).
Their properties admit an exact count
of all extension fields \(N\) over \(K\),
having the dihedral group of order \(2p\)
as absolute Galois group \(\mathrm{Gal}(N|\mathbb{Q})\)
and sharing a common discriminant \(d_N\) and conductor \(c\) over \(K\).
The number \(m_p(d,c)\) of these extensions is given by a formula
in terms of positions of \(p\)-ring spaces in \(\mathcal{S}\),
whose complexity increases
with the dimension of the vector space \(V_p\)
over the finite field \(\mathbb{F}_p\),
called the modified \(p\)-class rank \(\sigma_p\) of \(K\).
Up to now,
explicit multiplicity formulas for discriminants
were known for quadratic fields with
\(0\le\sigma_p\le 1\)
only.
Here,
the results are extended to
\(\sigma_p=2\),
underpinned by concrete numerical examples.

\end{abstract}

\maketitle



\section{Introduction}
\label{s:Intro}

The first class field theoretic attempt
to determine the number \(N(d_L)\)
of all non-isomorphic cubic fields \(L\vert\mathbb{Q}\)
with assigned discriminant \(d_L\)
is due to Hasse
\cite[\S 4, pp. 578--582]{Ha}.
His theorem
\cite[Satz 8, p. 581]{Ha}
reduces the problem to a system of linear congruences 
without giving a general explicit formula for \(N(d_L)\).
However, Hasse states two special formulas
\cite[I, p. 581]{Ha}
and
\cite[II, p. 582]{Ha}
which we recall in Theorems
\ref{thm:UnRmf}
and
\ref{thm:Codim0}
as formulas of codimension \(0\),
after providing adequate concepts and notation in section
\ref{s:QdrInv}.
We point out that other approaches to the enumeration problem of cubic fields
have been initiated by Berwick
\cite{Bw}
via pairs of dual quadratic fields,
by Reichardt
\cite{Re}
via Kummer theory,
and by Belabas
\cite{Be1}
via cubic forms. 

In
\cite[\S 3, p. 835]{Ma3}
we extended Hasse's theory to fields \(L\vert\mathbb{Q}\)
of arbitrary odd prime degree \(p\ge 3\)
with dihedral normal closure \(N\) of degree \(2p\).
A central role is played by the unique quadratic subfield \(K\) of \(N\),
since \(N\) is a cyclic relative extension of degree \(p\) of \(K\),
contained within the \(p\)-ring class field \(\mathrm{F}_{c,p}(K)\) of \(K\)
modulo some \(p\)-\textit{admissible conductor} \(c\) over \(K\).
The \(p\)-class rank \(\varrho_p\), the discriminant \(d\), and certain other invariants of \(K\),
called \(p\)-\textit{defects} \(\delta_p(f)\) of positive integers \(f\ge 1\) with respect to \(K\),
determine the \(p\)-\textit{multiplicity} \(m_p(d,c)\) of \(c\) with respect to \(d\),
that is the number of dihedral fields \(N\) sharing \(c\) as their common conductor over \(K\).
The corresponding formulas of codimension \(1\) for \(m_p(d,c)\) are provided by Theorems
\ref{thm:Codim1Reg}
and
\ref{thm:Codim1Irr}.

In the present article we give an essential reinterpretation of \(p\)-defects \(\delta_p(f)\)
as codimensions of \(p\)-\textit{ring spaces} \(V_p(f)\)
modulo positive integers \(f\ge 1\)
in the \(\mathbb{F}_p\)-vector space \(V_p\)
of non-trivial principal \(p\)th powers of ideals,
which is isomorphic to the direct product of
the \(p\)-elementary class group \(\mathcal{C}/\mathcal{C}^p\) and
the \(p\)-elementary unit group \(U/U^p\) of \(K\).
The introduction of this concept enables us
to derive more sophisticated formulas of higher codimension \(2\)
for multiplicities \(m_p(d,c)\) in Theorems
\ref{thm:Codim2Reg}
and
\ref{thm:Codim2Irr}.
Several corollaries and numerical examples in sections
\ref{s:CntDihFld}
and
\ref{s:MainThm}
provide useful algorithms for computing \(p\)-defects \(\delta_p(f)\) explicitly
and will hopefully remove their attribute of being \lq assez myst\'erieux\rq\ in
\cite[p. 27]{BeFv1}
and
\cite[p. 1517]{BeFv2}.



\section{Quadratic invariants}
\label{s:QdrInv}

Throughout this section
we fix a quadratic number field \(K=\mathbb{Q}(\sqrt{d})\)
with discriminant \(d\), ring of integers (maximal order) \(\mathcal{O}\), and unit group \(U\).
The group of fractional ideals of \(K\) is denoted by \(\mathcal{I}\)
and the subgroup of principal ideals by \(\mathcal{P}\).



\subsection{Modulus of definition}
\label{ss:ModOfDcl}

Since each ideal class of \(K\) contains an ideal coprime to any given integer \(f\ge 1\),
the class group of \(K\) can be defined with respect to the modulus \(f\) by the relation
\[\mathcal{C}=\mathcal{I}/\mathcal{P}=\mathcal{I}(f)\mathcal{P}/\mathcal{P}\simeq\mathcal{I}(f)/\mathcal{P}\cap\mathcal{I}(f)=\mathcal{I}(f)/\mathcal{P}(f),\]
where \(X(f)\) denotes the elements coprime to \(f\) of an arbitrary system \(X\) of ideals or numbers of \(K\).
The selection of such a \textit{modulus \(f\) of definition}
will be necessary to exclude the primes which ramify in the desired extensions \(N\vert K\)
and thus to enable an application of Artin's reciprocity law
\cite{Ar}.
Further, the choice of sufficiently large moduli \(f\)
will admit the comparison of \(p\)-ring spaces for different conductors.
To compare this ideal theoretic approach with the id\`ele theoretic version of class field theory, see
\cite[Ch. IV, \S\ 8, p. 102]{Nk1}
and
\cite[Ch. VI, \S\ 7, p. 405]{Nk2}.



\subsection{\(p\)-elementary ring class groups}
\label{ss:RingClGrp}

With an arbitrary prime \(p\ge 2\),
let the \(p\)-elementary class group of \(K\), as a subgroup of the \(p\)-class group \(\mathrm{Syl}_p(\mathcal{C})\),
be defined by \(\mathcal{C}_p=\mathcal{I}_p/\mathcal{P}\),
where \(\mathcal{I}_p=\lbrace j\in\mathcal{I}\mid j^p\in\mathcal{P}\rbrace\).
For our purpose an isomorphic representation, obtained by factoring out \(p\)th powers, is more adequate,
\[\mathcal{C}_p\simeq\mathcal{I}(f)/\mathcal{P}(f)\mathcal{I}(f)^p.\]
Similarly, the \(p\)-elementary \textit{ring class group} of \(K\) modulo an integer \(c\ge 1\) can be represented
with the aid of any integer modulus \(f\) divisible by \(c\) as
\[\mathcal{C}_{c,p}\simeq\mathcal{I}(f)/\mathcal{R}_c\mathcal{I}(f)^p,\]
where the subgroup \(\mathcal{R}_c=\lbrace\alpha\mathcal{O}\mid\alpha\in R_c\rbrace\) of \(\mathcal{P}(c)\) is called the \textit{ring} modulo \(c\) of \(K\),
with generators \(R_c=\mathbb{Q}(c)S_c\)
extending the generators \(S_c=\lbrace\alpha\in K\mid\alpha\equiv 1\pmod{c}\rbrace\) of the \textit{ray} modulo \(c\) of \(K\).

Since all groups involved are \(p\)-elementary abelian, the quotient relation
\[\mathcal{C}_p\simeq\mathcal{I}(f)/\mathcal{P}(f)\mathcal{I}(f)^p\simeq\Bigl\lbrack\mathcal{I}(f)/\mathcal{R}_c\mathcal{I}(f)^p\Bigr\rbrack\Bigm/\Bigl\lbrack\mathcal{P}(f)\mathcal{I}(f)^p/\mathcal{R}_c\mathcal{I}(f)^p\Bigr\rbrack\]
is equivalent to a direct product relation
\[\mathcal{C}_{c,p}\simeq\mathcal{I}(f)/\mathcal{R}_c\mathcal{I}(f)^p\simeq\Bigl\lbrack\mathcal{I}(f)/\mathcal{P}(f)\mathcal{I}(f)^p\Bigr\rbrack\times\Bigl\lbrack\mathcal{P}(f)\mathcal{I}(f)^p/\mathcal{R}_c\mathcal{I}(f)^p\Bigr\rbrack\]
and the ordinary \(p\)-class rank \(\varrho_p=\dim_{\mathbb{F}_p}(\mathcal{C}_p)\) is connected with
the \(p\)-ring class rank modulo \(c\), \(\varrho_{c,p}=\dim_{\mathbb{F}_p}(\mathcal{C}_{c,p})\), by the equation
\[(2.2.1) \qquad \varrho_{c,p}=\varrho_p+\Delta\varrho_{c,p},\]
where \(\Delta\varrho_{c,p}=\dim_{\mathbb{F}_p}(\mathcal{P}(f)\mathcal{I}(f)^p/\mathcal{R}_c\mathcal{I}(f)^p)\)
denotes the \textit{increment} of the \(p\)-ring class rank modulo \(c\),
which we have to analyze now.



\subsection{Principal ideal powers and prime residue classes}
\label{ss:PrncIdPwr}

Transition \(\alpha\mapsto\alpha\mathcal{O}\) from numbers to principal ideals induces an epimorphism
\[K(f)\to\mathcal{P}(f)/\mathcal{P}(f)\cap\mathcal{R}_c\mathcal{I}(f)^p\]
with kernel \(R_cI_p(f)U=R_cI_p(f)\),
where \(I_p=\lbrace\alpha\in K\mid\alpha\mathcal{O}=j^p\text{ for some }j\in\mathcal{I}\rbrace\)
denotes the group of \textit{generators of principal \(p\)th powers of ideals} of \(K\),
which contains the product \(UK^p\) as a subgroup.
Therefore, we have isomorphisms
\[\mathcal{P}(f)\mathcal{R}_c\mathcal{I}(f)^p/\mathcal{R}_c\mathcal{I}(f)^p\simeq\mathcal{P}(f)/\mathcal{P}(f)\cap\mathcal{R}_c\mathcal{I}(f)^p\simeq K(f)/R_cI_p(f)\]
and again the quotient relation of elementary abelian \(p\)-groups
\[K(f)/R_cI_p(f)\simeq\Bigl\lbrack K(f)/R_cK(f)^p\Bigr\rbrack\Bigm/\Bigl\lbrack R_cI_p(f)/R_cK(f)^p\Bigr\rbrack\]
is equivalent to a direct product relation
\[(2.3.1) \qquad K(f)/R_cK(f)^p\simeq\Bigl\lbrack K(f)/R_cI_p(f)\Bigr\rbrack\times\Bigl\lbrack R_cI_p(f)/R_cK(f)^p\Bigr\rbrack.\]
Now, \(K(f)/R_cK(f)^p\) is the \(p\)-elementary subgroup of
\[K(f)/R_c=K(f)/\mathbb{Q}(c)S_c\simeq\Bigl\lbrack K(f)/S_c\Bigr\rbrack\Bigm/\Bigl\lbrack\mathbb{Q}(c)S_c/S_c\Bigr\rbrack
\simeq\Bigl\lbrack K(f)/S_c\Bigr\rbrack\Bigm/\Bigl\lbrack\mathbb{Q}(c)/S_c\cap\mathbb{Q}(c)\Bigr\rbrack\]
and the local description of the congruence relation \(\equiv 1\pmod{c}\)
by the exact sequences given in the proof of
\cite[Thm. 4.4, p. 51]{Ma1}
yields the isomorphism
\[\Bigl\lbrack K(f)/S_c\Bigr\rbrack\Bigm/\Bigl\lbrack\mathbb{Q}(c)/S_c\cap\mathbb{Q}(c)\Bigr\rbrack\simeq U(\mathcal{O}/c\mathcal{O})\Bigm/U(\mathbb{Z}/c\mathbb{Z})\]
to the quotient of groups of prime residue classes modulo \(c\mathcal{O}\) of \(K\) and modulo \(c\mathbb{Z}\) of \(\mathbb{Q}\).

Since the prime \(p=2\) reveals an exceptional behavior
with respect to the \(p\)-rank of the quotient \(U(\mathcal{O}/c\mathcal{O})\Bigm/U(\mathbb{Z}/c\mathbb{Z})\)
and since it is irrelevant for investigating dihedral fields of degree \(2p\),
we restrict ourselves to odd primes \(p\ge 3\) in the rest of the paper.



\begin{proposition}
\label{prp:PrimResCl}

Let \(K\) be a quadratic number field,
\(p\ge 3\) an odd prime,
and \(c,f\) positive integers such that \(c\mid f\).

\begin{enumerate}

\item
The \(p\)-rank of the \(p\)-elementary subgroup of the quotient
\[U(\mathcal{O}/c\mathcal{O})\Bigm/U(\mathbb{Z}/c\mathbb{Z})\simeq K(f)/R_c\]
of prime residue class groups modulo \(c\) of \(K\) and \(\mathbb{Q}\)
is given by
\[\dim_{\mathbb{F}_p}(K(f)/R_cK(f)^p)=t+w.\]

\item
The \(p\)-rank of the \(p\)-elementary subgroup of the
prime residue class group modulo \(c\) of \(K\),
\[U(\mathcal{O}/c\mathcal{O})\simeq K(f)/S_c\]
is given by
\[\dim_{\mathbb{F}_p}(K(f)/S_cK(f)^p)=t+\tilde t+w+\tilde w.\]

\end{enumerate}

\noindent
Here,
\[t=\#\lbrace q\in\mathbb{P}\setminus\lbrace p\rbrace\mid v_q(c)\ge 1,\ q\equiv\left(\frac{d}{q}\right)\pmod{p}\rbrace\]
denotes the number of prime divisors \(q\ne p\) of the integer \(c\) having the following property:
either \(q\equiv +1\pmod{p}\) and \(q\) splits in \(K\)
or \(q\equiv -1\pmod{p}\) and \(q\) remains inert in \(K\).
The contribution of the critical prime \(p\) is also dependent on the decomposition behavior in \(K\):
\[w=
\begin{cases}
0 & \text{ if } v_p(c)=0 \\
  & \text{ or } v_p(c)=1,\ p\nmid d, \\
1 & \text{ if } v_p(c)\ge 2,\ p\nmid d \\
  & \text{ or } v_p(c)\ge 1,\ p\mid d,\ p\ge 5 \\
  & \text{ or } v_p(c)\ge 1,\ p=3,\ d\equiv +3\pmod{9} \\
  & \text{ or } v_p(c)=1,\ p=3,\ d\equiv -3\pmod{9}, \\
2 & \text{ if } v_p(c)\ge 2,\ p=3,\ d\equiv -3\pmod{9}.
\end{cases}
\]
On the other hand, the numbers
\[\tilde t=\#\lbrace q\in\mathbb{P}\setminus\lbrace p\rbrace\mid v_q(c)\ge 1,\ q\equiv +1\pmod{p}\rbrace\]
and
\[\tilde w=
\begin{cases}
0 & \text{ if } v_p(c)\le 1, \\
1 & \text{ if } v_p(c)\ge 2
\end{cases}
\]
are independent of \(K\).

\end{proposition}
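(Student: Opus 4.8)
The plan is to reduce both assertions to local computations at the rational primes dividing \(c\), to settle part (2) first, and to obtain part (1) from it by accounting for the rational units. By the Chinese remainder theorem both prime residue class groups factor as direct products over the prime powers \(q^{v_q(c)}\) exactly dividing \(c\),
\[U(\mathcal{O}/c\mathcal{O})\simeq\prod_{q\mid c}U(\mathcal{O}/q^{v_q(c)}\mathcal{O}),\qquad U(\mathbb{Z}/c\mathbb{Z})\simeq\prod_{q\mid c}U(\mathbb{Z}/q^{v_q(c)}\mathbb{Z}),\]
compatibly with the inclusion \(U(\mathbb{Z}/c\mathbb{Z})\hookrightarrow U(\mathcal{O}/c\mathcal{O})\). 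Since \(A\mapsto A/A^p\) sends direct products to direct sums of \(\mathbb{F}_p\)-vector spaces, the two dimensions sought are sums over \(q\mid c\) of local \(p\)-ranks. For part (2) it suffices to compute \(\dim_{\mathbb{F}_p}\) of the \(p\)-elementary quotient of each local factor \(U(\mathcal{O}/q^{k}\mathcal{O})\), \(k=v_q(c)\). For part (1) I would then show that the rational units inject, i.e.\ that \(U(\mathbb{Z}/c\mathbb{Z})/U(\mathbb{Z}/c\mathbb{Z})^p\) maps injectively into \(U(\mathcal{O}/c\mathcal{O})/U(\mathcal{O}/c\mathcal{O})^p\); granting this, the rank in (1) equals the rank in (2) minus \(\dim_{\mathbb{F}_p}(U(\mathbb{Z}/c\mathbb{Z})/U(\mathbb{Z}/c\mathbb{Z})^p)\), and the latter is visibly \(\tilde t+\tilde w\), leaving \(t+w\).

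For the tame primes \(q\ne p\) the local unit group is the product of the multiplicative group of its residue field with a group of principal units that is a \(q\)-group and hence invisible to the \(p\)-rank. Only the residue factors remain: two copies of \(\mathbb{F}_q^\times\) when \(q\) splits, one copy of \(\mathbb{F}_{q^2}^\times\) when \(q\) is inert, one copy of \(\mathbb{F}_q^\times\) when \(q\) ramifies, against the single diagonal \(\mathbb{F}_q^\times\) supplied by \(\mathbb{Z}\). Counting the \(p\)-divisibility of the orders \(q-1\) and \(q^2-1\) through the three splitting types shows that the full local rank equals the number of factors \(\mathbb{F}_q^\times\) (resp.\ \(\mathbb{F}_{q^2}^\times\)) whose order is divisible by \(p\), which is precisely the indicator of \(q\equiv\left(\frac{d}{q}\right)\pmod p\) plus the indicator of \(q\equiv+1\pmod p\); summing yields the contribution \(t+\tilde t\) to part (2). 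Injectivity of the rational factor is immediate here, since a diagonal residue class is a \(p\)th power in the ambient group exactly when it is one in \(\mathbb{F}_q^\times\).

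The heart of the argument, and the step I expect to resist routine treatment, is the critical prime \(q=p\). There the residue-field units have order prime to \(p\) and contribute nothing, so the entire local \(p\)-rank is carried by the principal units \(1+\mathfrak{p}\), on which the \(p\)th-power map is no longer bijective; their structure must be read off from the filtration of principal units and the exact sequences established in the proof of \cite[Thm. 4.4, p. 51]{Ma1}. When \(p\) is unramified the principal units of \(K\) yield no new \(p\)-elementary direction until \(v_p(c)\ge 2\), giving \(w=0\) for \(v_p(c)\le 1\) and \(w=1\) beyond. When \(p\) ramifies a direction appears already at \(v_p(c)=1\), and for \(p\ge 5\) tameness keeps the count uniformly equal to \(1\); for \(p=3\), however, the wild interplay between the ramification in \(K\) and the cubing map forces the finer split according to \(d\bmod 9\), and the exceptional value \(w=2\) at \(d\equiv-3\pmod 9\) with \(v_p(c)\ge 2\) reflects an extra independent class in the cokernel of cubing on principal units. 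Pinning down this last jump, together with the injectivity of the rational principal units \(1+p\mathbb{Z}\) (which contribute the summand \(\tilde w=1\) exactly when \(v_p(c)\ge 2\)), is the delicate point; once it is settled, assembling the local contributions gives \(t+\tilde t+w+\tilde w\) for part (2) and, after subtracting the rational rank \(\tilde t+\tilde w\), the value \(t+w\) for part (1).
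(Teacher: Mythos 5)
Your overall architecture coincides with the paper's: a Chinese-remainder localization over the prime powers exactly dividing \(c\), the observation that principal units at a tame prime \(q\ne p\) form a \(q\)-group invisible to the \(p\)-rank, and the residue-field count giving the split/inert/ramified contributions that sum to \(t+\tilde t\). Your bookkeeping for part (1) is organized slightly differently but is legitimate: for finite abelian \(B\le A\), injectivity of \(B/B^p\to A/A^p\) does imply \(\dim_{\mathbb{F}_p}\bigl((A/B)/(A/B)^p\bigr)=\dim_{\mathbb{F}_p}(A/A^p)-\dim_{\mathbb{F}_p}(B/B^p)\) (by the Tor exact sequence for \(-\otimes\mathbb{Z}/p\)), and at the tame primes injectivity is checked exactly as you indicate. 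The paper avoids this lemma altogether by computing the rank increments \(\delta_n\) of the quotient \(U(\mathcal{O}/q^e\mathcal{O})\bigm/U(\mathbb{Z}/q^e\mathbb{Z})\) directly from known explicit group structures, so part (1) is read off without any subtraction argument; your route costs an extra injectivity verification but buys a cleaner separation of (1) from (2).

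The genuine gap is at the wild prime \(q=p\), which is the actual content of the proposition. You state the expected values of \(w\) --- in particular the case split among \(p\ge 5\) ramified, \(p=3\) with \(d\equiv +3\pmod{9}\), and \(p=3\) with \(d\equiv -3\pmod{9}\), including the exceptional jump to \(w=2\) --- but you supply no computation, and you explicitly defer \lq\lq pinning down this last jump\rq\rq\ together with the injectivity of \(1+p\mathbb{Z}\). Nothing in your text would actually produce that trichotomy: the filtration of principal units alone does not distinguish \(d\bmod 9\) without a concrete analysis of the cubing map in the wildly ramified case. The paper settles precisely this point by invoking Halter-Koch's explicit determination of \(U(\mathcal{O}/q^n\mathcal{O})\) in \cite{HK}: for \(p\) unramified one has \(p\)-contribution \(C(p^{n-1})^2\), whence \(\Delta_2=2\), \(\delta_2=1\) and \(w=1\) only from \(v_p(c)\ge 2\); for \(p\) ramified, \(C(p^{n-1})\times C(p^n)\) gives \(\delta_1=1\), \(w=1\); and for \(q=p=3\), \(d\equiv -3\pmod 9\), the irregular structure \(U(\mathcal{O}/3^n\mathcal{O})\simeq C(2)\times C(3)\times C(3^{n-1})\times C(3^{n-1})\) forces \(\Delta_1=1\), \(\Delta_2=2\), \(\delta_1=\delta_2=1\), i.e. \(w=2\). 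Note also that injectivity of the rational principal units at \(p\) is not \lq\lq immediate\rq\rq\ in the ramified case --- it amounts to showing \(1+p\) does not become a \(p\)th power in \(U(\mathcal{O}/p^n\mathcal{O})\), which again requires these explicit structures (or the exact sequences of \cite{Ma1} carried out in detail). To complete your proof you must either reproduce such structure theorems or cite \cite{HK} as the paper does; as written, the decisive case analysis for \(w\) is asserted rather than proved.
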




\begin{proof}

For each prime divisor \(q\) of \(c\),
the local contribution of \(q\) to the \(p\)-rank of
\(U(\mathcal{O}/c\mathcal{O})\)
and
\(U(\mathcal{O}/c\mathcal{O})\Bigm/U(\mathbb{Z}/c\mathbb{Z})\)
must be determined.
All these contributions are combined additively
by means of the Chinese remainder theorem,
\(U(\mathcal{O}/\prod_{\mathfrak{Q}\mid c}\mathfrak{Q}^{v_{\mathfrak{Q}}(c)})
\simeq\prod_{\mathfrak{Q}\mid c}U(\mathcal{O}/\mathfrak{Q}^{v_{\mathfrak{Q}}(c)})\),
where the product runs over all prime ideals of \(K\) dividing \(c\).
Let \(C(m)\) denote the cyclic group of order \(m\).
For a prime \(q\ne 2\) and \(n\ge 1\), we generally have
\(U(\mathbb{Z}/q^n\mathbb{Z})\simeq C(q-1)\times C(q^{n-1})\)
with cardinality \(\varphi(q^n)=(q-1)q^{n-1}\)
and \(p\)-contribution
\(C(p^{v_p(q-1)})\) if \(q\equiv +1\pmod{p}\)
and \(C(p^{n-1})\) if \(q=p\).\\
For the structure of \(U(\mathcal{O}/q^n\mathcal{O})\), however,
we must distinguish the decomposition types of \(q\) in \(K\)
and we use
\cite{HK}.
Generally, for a prime ideal \(\mathfrak{Q}\) of \(K\)
the cardinality of \(U(\mathcal{O}/\mathfrak{Q}^n)\) is
\[\# U(\mathcal{O}/\mathfrak{Q}^n)=\varphi_K(\mathfrak{Q}^n)=(\mathrm{Norm}(\mathfrak{Q})-1)\mathrm{Norm}(\mathfrak{Q})^{n-1}\]
in terms of the generalized Euler totient function.
Let \(\Delta_n\), resp. \(\delta_n\), denote the difference between the \(p\)-ranks of two groups
\(U(\mathcal{O}/q^e\mathcal{O})\), resp. \(U(\mathcal{O}/q^e\mathcal{O})\Bigm/U(\mathbb{Z}/q^e\mathbb{Z})\),
modulo powers of \(q\) with subsequent exponents \(e=n\) and \(e=n-1\).

If \(q\) splits in \(K\), that is \(\left(\frac{d}{q}\right)=+1\),
then \(q\mathcal{O}=\mathfrak{Q}\mathfrak{Q}^\prime\) and
\[U(\mathcal{O}/q^n\mathcal{O})
\simeq U(\mathcal{O}/\mathfrak{Q}^n)\times U(\mathcal{O}/{\mathfrak{Q}^\prime}^n)
\simeq C(q-1)\times C(q^{n-1})\times C(q-1)\times C(q^{n-1})\]
with \(p\)-contribution
\(C(p^{v_p(q-1)})^2\), \(\Delta_1=2\), \(\delta_1=1\), if \(q\equiv +1\pmod{p}\)\\
and \(C(p^{n-1})^2\), \(\Delta_2=2\), \(\delta_2=1\), if \(q=p\).

If \(q\) remains inert in \(K\), i. e. \(\left(\frac{d}{q}\right)=-1\),
then \(q\mathcal{O}=\mathfrak{Q}\) and
\[U(\mathcal{O}/q^n\mathcal{O})
\simeq U(\mathcal{O}/\mathfrak{Q}^n)
\simeq C(q^2-1)\times C(q^{n-1})\times C(q^{n-1})\]
with \(p\)-contribution
\(C(p^{v_p(q-1)})\), \(\Delta_1=1\), if \(q\equiv +1\pmod{p}\),\\
however with
\(C(p^{v_p(q+1)})\), \(\Delta_1=1\), \(\delta_1=1\), if \(q\equiv -1\pmod{p}\),\\
and \(C(p^{n-1})^2\), \(\Delta_2=2\), \(\delta_2=1\), if \(q=p\).\\
The unique case where \(q=2\) gives rise to a \(p\)-contribution
occurs for \(p=3\) when \(2\) remains inert in \(K\), that is \(d\equiv 5\pmod{8}\).
Then we have irregular structures
\cite[table row 1, p. 74]{HK}
\[U(\mathbb{Z}/2^n\mathbb{Z})\simeq C(2)\times C(2^{n-2})
\text{ and }
U(\mathcal{O}/2^n\mathcal{O})
\simeq C(3)\times C(2)\times C(2^{n-2})\times C(2^{n-1})\]
without consequences for the usual result
\(\Delta_1=1\), \(\delta_1=1\) for \(2\equiv -1\pmod{3}\).

If \(q\) ramifies in \(K\), that is \(q\mid d\),
then \(q\mathcal{O}=\mathfrak{Q}^2\) and
\[U(\mathcal{O}/q^n\mathcal{O})
\simeq U(\mathcal{O}/\mathfrak{Q}^{2n})
\simeq C(q-1)\times C(q^{n-1})\times C(q^n)\]
with \(p\)-contribution
\(C(p^{v_p(q-1)})\), \(\Delta_1=1\), if \(q\equiv +1\pmod{p}\)\\
and \(C(p^{n-1})\times C(p^n)\), \(\Delta_1=1\), \(\Delta_2=1\), \(\delta_1=1\), if \(q=p\).\\
For \(q=3\) and \(d\equiv -3\pmod{9}\), we have an irregular behavior
\cite[table row 2a, p. 77]{HK}
\[U(\mathcal{O}/3^n\mathcal{O})
\simeq C(2)\times C(3)\times C(3^{n-1})\times C(3^{n-1})\]
causing the exception of two successive rank increments
\(\Delta_1=1\), \(\Delta_2=2\), \(\delta_1=1\), \(\delta_2=1\), if \(p=3\).

The following Table
\ref{tbl:DecompositionTypes}
summarizes details of the various cases.

\renewcommand{\arraystretch}{1.0}

\begin{table}[ht]
\caption{Prime decomposition types}
\label{tbl:DecompositionTypes}
\begin{center}
\begin{tabular}{|c||c|c|c|l|l|c|}
\hline
 prime \(q\) & \(\left(\frac{d}{q}\right)\) & \(q\mathcal{O}\)                    &              case & \multicolumn{2}{|c|}{rank increments}                     \\
             &                              &                                     &                   & \multicolumn{2}{|c|}{\downbracefill}                      \\
\hline
 split       &                       \(+1\) & \(\mathfrak{Q}\mathfrak{Q}^\prime\) & \(q\equiv +1(p)\) &        \(\Delta_1=2\)       &        \(\delta_1=1\)       \\
             &                              &                                     &           \(q=p\) &        \(\Delta_2=2\)       &        \(\delta_2=1\)       \\
 inert       &                       \(-1\) & \(\mathfrak{Q}\)                    & \(q\equiv +1(p)\) &        \(\Delta_1=1\)       &                             \\
             &                              &                                     & \(q\equiv -1(p)\) &        \(\Delta_1=1\)       &        \(\delta_1=1\)       \\
             &                              &                                     &           \(q=p\) &        \(\Delta_2=2\)       &        \(\delta_2=1\)       \\
 ramified    &                        \(0\) & \(\mathfrak{Q}^2\)                  & \(q\equiv +1(p)\) &        \(\Delta_1=1\)       &                             \\
             &                              &                                     &           \(q=p\) & \(\Delta_1=1,\ \Delta_2=1\) &        \(\delta_1=1\)       \\
             & \(d\equiv -3(9)\)            &                                     &         \(q=p=3\) & \(\Delta_1=1,\ \Delta_2=2\) & \(\delta_1=1,\ \delta_2=1\) \\
\hline
\end{tabular}
\end{center}
\end{table}

All the other rank increments \(\Delta_n\) and \(\delta_n\),
which have not been mentioned explicitly, are equal to zero.
Cases with \(\delta_n>0\) contribute to the counters \(t\) and \(w\).
Cases where \(\Delta_n>\delta_n\) contribute to the counters \(\tilde{t}\) and \(\tilde{w}\).

\end{proof}



Since, for any prime \(q\),
increasing the valuation \(v_q(c)\) beyond a certain threshold,
in particular beyond \(1\) for \(q\ne p\),
does not cause a further increment of the \(p\)-rank of \(K(f)/R_c\),
it is sufficient to consider \textit{nearly squarefree conductors} \(c\).

\begin{definition}
\label{dfn:AdmCnd}

\(c\) is called a \(p\)-\textit{admissible conductor} over \(K\) if
\[c=p^eq_1\cdots q_t\]
with \(t\ge 0\), pairwise distinct primes \(q_1,\ldots,q_t\in\mathbb{P}\setminus\lbrace p\rbrace\)
such that \(q_i\equiv\left(\frac{d}{q_i}\right)\pmod{p}\), and
\[e\in
\begin{cases}
\lbrace 0,2\rbrace   & \text{ if } \left(\frac{d}{p}\right)=\pm 1, \\
\lbrace 0,1\rbrace   & \text{ if } p\ge 5,\ p\mid d \\
                     & \text{ or } p=3,\ d\equiv +3\pmod{9}, \\
\lbrace 0,1,2\rbrace & \text{ if } p=3,\ d\equiv -3\pmod{9}.
\end{cases}
\]
Formally, we write \(c=q_1\cdots q_{\tau}\),
where \(\tau=t\) if \(e=0\)
and \(\tau=t+1\), \(q_{t+1}=p^e\) if \(e\ge 1\).

\end{definition}

Taking the \(p\)-ranks of both sides of equation (2.3.1)
and observing Proposition
\ref{prp:PrimResCl}(1)
gives the relation
\[(2.3.2) \qquad t+w=\Delta\varrho_{c,p}+\delta_p(c),\]
where
\(\delta_p(c)=\dim_{\mathbb{F}_p}(I_p(f)/I_p(f)\cap R_cK(f)^p)\)
is called the \(p\)-\textit{defect} of \(c\) with respect to \(K\),
similarly to
\cite[Thm. 3.1, p. 836]{Ma3}.
Consequently, \(t+w\) is the \textit{maximal possible increment} \(\Delta\varrho_{c,p}\)
of the \(p\)-ring class rank modulo \(c\)
with respect to the ordinary \(p\)-class rank of \(K\).
It is attained if and only if the \(p\)-defect \(\delta_p(c)\) vanishes.


On the other hand,
the \(p\)-defect \(\delta_p(c)\) of a \(p\)-admissible conductor \(c\) over \(K\)
is bounded from above by an estimate in terms of the prime decomposition of \(c\),
\[(2.3.3) \qquad \delta_p(c)\le t+w \qquad \textbf{(first inequality)}.\]



\subsection{\(p\)-ring spaces}
\label{ss:RingSpace}

The following isomorphisms establish a reinterpretation of the \(p\)-defect \(\delta_p(c)\) of \(c\)
which is of the greatest importance in the remaining part of the paper.
\[R_cK(f)^pI_p(f)/R_cK(f)^p\simeq I_p(f)/I_p(f)\cap R_cK(f)^p\simeq\Bigl\lbrack I_p(f)/K(f)^p\Bigr\rbrack\Bigm/\Bigl\lbrack I_p(f)\cap R_cK(f)^p/K(f)^p\Bigr\rbrack\]

\begin{definition}
\label{dfn:RingSpace}

The vector space
\(V_p=I_p/K^p=I_p(f)/K(f)^p\)
over the finite field \(\mathbb{F}_p\)
is independent of the choice of the modulus \(f\ge 1\) of definition.
It is called vector space of \textit{non-trivial generators of \(p\)th powers of ideals} of \(K\).
The subspace
\(V_p(c)=I_p(f)\cap R_cK(f)^p/K(f)^p\)
with a fixed integer \(c\ge 1\)
is independent of the choice of an integer \(f\) divisible by \(c\) as modulus of definition.
It is called the \(p\)-\textit{ring space} modulo \(c\) of \(K\)
and its codimension in \(V_p\) is exactly the \(p\)-defect of \(c\),
\[\mathrm{codim}(V_p(c))=\dim_{\mathbb{F}_p}(V_p/V_p(c))=\delta_p(c).\]

\end{definition}

\begin{remark}

The group \(I_p\), resp. \(I_p/K^p\),
is called the group of \(p\)-\textit{virtual units},
resp. the \(p\)-\textit{Selmer group}, of \(K\)
in
\cite[Dfn. 5.2.4, p. 231]{Co}.

\end{remark}



\subsection{Modified \(p\)-class rank}
\label{ss:ModifClRk}

\(p\)-defects \(\delta_p(c)\) of \(p\)-admissible conductors \(c\) over \(K\)
are bounded from above by yet another estimate involving an invariant \(\sigma_p\) of the quadratic field \(K\).
\[(2.5.1) \qquad \delta_p(c)\le\sigma_p \qquad \textbf{(second inequality)}.\]

This is due to the fact that the mapping
\(I_p\to\mathcal{I}_p/\mathcal{P}\), \(\alpha\mapsto j\),
where \(\alpha\mathcal{O}=j^p\),
is a well-defined epimorphism with kernel \(UK^p\),
which yields an isomorphism
\[\mathcal{C}_p=\mathcal{I}_p/\mathcal{P}\simeq I_p/UK^p\simeq\Bigl\lbrack I_p/K^p\Bigr\rbrack\Bigm/\Bigl\lbrack UK^p/K^p\Bigr\rbrack,\]
where \(UK^p/K^p\simeq U/U\cap K^p=U/U^p\) is the \(p\)-elementary unit group, and thus the vector space
\[V_p\simeq\mathcal{C}_p\times(U/U^p)\]
is isomorphic to the direct product of the \(p\)-elementary class group and unit group of \(K\).

\begin{definition}
\label{dfn:ModifClRk}

The dimension
\[(2.5.2) \qquad \sigma_p=\varrho_p+\dim_{\mathbb{F}_p}(U/U^p)=
\begin{cases}
\varrho_p   & \text{ if } d<-3 \\
            & \text{ or } d=-3,\ p\ge 5, \\
\varrho_p+1 & \text{ if } d>0 \\
            & \text{ or } d=-3,\ p=3
\end{cases}
\]
of the vector space \(V_p\) over \(\mathbb{F}_p\)
is called the \textit{modified \(p\)-class rank} of \(K\).

\end{definition}



Figure
\ref{fig:Defect}
visualizes the role of the \(p\)-defect \(\delta_p(c)\) of \(c\)
as the common part,
up to isomorphism,
of the \(\mathbb{F}_p\)-vector spaces
\(V_p=I_p(f)/K(f)^p\) and \(K(f)/R_cK(f)^p\)
within the radicand group \(K(f)/K(f)^p\) of Kummer theory.

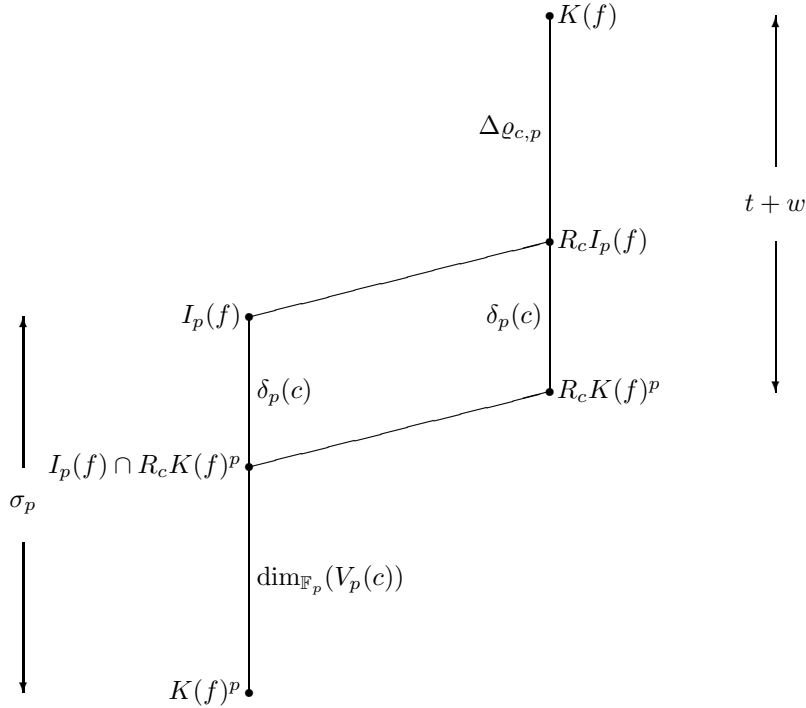
\begin{figure}[ht]
\caption{\(\delta_p(c)\) as a link between \(V_p\) and \(K(f)/R_cK(f)^p\)}
\label{fig:Defect}
\setlength{\unitlength}{1cm}
\begin{picture}(11,10)(-3,0)

\put(-3,3){\vector(0,1){2}}
\put(-3,2.5){\makebox(0,0)[cc]{\(\sigma_p\)}}
\put(-3,2){\vector(0,-1){2}}
\put(0,0){\circle*{0.1}}
\put(-0.1,0){\makebox(0,0)[rc]{\(K(f)^p\)}}
\put(0,0){\line(0,1){5}}
\put(0.1,1.5){\makebox(0,0)[lc]{\(\dim_{\mathbb{F}_p}(V_p(c))\)}}
\put(0,3){\circle*{0.1}}
\put(-0.1,3){\makebox(0,0)[rc]{\(I_p(f)\cap R_cK(f)^p\)}}
\put(0.1,4){\makebox(0,0)[lc]{\(\delta_p(c)\)}}
\put(0,5){\circle*{0.1}}
\put(-0.1,5){\makebox(0,0)[rc]{\(I_p(f)\)}}
\put(0,3){\line(4,1){4}}
\put(0,5){\line(4,1){4}}
\put(4,9){\circle*{0.1}}
\put(4.1,9){\makebox(0,0)[lc]{\(K(f)\)}}
\put(4,9){\line(0,-1){5}}
\put(3.9,7.5){\makebox(0,0)[rc]{\(\Delta\varrho_{c,p}\)}}
\put(4,6){\circle*{0.1}}
\put(4.1,6){\makebox(0,0)[lc]{\(R_cI_p(f)\)}}
\put(3.9,5){\makebox(0,0)[rc]{\(\delta_p(c)\)}}
\put(4,4){\circle*{0.1}}
\put(4.1,4){\makebox(0,0)[lc]{\(R_cK(f)^p\)}}
\put(7,7){\vector(0,1){2}}
\put(7,6.5){\makebox(0,0)[cc]{\(t+w\)}}
\put(7,6){\vector(0,-1){2}}

\end{picture}
\end{figure}



\subsection{\(p\)-ring class rank}
\label{ss:RingClRank}

\begin{theorem}
\label{prp:RingClRank}

Let \(K\) be a quadratic number field
having \(p\)-class rank \(\varrho_p\)
and modified \(p\)-class rank \(\sigma_p\),
\(p\ge 3\) an odd prime,
and \(c,f\) positive integers such that \(c\mid f\).

\begin{enumerate}

\item
The \(p\)-ring class rank modulo \(c\) is given by
\[\varrho_{c,p}=\dim_{\mathbb{F}_p}(\mathcal{I}(f)/\mathcal{R}_c\mathcal{I}(f)^p)=\varrho_p+t+w-\delta_p(c),\]
where \(\delta_p(c)\le\min\lbrace t+w,\sigma_p\rbrace\).

\item
The \(p\)-ray class rank modulo \(c\) (for general ray class groups see
\cite[p. 163]{Co})
is given by
\[\tilde\varrho_{c,p}=\dim_{\mathbb{F}_p}(\mathcal{I}(f)/\mathcal{S}_c\mathcal{I}(f)^p)=\varrho_p+t+\tilde t+w+\tilde w-\tilde\delta_p(c),\]
where \(\mathcal{S}_c=\lbrace\alpha\mathcal{O}\mid\alpha\in S_c\rbrace\subset\mathcal{R}_c\)
and \(\tilde\delta_p(c)\le\min\lbrace t+\tilde t+w+\tilde w,\sigma_p\rbrace\).

\end{enumerate}

\noindent
The numbers \(t\), \(\tilde t\), \(w\), \(\tilde w\) are defined as in Proposition
\ref{prp:PrimResCl}.
The \(p\)-\textit{ray defect} modulo \(c\) is defined by
\(\tilde\delta_p(c)=\dim_{\mathbb{F}_p}(I_p(f)/I_p(f)\cap S_cK(f)^p)\)
and satisfies
\(\delta_p(c)\le\tilde\delta_p(c)\).

\end{theorem}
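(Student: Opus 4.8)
The plan is to treat part (1) as an immediate bookkeeping consequence of the machinery already assembled in Subsections \ref{ss:RingClGrp}--\ref{ss:ModifClRk}, and then to run the ray-theoretic analogue of that machinery for part (2). For part (1), I would substitute the defect relation (2.3.2), rewritten as \(\Delta\varrho_{c,p}=t+w-\delta_p(c)\), into the rank decomposition (2.2.1), \(\varrho_{c,p}=\varrho_p+\Delta\varrho_{c,p}\); this yields the claimed formula \(\varrho_{c,p}=\varrho_p+t+w-\delta_p(c)\) at once. The bound \(\delta_p(c)\le\min\lbrace t+w,\sigma_p\rbrace\) then follows directly by combining the first inequality (2.3.3) with the second inequality (2.5.1), so no new argument is needed here.

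For part (2), I would reproduce the chain of isomorphisms of Subsection \ref{ss:PrncIdPwr} with the ray \(\mathcal{S}_c\) (generated by \(S_c\)) in place of the ring \(\mathcal{R}_c\) (generated by \(R_c=\mathbb{Q}(c)S_c\)). Concretely, the transition \(\alpha\mapsto\alpha\mathcal{O}\) induces an epimorphism \(K(f)\to\mathcal{P}(f)/\mathcal{P}(f)\cap\mathcal{S}_c\mathcal{I}(f)^p\) whose kernel is \(S_cI_p(f)U=S_cI_p(f)\), the last equality using \(U\subseteq I_p\); this gives the direct product relation analogous to (2.3.1),
\[
K(f)/S_cK(f)^p\simeq\Bigl\lbrack K(f)/S_cI_p(f)\Bigr\rbrack\times\Bigl\lbrack S_cI_p(f)/S_cK(f)^p\Bigr\rbrack.
\]
Taking \(\mathbb{F}_p\)-dimensions and invoking Proposition \ref{prp:PrimResCl}(2) for the left-hand side produces the ray counterpart of (2.3.2), namely \(t+\tilde t+w+\tilde w=\Delta\tilde\varrho_{c,p}+\tilde\delta_p(c)\) with \(\tilde\delta_p(c)=\dim_{\mathbb{F}_p}(I_p(f)/I_p(f)\cap S_cK(f)^p)\). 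Feeding this into the ray analogue \(\tilde\varrho_{c,p}=\varrho_p+\Delta\tilde\varrho_{c,p}\) of (2.2.1) delivers the stated formula for \(\tilde\varrho_{c,p}\).

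The two bounds for \(\tilde\delta_p(c)\) and the comparison \(\delta_p(c)\le\tilde\delta_p(c)\) I would obtain from the subspace picture of Definition \ref{dfn:RingSpace}. Setting \(\tilde V_p(c)=(I_p(f)\cap S_cK(f)^p)/K(f)^p\), one has \(\tilde\delta_p(c)=\mathrm{codim}(\tilde V_p(c))\) inside \(V_p\), so \(\tilde\delta_p(c)\le\dim_{\mathbb{F}_p}(V_p)=\sigma_p\) is automatic, while \(\tilde\delta_p(c)\le t+\tilde t+w+\tilde w\) is the ray version of the first inequality read off from the ray defect relation just established; together these give \(\tilde\delta_p(c)\le\min\lbrace t+\tilde t+w+\tilde w,\sigma_p\rbrace\). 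For the final comparison, the inclusion \(S_c\subseteq R_c\) forces \(S_cK(f)^p\subseteq R_cK(f)^p\), hence \(\tilde V_p(c)\subseteq V_p(c)\), and passing to codimensions reverses the inclusion to yield \(\delta_p(c)\le\tilde\delta_p(c)\).

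The only genuinely delicate point I anticipate is the kernel computation \(S_cI_p(f)U=S_cI_p(f)\) together with the exactness of the ray analogue of the local congruence description underlying Proposition \ref{prp:PrimResCl}(2): one must verify that replacing \(R_c\) by \(S_c\) changes the residue-class quotient from \(U(\mathcal{O}/c\mathcal{O})/U(\mathbb{Z}/c\mathbb{Z})\) to the full \(U(\mathcal{O}/c\mathcal{O})\), which is precisely what accounts for the extra contributions \(\tilde t\) and \(\tilde w\). Everything else is a routine transcription of the ring-theoretic argument with \(\mathcal{S}_c\) in place of \(\mathcal{R}_c\).
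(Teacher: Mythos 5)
Your proposal is correct and takes essentially the same route as the paper: part (1) is exactly the paper's one-line combination of equations (2.2.1) and (2.3.2), and part (2) is the ray-theoretic transcription of Subsection \ref{ss:PrncIdPwr} via Proposition \ref{prp:PrimResCl}(2), which is all the paper's proof invokes. Your additional details --- the kernel computation \(S_cI_p(f)U=S_cI_p(f)\) using \(U\subseteq I_p\), the bounds from (2.3.3) and (2.5.1), and the inclusion \(\tilde V_p(c)\subseteq V_p(c)\) giving \(\delta_p(c)\le\tilde\delta_p(c)\) --- simply make explicit what the paper leaves implicit.
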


\begin{proof}

Combining equations (2.2.1) and (2.3.2), we get
\(\varrho_{c,p}=\varrho_p+\Delta\varrho_{c,p}=\varrho_p+t+w-\delta_p(c)\).
The formula for the \(p\)-ray class rank \(\tilde\varrho_{c,p}\) modulo \(c\)
is a consequence of Proposition
\ref{prp:PrimResCl}(2).

\end{proof}



\section{Counting dihedral fields}
\label{s:CntDihFld}

To point out the significance of our main theorems in section
\ref{s:MainThm},
we first recall the formulas for \(p\)-multiplicities \(m_p(d,c)\)
which were known up to now
\cite{Ha,Ma3}.
It is convenient to distinguish
several types of conductors \(c\),
according to the complexity of the corresponding formulas.

\begin{definition}
\label{dfn:TypesOfCnd}

With respect to the critical prime \(p\),
a \(p\)-admissible conductor \(c=p^eq_1\cdots q_t\) over \(K=\mathbb{Q}(\sqrt{d})\)
is called \textit{irregular},
if \(p=3\), \(e=2\), and \(d\equiv -3\pmod{9}\).
Otherwise, \(c\) is called \textit{regular}.
With respect to its \(p\)-defect,
a \(p\)-admissible conductor \(c\)
is called \textit{free} if \(\delta_p(c)=0\)
(called \lq good\rq\ or \lq nice\rq\ in
\cite{Ma3})
and is called \textit{restrictive} if \(\delta_p(c)\ge 1\)
(called \lq bad\rq\ in
\cite{Ma3}).

\end{definition}

\begin{remark}

For irregular conductors \(c=3^2q_1\cdots q_t\),
we have to distinguish carefully the cases
\(\delta_3(3)=0\), resp. \(\delta_3(3)=1\),
of a \textit{free}, resp. \textit{restrictive}, \textit{prime divisor} \(3\mid c\),
which give rise to completely different multiplicity formulas.

\end{remark}



\subsection{Unramified extensions}
\label{ss:UnRmf}

The first formula (3.1.1) gives
the number of all \textit{unramified} dihedral fields \(N\) of relative degree \(p\) over \(K\)
with conductor \(c=1\)
and generalizes Hasse's formula
\cite[I, p. 581]{Ha}.



\begin{theorem}
\label{thm:UnRmf}

Let \(p\ge 3\) be an odd prime
and \(K=\mathbb{Q}(\sqrt{d})\) be a quadratic number field
with ordinary \(p\)-class rank \(\varrho=\varrho_p\ge 1\)
and discriminant \(d\).
Then \(c=1\) is a free regular \(p\)-admissible conductor over \(K\)
with \(p\)-defect \(\delta_p(1)=0\)
and the \(p\)-multiplicity of \(c=1\) with respect to \(d\) is given by
\[(3.1.1)\qquad m_p(d,1)=\frac{p^{\varrho}-1}{p-1}.\]

\end{theorem}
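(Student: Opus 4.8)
The plan is to split the statement into its two assertions and dispatch them in turn. For the first assertion, that \(c=1\) is a free regular \(p\)-admissible conductor with \(\delta_p(1)=0\), I would simply unwind the definitions. Writing \(c=1=p^0\) exhibits \(c\) with \(e=0\) and \(t=0\), and since \(e=0\) is permitted in every branch of Definition~\ref{dfn:AdmCnd}, the conductor is \(p\)-admissible; it is regular because irregularity requires \(e=2\). For the defect, note that \(v_p(1)=0\) forces \(t=0\) and \(w=0\) in Proposition~\ref{prp:PrimResCl}, so the first inequality (2.3.3) gives \(0\le\delta_p(1)\le t+w=0\), whence \(\delta_p(1)=0\) and \(c=1\) is free. (Equivalently, the congruence modulo \(1\) is vacuous, so \(R_1K(f)^p=K(f)\) and \(V_p(1)=V_p\) has codimension zero by Definition~\ref{dfn:RingSpace}.)

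The substance of the theorem is the multiplicity formula, and here the strategy is to translate the counting problem into class field theory over \(K\). Because \(c=1\), the relevant \(p\)-ring class field is the maximal unramified elementary abelian \(p\)-extension of \(K\), whose Galois group over \(K\) is canonically \(\mathcal{C}_p=\mathcal{C}/\mathcal{C}^p\simeq\mathbb{F}_p^{\varrho}\). The cyclic degree-\(p\) extensions \(N\vert K\) that are unramified with conductor \(c=1\) correspond bijectively, via the Galois correspondence, to the index-\(p\) subgroups \(H\le\mathcal{C}_p\), with \(N\) the fixed field of \(H\) and \(\mathrm{Gal}(N\vert K)\simeq\mathcal{C}_p/H\) cyclic of order \(p\). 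It remains to count these subgroups and to confirm that each produces a genuinely dihedral field.

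The key point---and the step I expect to be the real obstacle---is verifying that every such \(N\) has absolute Galois group the dihedral group of order \(2p\) rather than the cyclic group of the same order. This hinges on the action of the nontrivial automorphism \(\tau\in\mathrm{Gal}(K\vert\mathbb{Q})\) on the class group: for any ideal \(\mathfrak{a}\) one has \(\mathfrak{a}\,\mathfrak{a}^{\tau}=\mathrm{Norm}(\mathfrak{a})\mathcal{O}\), which is principal, so \([\mathfrak{a}]^{\tau}=[\mathfrak{a}]^{-1}\) and \(\tau\) acts as \(-1\) on all of \(\mathcal{C}\), hence on \(\mathcal{C}_p\) and on each quotient \(\mathrm{Gal}(N\vert K)\). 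Since \(p\) is odd, \(-1\not\equiv 1\pmod{p}\), so \(\tau\) acts by a nontrivial inversion; the extension of \(\mathbb{Z}/p\mathbb{Z}\) by \(\mathbb{Z}/2\mathbb{Z}\) realized by \(\mathrm{Gal}(N\vert\mathbb{Q})\) is therefore non-abelian and dihedral, exactly as required, and no cyclic group of order \(2p\) can occur.

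Finally I would carry out the combinatorial count. The index-\(p\) subgroups of \(\mathbb{F}_p^{\varrho}\) are precisely the hyperplanes, equivalently (by \(\mathbb{F}_p\)-duality) the one-dimensional subspaces, of which there are \((p^{\varrho}-1)/(p-1)\). Since distinct subgroups \(H\) yield distinct fixed fields \(N\), and since every unramified dihedral \(N\vert K\) of relative degree \(p\) arises in this way, this count is exactly the \(p\)-multiplicity, giving \(m_p(d,1)=(p^{\varrho}-1)/(p-1)\) and completing the proof.
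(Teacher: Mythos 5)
Your proposal is correct, and it is more than the paper itself offers: the paper's proof of this theorem is a one-line citation to \cite[Cor.\ 3.1, p.\ 838]{Ma3}, so you have reconstructed, rather than paralleled, the argument. What you wrote is exactly the classical class-field-theoretic proof underlying that citation (and Hasse's formula I): the admissibility and defect claims follow by unwinding Definition~\ref{dfn:AdmCnd} and the first inequality (2.3.3) with \(t=w=0\), precisely as you say; the unramified cyclic degree-\(p\) extensions \(N\vert K\) correspond to index-\(p\) subgroups of \(\mathcal{C}_p\simeq\mathbb{F}_p^{\varrho}\) inside the Hilbert \(p\)-class field; and the count of hyperplanes of \(\mathbb{F}_p^{\varrho}\) gives \((p^{\varrho}-1)/(p-1)\). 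You also correctly identified and handled the one genuinely delicate point, namely that each \(N\) is dihedral rather than cyclic over \(\mathbb{Q}\): the relation \(\mathfrak{a}\,\mathfrak{a}^{\tau}=(\mathrm{Norm}(\mathfrak{a}))\) shows \(\tau\) acts as \(-1\) on \(\mathcal{C}\), and since \(p\) is odd this inversion is nontrivial, forcing \(\mathrm{Gal}(N\vert\mathbb{Q})\simeq D_p\). Two small points you use implicitly and could state: (i) because \(\tau\) acts as inversion, \emph{every} subgroup of \(\mathcal{C}\) is \(\tau\)-stable, which is what guarantees that each \(N\) is normal over \(\mathbb{Q}\) in the first place, so that \(\mathrm{Gal}(N\vert\mathbb{Q})\) is defined; and (ii) for real \(K\) the infinite places cannot ramify in an odd-degree extension, so conductor \(1\) indeed means unramified everywhere. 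Neither is a gap in substance; the proof stands as written.
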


\begin{proof}

See \cite[Cor. 3.1, p. 838]{Ma3}.

\end{proof}



\begin{example}
\label{xpl:UnRmf}

In \textit{statistical} algebraic number theory
we are interested in solving two central problems.
Firstly, to find the minimal absolute value of the discriminant
of number fields with a certain property.
Secondly, to determine the exact number of these fields
up to a given bound for the absolute value of the discriminant.
For asymptotic results, see
\cite[\S\ 6, p. 83]{Kl}.

\renewcommand{\arraystretch}{1.0}

\begin{table}[ht]
\caption{Minimal discriminants of quadratic fields with various \(p\)-class ranks}
\label{tbl:UnRmfMin}
\begin{center}
\begin{tabular}{|c||r|r|r|r|r|r|}
\hline
 \(\varrho_p\) & \multicolumn{3}{|c|}{\(p=3\)}                                     & \multicolumn{3}{|c|}{\(p=5\)}                        \\
               & \multicolumn{3}{|c|}{\downbracefill}                              & \multicolumn{3}{|c|}{\downbracefill}                 \\
               & \(m_3(d,1)\) &            \(d<0\)         &         \(d>0\)       & \(m_5(d,1)\) &       \(d<0\)     &       \(d>0\)     \\
\hline
 \(0\)         &        \(0\) &             \(-3\)         &           \(5\)       &        \(0\) &        \(-3\)     &         \(5\)     \\
 \(1\)         &        \(1\) &            \(-23\)         &         \(229\)       &        \(1\) &       \(-47\)     &       \(401\)     \\
 \(2\)         &        \(4\) &          \(-3\,299\)       &       \(32\,099\)     &        \(6\) &    \(-11\,199\)   &    \(244\,641\)   \\
 \(3\)         &       \(13\) &       \(-3\,321\,607\)     &    \(39\,345\,017\)   &       \(31\) & \(-11\,203\,620\) & \(999\,790\,597\) \\
 \(4\)         &       \(40\) &     \(-653\,329\,427\)     & \(58\,343\,207\,081\) &      \(156\) &                   &                   \\
 \(5\)         &      \(121\) & \(-5\,393\,946\,914\,743\) &                       &      \(781\) &                   &                   \\
\hline
\end{tabular}
\end{center}
\end{table}

\begin{enumerate}

\item
Discriminants \(d\) with smallest absolute value
of quadratic fields \(K\) having a fixed \(p\)-class rank \(0\le \varrho_p\le 3\)
are given for \(p\in\lbrace 3,5\rbrace\) in Table
\ref{tbl:UnRmfMin}
together with the number \(m_p(d,1)\)
of unramified cyclic extensions \(N\vert K\) of relative degree \(p\).
Information about \(\varrho_p\ge 3\) has been taken from
\cite[Tbl. 6, p. 90]{Bu},
\cite{Be3},
\cite[Tbl. 7 and 9, pp. 472--473]{Js},
and
\cite{JRW}.

\renewcommand{\arraystretch}{1.0}

\begin{table}[ht]
\caption{Frequency of quadratic fields with various \(p\)-class ranks for \(-10^6<d<0\)}
\label{tbl:UnRmfCnt}
\begin{center}
\begin{tabular}{|c||c|r|c|r|c|r|}
\hline
\(\varrho_p\) & \multicolumn{2}{|c|}{\(p=3\)}        & \multicolumn{2}{|c|}{\(p=5\)}        & \multicolumn{2}{|c|}{\(p=7\)}        \\
              & \multicolumn{2}{|c|}{\downbracefill} & \multicolumn{2}{|c|}{\downbracefill} & \multicolumn{2}{|c|}{\downbracefill} \\
              & \(m_3(d,1)\) & \(\#\)                & \(m_5(d,1)\) & \(\#\)                & \(m_7(d,1)\) & \(\#\)                \\
\hline
 \(0\)        &        \(0\) & \(182\,323\)          &        \(0\) & \(234\,205\)          &        \(0\) & \(256\,276\)          \\
 \(1\)        &        \(1\) & \(118\,455\)          &        \(1\) &  \(69\,365\)          &        \(1\) &  \(47\,595\)          \\
 \(2\)        &        \(4\) &   \(3\,190\)          &        \(6\) &    \(398\)            &        \(8\) &     \(97\)            \\
\hline
\end{tabular}
\end{center}
\end{table}

\item
Exact counts
of complex quadratic fields \(K\)
with discriminant \(-10^6<d<0\)
having a fixed \(p\)-class rank \(0\le \varrho_p\le 2\)
are given for \(p\in\lbrace 3,5,7\rbrace\) in Table
\ref{tbl:UnRmfCnt}
together with the number \(m=m_p(d,1)\)
of members of multiplets \((N_i)_{1\le i\le m}\)
of unramified cyclic extensions \(N_i\vert K\) of degree \(p\)
\cite[p. 77]{Ma},
\cite[p. 843 ff]{Ma3}.
They will be used further in Example
\ref{xpl:Codim0}.

\end{enumerate}

\end{example}



\subsection{Free conductors}
\label{ss:FreeCond}

The following formula (3.2.1) of codimension \(0\)
enumerates the ramified dihedral fields \(N\) of relative degree \(p\) over \(K\)
with an arbitrary \textit{free} conductor \(c>1\)
and simultaneously generalizes the two formulas 
\cite[II, p. 582]{Ha}
given by Hasse.



\begin{theorem}
\label{thm:Codim0}

Let \(p\ge 3\) be an odd prime
and \(K=\mathbb{Q}(\sqrt{d})\) be a quadratic number field
with arbitrary modified \(p\)-class rank \(\sigma_p\ge 0\)
and discriminant \(d\).
Assume that \(c=p^eq_1\cdots q_t\) is a free \(p\)-admissible conductor over \(K\)
with \(p\)-defect \(\delta_p(c)=0\).
Then \(c\) is either regular or irregular
with \(p=3\), \(d\equiv -3\pmod{9}\), \(e=2\), and free prime divisor \(3\), \(\delta_3(3)=0\),
and the \(p\)-multiplicity of \(c\) with respect to \(d\) is given by
\[(3.2.1)\qquad m_p(d,c)=p^{\varrho+\omega}(p-1)^{\tau-1},\]
where \(\varrho=\varrho_p\) denotes the ordinary \(p\)-class rank of \(K\),
\(\omega=0\) if \(c\) is regular,
and \(\omega=1\) otherwise.

\end{theorem}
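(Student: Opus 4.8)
The plan is to translate the enumeration of dihedral fields into a character count on the \(p\)-elementary ring class group \(\mathcal{C}_{c,p}=\mathcal{I}(f)/\mathcal{R}_c\mathcal{I}(f)^p\), whose dimension is already pinned down by freeness. First I would record that every cyclic degree-\(p\) extension \(N\vert K\) with conductor dividing \(c\) corresponds to an index-\(p\) subgroup of \(\mathcal{C}_{c,p}\), equivalently to a surjective character \(\chi\colon\mathcal{C}_{c,p}\to\mathbb{F}_p\) taken up to the action of \(\mathbb{F}_p^\times\). The decisive point making \emph{all} of these dihedral (rather than cyclic of order \(2p\)) is that the nontrivial automorphism \(\tau\) of \(K\vert\mathbb{Q}\) acts by inversion on \(\mathcal{C}_{c,p}\): for an ideal \(\mathfrak a\) coprime to \(f\) one has \(\mathfrak a\,\mathfrak a^\tau=\mathrm{Norm}(\mathfrak a)\mathcal O\), a rational ideal lying in \(\mathcal R_c\), so \([\mathfrak a]^\tau=[\mathfrak a]^{-1}\) in \(\mathcal{C}_{c,p}\); since \(p\) is odd we have \(-1\neq+1\), and every order-\(p\) quotient carries the dihedral Galois action. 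Hence \(m_p(d,c)\) equals the number of such characters of conductor \emph{exactly} \(c\), divided by \(p-1\) (the number of characters sharing a fixed kernel, i.e.\ a fixed field).

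Second, I would fix the ambient dimension and its local decomposition. By Theorem \ref{prp:RingClRank}(1) with \(\delta_p(c)=0\) one gets \(\dim_{\mathbb{F}_p}\mathcal{C}_{c,p}=\varrho+t+w\), and freeness is precisely the statement, via (2.3.2) and the first inequality (2.3.3), that the increment \(\Delta\varrho_{c,p}=t+w\) is attained. This forces the natural map from the direct sum of the local contributions (one per prime dividing \(c\), organised by the Chinese remainder theorem in the proof of Proposition \ref{prp:PrimResCl}) into \(\mathcal{C}_{c,p}\) to be injective, yielding a vector-space decomposition \(\mathcal{C}_{c,p}\cong\mathcal{C}_p\oplus\bigoplus_{i=1}^{t}L_{q_i}\oplus L_p\), where \(\dim_{\mathbb{F}_p}\mathcal{C}_p=\varrho\), each \(L_{q_i}\) is a line (the \(\delta_1=1\) contribution of an admissible prime \(q_i\)), and \(L_p\) is the contribution of the prime power, of dimension \(w\): namely \(0\) if \(e=0\), \(1\) in the regular case with \(e\ge1\), and \(2\) in the irregular case \(p=3\), \(e=2\), \(d\equiv-3\pmod 9\). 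The conductor filtration at \(p\) refines \(L_p\) compatibly with the intermediate modulus \(p^1\).

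Third, I would read off \lq\lq conductor exactly \(c\)\rq\rq\ as ramification at every prime dividing \(c\), i.e.\ nontriviality of \(\chi\) on each local summand, the class-group direction \(\mathcal{C}_p\) remaining unconstrained. The count then factors over the decomposition: \(p^\varrho\) choices on \(\mathcal{C}_p\); for each \(q_i\) the restriction to the line \(L_{q_i}\) must be nonzero, contributing \(p-1\); and at \(p\) the restriction to \(L_p\) must have conductor exactly \(p^e\). In the regular case with \(e\ge1\), \(L_p\) is a line and this means \(\chi\vert_{L_p}\neq0\), a further factor \(p-1\); in the irregular case \(L_p\) is a plane filtered \(0\subset L_p^{[1]}\subset L_p\) by the modulus \(p^1\), and conductor exactly \(p^2\) means \(\chi\) is nontrivial on the one-dimensional quotient \(L_p/L_p^{[1]}\), which excludes the \(p\) characters factoring through \(L_p^{[1]}\) and leaves \(p^2-p=p(p-1)\) choices. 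Using \(\tau=t\) when \(e=0\) and \(\tau=t+1\) when \(e\ge1\), the regular cases give \(p^\varrho(p-1)^{\tau}\) and the irregular case gives \(p^{\varrho+1}(p-1)^{\tau}\), so uniformly the number of characters of conductor exactly \(c\) is \(p^{\varrho+\omega}(p-1)^{\tau}\). Dividing by \(p-1\) yields \(m_p(d,c)=p^{\varrho+\omega}(p-1)^{\tau-1}\); note that for \(c>1\) the trivial character is automatically excluded by the ramification requirement, which is why no \lq\lq\(-1\)\rq\rq\ correction appears.

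The main obstacle I anticipate is the honest passage from the abstract local summands to the genuine conductor of the field, and in particular the verification that freeness forces the direct-sum decomposition with no collapsing of local contributions; this is exactly where (2.3.2) and the first inequality are used, and where one must check that the \(\delta\)-contributions (not the \(\Delta\)-contributions) are the ones relevant for the ring class group. The most delicate bookkeeping is the irregular case \(p=3\), \(d\equiv-3\pmod 9\), \(e=2\), where the two successive increments \(\delta_1=\delta_2=1\) produce the two-step filtration of \(L_p\) and hence the extra factor \(p\) responsible for \(\omega=1\); cleanly separating \lq\lq conductor exactly \(p^2\)\rq\rq\ from \lq\lq conductor \(p^1\)\rq\rq\ there is the one place where a naive product count would fail.
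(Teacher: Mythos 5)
Your proposal is correct, but it takes a genuinely different route from the paper. The paper proves Theorem \ref{thm:Codim0} by citation: it unifies the free cases of \cite[Cor.\ 3.2, 1, p.\ 839]{Ma3} and \cite[Cor.\ 3.3, case 1, p.\ 841]{Ma3}, which rest on the defect-graded inclusion--exclusion formulas recalled here as (5.3.1) and (5.4.1); with \(\delta_p(c)=0\) every inner-sum term \(\frac{p^{\delta(c)-\delta(q_{\Bbbk})}-1}{p-1}\) in (5.3.1) vanishes, and in (5.4.1) every term equals \(\frac{3^2-3}{2}=3\), so the outer alternating sum collapses by the binomial theorem to \(p^{\varrho+\omega}(p-1)^{\tau-1}\). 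You instead argue structurally and self-containedly: the inversion action of \(\mathrm{Gal}(K\vert\mathbb{Q})\) on \(\mathcal{C}_{c,p}\) via \(\mathfrak{a}\,\mathfrak{a}^{\tau}=\mathrm{Norm}(\mathfrak{a})\mathcal{O}\in\mathcal{R}_c\) (valid since \(p\) is odd, so every index-\(p\) subgroup is stable and yields a dihedral \(N\)); then the key observation that \(\delta_p(c)=0\) forces \(R_cI_p(f)=R_cK(f)^p\) in (2.3.1), so the increment part of \(\mathcal{C}_{c,p}\) is exactly the Chinese-remainder direct sum of local pieces (lines at the \(q_i\), a line or the \(w=2\) plane at the critical prime); and finally a componentwise character count with the exact-conductor condition. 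What your route buys is transparency: it explains why the free formula is a pure product (unramified part times free part, with no restrictive factor), and it passes the consistency check at \(c=1\) against (3.1.1), since there the trivial character must be removed by hand. What it does not buy is generality: for \(\delta_p(c)\ge 1\) the local decomposition collapses and one is forced back to the machinery of Section \ref{s:MainProof}, which is why the paper keeps the inclusion--exclusion framework. Two points you should make explicit to close the argument: reading \lq\lq conductor exactly \(c\)\rq\rq\ as componentwise nontriviality requires that every divisor \(c'\mid c\) of a free conductor is itself free, which is the anti-monotonicity of \(\psi\) (Proposition \ref{prp:Morphism}(3), equivalently \(R_{c'}\supseteq R_c\)), so that the kernel of \(\mathcal{C}_{c,p}\to\mathcal{C}_{c',p}\) is precisely the sum of the dropped local summands; and in the irregular case the \(p\) excluded characters are those killing the kernel of the reduction \(L_p\to L_p^{[1]}\) to the modulus-\(3\) level, i.e.\ factoring through the one-dimensional mod-\(3\) quotient --- your phrase \lq\lq nontrivial on the quotient \(L_p/L_p^{[1]}\)\rq\rq\ has sub- and quotient roles switched, though the resulting count \(p^2-p=p(p-1)\), and hence \(\omega=1\), is unaffected.
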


\begin{proof}

This statement unifies the formula for a free regular conductor
\cite[Cor. 3.2, 1, p. 839]{Ma3},
and the formula for a free irregular conductor
\cite[Cor. 3.3, case 1, p. 841]{Ma3}

\end{proof}



It is illuminating to add the following Corollary
which, though being almost trivial, provides a powerful method
for constructing ramified cyclic extensions \(N\)
with relative prime degree \(p\ge 3\)
of complex quadratic fields \(K\)
having ordinary \(p\)-class rank \(\varrho_p=0\)
by simply selecting \(p\)-admissible conductors over \(K\).
The only exception which must be excluded is
the construction of the normal closures of pure cubic fields
over Eisenstein's cyclotomic quadratic field \(K=\mathbb{Q}(\sqrt{-3})\)
for \(p=3\).

\begin{corollary}
\label{cor:Codim0}

Let \(p\ge 3\) be an odd prime
and \(K=\mathbb{Q}(\sqrt{d})\) be a complex quadratic number field
with modified \(p\)-class rank \(\sigma_p=0\)
and discriminant \(d<0\).
Assume that \(c=p^eq_1\cdots q_t\) is any \(p\)-admissible conductor over \(K\).
Then \(c\) is free with \(p\)-defect \(\delta_p(c)=0\)
and the \(p\)-multiplicity \(m_p(d,c)\) of \(c\) with respect to \(d\) is given by
formula (3.2.1) in Theorem
\ref{thm:Codim0}.

\end{corollary}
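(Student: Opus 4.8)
The plan is to reduce the statement to the second inequality (2.5.1) followed by a direct appeal to Theorem~\ref{thm:Codim0}. First I would note that the hypothesis $\sigma_p=0$ makes the $p$-defect vanish for \emph{every} $p$-admissible conductor: by (2.5.1), $\delta_p(c)\le\sigma_p=0$, and since $\delta_p(c)=\mathrm{codim}(V_p(c))$ is the dimension of a quotient $\mathbb{F}_p$-vector space it is non-negative, whence $\delta_p(c)=0$. Thus $c$ is free, which is the first assertion of the corollary; this is the only step in which the hypothesis $\sigma_p=0$ is used in an essential way.

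With $c$ established as a free $p$-admissible conductor, all hypotheses of Theorem~\ref{thm:Codim0} are in force ($p\ge 3$ odd, $\sigma_p\ge 0$, and $\delta_p(c)=0$), so that theorem applies verbatim and yields $m_p(d,c)=p^{\varrho+\omega}(p-1)^{\tau-1}$, which is exactly formula (3.2.1). I would stress that the regular/irregular dichotomy of Definition~\ref{dfn:TypesOfCnd} --- and in particular the free-prime-divisor requirement $\delta_3(3)=0$ in the irregular case $p=3$, $d\equiv-3\pmod 9$, $e=2$ --- is already part of the conclusion of Theorem~\ref{thm:Codim0}, so it needs no separate verification here. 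Should one wish to check it directly, it follows at once from monotonicity of the defect along the divisor lattice: $3\mid c$ forces $S_c\subseteq S_3$, hence $R_cK(f)^p\subseteq R_3K(f)^p$, hence $V_p(c)\subseteq V_p(3)$, and therefore $\delta_3(3)\le\delta_3(c)=0$.

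Finally, I would account for the exceptional field flagged before the statement. By Definition~\ref{dfn:ModifClRk}, equation (2.5.2), the Eisenstein field $K=\mathbb{Q}(\sqrt{-3})$ with $p=3$ has $\sigma_3=\varrho_3+1\ge 1$ and so can never satisfy $\sigma_p=0$; for the remaining complex quadratic fields ($d<-3$, or $d=-3$ with $p\ge 5$) one has $\sigma_p=\varrho_p$, so $\sigma_p=0$ merely forces $\varrho_p=0$. Thus the single exceptional case is excluded automatically by the hypotheses, and nothing further need be said. There is no real obstacle in this proof: its entire content is the one-line deduction $\delta_p(c)\le\sigma_p=0$ from the second inequality, the rest being the invocation of Theorem~\ref{thm:Codim0}.
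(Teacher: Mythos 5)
Your proof is correct and is essentially identical to the paper's own: the entire content of the corollary is the one-line deduction \(\delta_p(c)\le\sigma_p=0\) from the second inequality (2.5.1), after which Theorem~\ref{thm:Codim0} applies verbatim. Your supplementary observations are sound but superfluous --- the anti-monotonicity argument \(V_p(c)\le V_p(3)\), hence \(\delta_3(3)\le\delta_3(c)\), is exactly Proposition~\ref{prp:Morphism}(3), and the automatic exclusion of \(K=\mathbb{Q}(\sqrt{-3})\) for \(p=3\) via (2.5.2) is indeed why the paper need not treat that case separately.
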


\begin{proof}

Since \(\delta_p(c)\le\sigma_p=0\) by the second inequality (2.5.1),
any \(p\)-admissible conductor over \(K\) must be free
and we can apply Theorem
\ref{thm:Codim0}.

\end{proof}



\begin{example}
\label{xpl:Codim0}

By means of Corollary
\ref{cor:Codim0}
we can determine exact counts of families of complex dihedral fields \(L\)
with multiplicities \(m_p(d,c)\in\lbrace 1,p-1,p+1\rbrace\)
up to a given bound of the root discriminant
\(\lvert d_L\rvert^{\frac{2}{p-1}}=c^2\lvert d\rvert<B\)
\cite[p. 843]{Ma3}.

\begin{enumerate}

\item
For \(B=10^6\), we have to consider prime (or prime power) conductors
\(c<\sqrt{\frac{B}{\lvert d\rvert}}\), that is,
\(c\le 577\) for \(p\ge 5\)
(actually \(c\le 571\) for \(p=5\) and \(c\le 547\) for \(p=7\))
with minimal \(\lvert d\rvert=3\), and
\(c<500\) (actually \(c\le 491\)) for \(p=3\) with minimal \(\lvert d\rvert=4\).
For each of these fixed prime (power) conductors \(c\),
we count the fundamental discriminants \(\lvert d\rvert<\frac{B}{c^2}\)
of complex quadratic fields of \(p\)-rank zero,
for which \(c\) is \(p\)-admissible.
The results are added and shown in Table
\ref{tbl:Codim0},
where the additional contributions by unramified extensions with \(c=1\)
(Table
\ref{tbl:UnRmfCnt})
and by pure cubic fields with \(d=-3\) for \(p=3\)
\cite[pp. 843--846]{Ma3}
must be taken into consideration.
The count of \(149\,204\) cubic singulets revealed
the omission of \(495\) complex cubic fields of multiplicity \(1\) in
\cite[Table 5.2, p. 321]{FgWl},
which has been corrected later in
\cite{Wl}.
These \(149\,204\) cubic fields form dominant \(81.79\%\) among a total of \(182\,417\).

\renewcommand{\arraystretch}{1.0}

\begin{table}[ht]
\caption{Count of various multiplets of dihedral fields with \(-10^6<c^2d<0\)}
\label{tbl:Codim0}
\begin{center}
\begin{tabular}{|c|r|r|r|c|r|r|c|r|r|}
\hline
 \multicolumn{4}{|c|}{\(p=3\)}                        & \multicolumn{3}{|c|}{\(p=5\)}           & \multicolumn{3}{|c|}{\(p=7\)}            \\
 \multicolumn{4}{|c|}{\downbracefill}                 & \multicolumn{3}{|c|}{\downbracefill}    & \multicolumn{3}{|c|}{\downbracefill}     \\
 \(m_3(d,c)\) & \(c=1\)      & \(c>1\)     & \(d=-3\) & \(m_5(d,c)\) &   \(c=1\)   &  \(c>1\)   & \(m_7(d,c)\) &   \(c=1\)   &   \(c>1\)   \\
\hline
        \(1\) & \(118\,455\) & \(30\,559\) &  \(190\) &        \(1\) & \(69\,365\) & \(3\,887\) &        \(1\) & \(47\,595\) &  \(2\,012\) \\
        \(2\) &        \(0\) &  \(1\,639\) &   \(44\) &        \(4\) &       \(0\) &     \(20\) &        \(6\) &       \(0\) &       \(7\) \\
        \(4\) &   \(3\,190\) &      \(15\) &   \(11\) &        \(6\) &     \(398\) &      \(0\) &        \(8\) &      \(97\) &       \(0\) \\
\hline
\end{tabular}
\end{center}
\end{table}

\item
The \(7\) sextuplets of complex cubic fields \(L\) with discriminant \(d_L=c^2d\) in Table
\ref{tbl:FakeQuintuplets}
have erroneously been announced as quintuplets in
\cite[Table 5.2, p. 321]{FgWl}.
The discriminants of their complex quadratic base fields \(K\)
satisfy the simultaneous congruences
\(d\equiv 5\pmod{8}\) and \(d\equiv -3\pmod{9}\),
thus enabling irregular conductors \(c\) divisible by \(3^2\) and also by \(2\).
In fact, they all share the common irregular conductor \(c=2\cdot 3^2=18\).
The multiplicity \(m_3(d,c)=3\cdot 2=6\) of the first \(6\) of them
\cite[Part 2, a), p. S57]{Ma3},
where \(K\) has \(3\)-class rank \(\varrho=0\),
can simply be calculated with Hasse's formula (3.2.1),
putting \(\omega=1\) and \(\tau=2\),
according to Corollary
\ref{cor:Codim0}.
The single case with \(d=-1\,371\), \(\varrho=1\), and \(\delta_3(3)=1\)
\cite[Part 2, e), p. S58]{Ma3}
is accessible by formula (3.3.2) in Theorem
\ref{thm:Codim1Irr},
where \(u=0\), \(n=2\).

\renewcommand{\arraystretch}{1.0}

\begin{table}[ht]
\caption{Seven incomplete quintuplets which are actually sextuplets \cite[Table 5.2, p. 321]{FgWl}}
\label{tbl:FakeQuintuplets}
\begin{center}
\begin{tabular}{|r|c||c|r|c|c|}
\hline
 \(d\)       & \(\varrho_3\) & \(c\)  & \(d_L=c^2d\)   & \(m_3(d,c)\) & missing regulator \(\mathrm{R}_L\) \\
\hline
    \(-291\) &         \(0\) & \(18\) &   \(-94\,284\) &        \(6\) &                          \(25.64\) \\
 \(-1\,299\) &         \(0\) & \(18\) &  \(-420\,876\) &        \(6\) &                          \(20.54\) \\
 \(-1\,659\) &         \(0\) & \(18\) &  \(-537\,516\) &        \(6\) &                          \(24.58\) \\
 \(-1\,947\) &         \(0\) & \(18\) &  \(-630\,828\) &        \(6\) &                          \(38.72\) \\
 \(-2\,307\) &         \(0\) & \(18\) &  \(-747\,468\) &        \(6\) &                          \(17.96\) \\
 \(-2\,667\) &         \(0\) & \(18\) &  \(-864\,108\) &        \(6\) &                          \(31.18\) \\
\hline
 \(-1\,371\) &         \(1\) & \(18\) &  \(-444\,204\) &        \(6\) &                          \(25.93\) \\
\hline
\end{tabular}
\end{center}
\end{table}

\end{enumerate}

\end{example}



\subsection{Restrictive conductors}
\label{ss:RstrCond}

Entirely new phenomena arise
for quadratic base fields \(K\)
with modified \(p\)-class rank \(\sigma_p=1\).
Hasse's formulas
\cite[I-II, pp. 581--582]{Ha}
can be applied to free conductors \(c\) over \(K\) only.
For \textit{restrictive} conductors with \(p\)-defect \(\delta_p(c)=1\),
however, our extensions
\cite[Cor. 3.2--3, pp. 839--841]{Ma3}
become inevitable.
The following formulas (3.3.1) and (3.3.2) of codimension \(1\)
enumerate the ramified dihedral fields \(N\) of relative degree \(p\) over \(K\)
with conductor \(c>1\) having \(\delta_p(c)=1\).



\begin{theorem}
\label{thm:Codim1Reg}

Let \(p\ge 3\) be an odd prime
and \(K=\mathbb{Q}(\sqrt{d})\) be a quadratic number field
with modified \(p\)-class rank \(\sigma_p\ge 1\)
and discriminant \(d\).
Assume that \(c=p^eq_1\cdots q_t\) is a restrictive \(p\)-admissible conductor over \(K\)
with \(p\)-defect \(\delta_p(c)=1\).
If \(c\) is either regular or irregular
with \(p=3\), \(d\equiv -3\pmod{9}\), \(e=2\), and free prime divisor \(3\), \(\delta_3(3)=0\),
then the \(p\)-multiplicity of \(c\) with respect to \(d\) is given by
\[(3.3.1)\qquad m_p(d,c)=p^{\varrho+\omega}(p-1)^u\cdot\frac{1}{p}\left((p-1)^{v-1}-(-1)^{v-1}\right),\]
where \(\varrho=\varrho_p\) denotes the ordinary \(p\)-class rank of \(K\),
\(\omega=0\) if \(c\) is regular,
and \(\omega=1\) otherwise,
\(u=\#\lbrace 1\le k\le\tau\mid V_p(q_k)=V_p\rbrace\),
and \(v=\tau-u\).

\end{theorem}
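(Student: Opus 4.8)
The plan is to translate the statement into a count of \emph{dihedral characters} and then isolate the conductor-exact ones by an inclusion--exclusion that is governed by the ring-space data $V_p(q_k)$. By Artin reciprocity, the dihedral fields $N$ with quadratic subfield $K$ and conductor dividing $c$ correspond bijectively to the nontrivial homomorphisms $\chi:\mathcal{C}_{c,p}\to C_p$ on which the nontrivial automorphism $\sigma$ of $K\vert\mathbb{Q}$ acts by inversion, taken up to the scaling action of $\mathbb{F}_p^\times$ (rescaling $\chi$ fixes its kernel, hence $N$). Since $p$ is odd these characters form the $(-1)$-eigenspace, an $\mathbb{F}_p$-vector space, and $N$ has conductor \emph{exactly} $c=q_1\cdots q_\tau$ iff $\chi$ is ramified at every $q_k$, i.e.\ its local component $\ell_k(\chi)\in\mathbb{F}_p$ at each $q_k$ is nonzero. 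First I would fix these local coordinates together with the unramified part $\chi_0$ coming from the $p$-class group, which for odd $p$ is entirely of $(-1)$-type and so contributes rank $\varrho$.

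The heart of the matter is to determine the image of the coordinate map $\chi\mapsto(\chi_0;\ell_1,\dots,\ell_\tau)$, and this is exactly where the reinterpretation of Definition \ref{dfn:RingSpace} is used. I would first record that $V_p(c)=\bigcap_{k}V_p(q_k)$, which follows from the decoupling of the congruence $\equiv 1\pmod c$ into its prime-power parts, so that by (2.3.2) and the isomorphisms preceding Definition \ref{dfn:RingSpace} the admissible relations among the local coordinates are dual to the ring spaces $V_p(q_k)$, their number being $\operatorname{codim}V_p(c)=\delta_p(c)$. A free prime has $V_p(q_k)=V_p$ (these are counted by $u$) and imposes no relation; a restrictive prime has $\operatorname{codim}V_p(q_k)=1$. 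In the codimension-$1$ situation $\delta_p(c)=1$ all $v=\tau-u$ restrictive primes must share the single hyperplane $V_p(c)=V_p(q_k)$, and dually this yields precisely one homogeneous relation, supported on the restrictive primes with all coefficients nonzero; after rescaling the one-dimensional local spaces it reads $\ell_{k_1}+\dots+\ell_{k_v}=0$. Proving this duality---that the lone defect relation is carried by exactly the $v$ restrictive coordinates and is nondegenerate on each---is the step I expect to be the main obstacle, and it is precisely what the apparatus of Section \ref{s:QdrInv} was built to supply.

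Granting this, the count is elementary. The unramified part contributes $p^{\varrho}$ choices for $\chi_0$, augmented in the irregular case by the extra local dimension $w=2$ at the critical prime recorded in Proposition \ref{prp:PrimResCl}, which is unconstrained by the conductor and so behaves as an unramified direction, supplying the factor $p^{\omega}$ with $\omega\in\{0,1\}$. Each of the $u$ free primes forces $\ell_k\in\mathbb{F}_p^\times$ with no further constraint, giving $(p-1)^u$. At the $v$ restrictive primes I must count tuples in $(\mathbb{F}_p^\times)^v$ with $\ell_{k_1}+\dots+\ell_{k_v}=0$; a standard orthogonality argument over $\mathbb{F}_p$ gives
\[
N_v=\frac{1}{p}\bigl((p-1)^v+(p-1)(-1)^v\bigr).
\]

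Multiplying, the number of dihedral characters of conductor exactly $c$ is $p^{\varrho+\omega}(p-1)^u\,N_v$; here $v\ge 1$ guarantees $\tau\ge 1$, so nontriviality of $\chi$ is automatic and the scaling action of $\mathbb{F}_p^\times$ is free. Dividing by $p-1$ to pass from characters to fields yields
\[
m_p(d,c)=p^{\varrho+\omega}(p-1)^u\cdot\frac{1}{p}\bigl((p-1)^{v-1}-(-1)^{v-1}\bigr),
\]
which is formula (3.3.1). The same bookkeeping with $v=0$ reproduces (3.2.1), providing a calibration check, while $v=1$ forces $N_1=0$: a lone restrictive prime cannot ramify, matching the vanishing of the bracket. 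The regular/irregular dichotomy and the value of $\omega$ are read off from the local analysis in the proof of Proposition \ref{prp:PrimResCl} together with the reduction already carried out in \cite{Ma3}, so the only genuinely new ingredient is the single-relation structure established in the second paragraph.
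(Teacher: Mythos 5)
Your proposal is correct in substance, but it takes a genuinely different route from the paper. The paper's own proof of Theorem \ref{thm:Codim1Reg} is a one-line unification by citation: formula (3.3.1) was obtained in \cite{Ma3} by specializing the divisor-sum inclusion--exclusion formulas recalled here as (5.3.1) and (5.4.1), the same machinery that Section \ref{s:MainProof} redeploys via \(s\)-occupation numbers for the codimension-\(2\) theorems. You instead count dihedral characters directly in local coordinates: the unramified block of dimension \(\varrho\) (augmented by the unconstrained extra direction at the critical prime, giving \(p^{\omega}\) in the irregular case), a free value in \(\mathbb{F}_p^{\times}\) at each of the \(u\) free primes, and the nowhere-vanishing solutions of a single linear relation at the \(v\) restrictive primes, with
\[
N_v=\frac{1}{p}\bigl((p-1)^v+(-1)^v(p-1)\bigr),
\]
followed by division by \(p-1\) to pass from characters to fields; this reproduces (3.3.1) exactly, and your calibrations (\(v=0\) recovering (3.2.1), \(v=1\) forcing multiplicity \(0\)) are correct. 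Your route buys self-containedness and transparency --- the restrictive factor is literally the count of nowhere-zero solutions of one equation over \(\mathbb{F}_p\), which explains conceptually why a lone restrictive prime cannot ramify --- whereas the paper's route buys uniformity, since the identical master formula also yields Theorems \ref{thm:Codim2Reg} and \ref{thm:Codim2Irr}.

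The one step you explicitly ``grant'' --- that the lone defect relation is supported on exactly the restrictive coordinates, nondegenerately --- does hold and closes inside the paper's own apparatus, so it is a sketch rather than a failure: apply Theorem \ref{prp:RingClRank} to each divisor \(c'\mid c\). The characters of conductor dividing \(c'\) form a subspace of dimension \(\varrho+t'+w'-\delta_p(c')\) inside the coordinate subspace \(\lbrace\ell_j=0\text{ for }q_j\nmid c'\rbrace\) of dimension \(\varrho+t'+w'\), so the relation \(\varphi\) restricts nontrivially to that subspace if and only if \(\delta_p(c')=1\); taking \(c'=q_k\) shows the coefficient of \(\varphi\) at \(\ell_k\) is nonzero precisely when \(q_k\) is restrictive, and \(c'=1\) kills the unramified coefficients. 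The same dimension count at \(c'=3\) and \(c'=3^2\) justifies your slightly glossed irregular bookkeeping: \(\delta_3(3)=0\) frees the level-\(3\) coordinate (the factor \(3^{\omega}\)), while the new level-\(9\) coordinate, forced nonzero by exactness of the conductor, is counted in \(u\) or in \(v\) according as \(\delta_3(3^2)\) is \(0\) or \(1\), in agreement with the Remark following the theorem.
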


\begin{proof}

This statement unifies the formula for a restrictive regular conductor
\cite[Cor. 3.2, 2, p. 839]{Ma3},
and the two formulas for a restrictive irregular conductor with \(\delta_3(3)=0\)
\cite[Cor. 3.3, cases 2 and 3, p. 841]{Ma3}

\end{proof}

\begin{remark}

Here, the two irregular cases with free prime divisor \(3\mid c\), that is \(V_3(3)=V_3\),
are easily integrated into the regular formula (3.3.1),
because the additional factor \(3^\omega\) of the \(3\)-multiplicity is used.
In the first case, \(V_3(3^2)=V_3\), the irregular prime power divisor \(q_{t+1}=3^2\)
is counted by the occupation number \(u\ge 1\).
In the second case, \(V_3(3^2)=V_3(c)\), it contributes to the position counter \(v\ge 1\).

\end{remark}



\begin{theorem}
\label{thm:Codim1Irr}

Let \(K=\mathbb{Q}(\sqrt{d})\) be a quadratic number field
with modified \(3\)-class rank \(\sigma_3\ge 1\)
and discriminant \(d\equiv -3\pmod{9}\).
Assume that \(c=3^2q_1\cdots q_t\) is a restrictive \(3\)-admissible irregular conductor over \(K\)
with \(3\)-defect \(\delta_3(c)=1\).
Suppose the prime divisor \(3\mid c\) is restrictive with \(\delta_3(3)=1\),
whence the \(3\)-ring space \(V_3(3)\) coincides with the hyperplane \(H=V_3(c)\)
in the vector space \(V_3\).
Denote by \(n=\#\lbrace 1\le k\le\tau\mid V_3(q_k)=H\rbrace\) the occupation number of \(H\).\\
Then the \(3\)-multiplicity of \(c\) with respect to \(d\) is given by the degenerate formula
\[(3.3.2)\qquad m_3(d,c)=3^{\varrho}\cdot 2^{u+n-1},\]
where \(\varrho=\varrho_3\) denotes the \(3\)-class rank of \(K\),
\(u=\#\lbrace 1\le k\le\tau\mid V_3(q_k)=V_3\rbrace\), and \(u+n=\tau\).

\end{theorem}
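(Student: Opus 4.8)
Throughout this proof set \(p=3\). The plan is to count the dihedral fields as functionals on the \(p\)-ring class group and to reduce the exact-conductor requirement to an inclusion–exclusion over the prime-power constituents of \(c\), whose terms are controlled by the positions of the ring spaces \(V_3(q_k)\) relative to the hyperplane \(H\). First I would recall why no separate dihedral (minus) condition is needed: the order defining \(\mathcal{C}_{c,3}\) is generated by \(R_c=\mathbb{Q}(c)S_c\), which is stable under the nontrivial automorphism \(\sigma\) of \(K\mid\mathbb{Q}\), and \(\sigma\) acts by inversion on \(\mathcal{C}_{c,3}\); hence \(\mathrm{F}_{c,p}(K)\) is generalized dihedral over \(\mathbb{Q}\) and every cyclic degree-\(p\) extension \(N\mid K\) inside it is automatically dihedral over \(\mathbb{Q}\). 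Such \(N\) correspond bijectively to nonzero functionals \(\chi\in\mathrm{Hom}(\mathcal{C}_{c,3},\mathbb{F}_p)\) modulo scaling by \(\mathbb{F}_p^\times\), so that
\[
m_3(d,c)=\tfrac{1}{p-1}\,\#\{\chi\neq 0:\ \chi\ \text{has conductor exactly}\ c\}.
\]

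A functional is ramified to less than the required level at \(q_k\) exactly when it factors through a smaller ring class group, and the dimension of the space of such functionals is read off from Theorem \ref{prp:RingClRank}, which gives \(\dim_{\mathbb{F}_p}\mathrm{Hom}(\mathcal{C}_{c',3},\mathbb{F}_p)=\varrho_{c',3}=\varrho+t'+w'-\delta_3(c')\) for every divisor \(c'\) of \(c\). I would then run the inclusion–exclusion over subsets \(S\) of the constituents that are relaxed, writing the intersection of the corresponding bad events as \(\mathrm{Hom}(\mathcal{C}_{c'_S,3},\mathbb{F}_p)\) and evaluating \(\delta_3(c'_S)=\mathrm{codim}\bigcap_{j\notin S}V_3(q_j)\). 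Here the observation that every \(V_3(q_k)\) is either \(V_3\) (the \(u\) free constituents) or the single hyperplane \(H=V_3(c)\) (the \(n\) restrictive ones, \(u+n=\tau\), with two distinct hyperplanes impossible since \(\delta_3(c)=1\)) makes each such defect equal to \(0\) or \(1\) and computable purely combinatorially from whether \(S\) omits a restrictive index.

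The crucial and delicate point is the special prime. Since the admissible exponents at \(3\) are \(\{0,1,2\}\), exact conductor \(3^2\) forbids both \(3^0\) and \(3^1\), so relaxing the constituent \(q_{t+1}=3^2\) means passing to conductor \(c/3\) at exponent \(3^1\), not to \(c/3^2\). The hypothesis \(\delta_3(3)=1\), that is \(V_3(3)=H\), is decisive: at exponent \(3^1\) the local \(3\)-contribution still supplies the restrictive hyperplane \(H\), so the defect of the relaxed sub-conductor remains \(1\) while \(w\) drops from \(2\) to \(1\). The inclusion–exclusion therefore splits into the part where \(3^2\) is left untouched and the part where it is lowered to \(3^1\); in both parts every relaxed sub-conductor has defect \(1\), so each part is a product of binomial sums collapsing to the same power of \(1-1/p\) with exponent \(u+n-1=t\), giving contributions \(p^{\varrho+1}(p-1)^t\) and \(-p^{\varrho}(p-1)^t\).

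These combine to \(p^{\varrho}(p-1)^{t+1}\), and dividing by \(p-1\) and substituting \(p=3\), \(t=u+n-1\) yields \(m_3(d,c)=3^{\varrho}\cdot 2^{u+n-1}\). I expect the main obstacle to be exactly this cancellation, which also explains the label \emph{degenerate}: it is the retention of the hyperplane \(H\) at exponent \(3^1\), forced by the restrictiveness \(\delta_3(3)=1\) of the prime divisor \(3\), that annihilates the alternating remainder \((-1)^{v-1}\) surviving in the regular codimension-\(1\) formula of Theorem \ref{thm:Codim1Reg} and collapses the count to a clean power of \(2\). The subsidiary difficulties I would guard against are the correct bookkeeping of the three conductor levels \(3^0,3^1,3^2\) at the special prime, and a clean justification that the local contributions at the ramified primes all lie in the inverting \(\sigma\)-part, so that the functional count in the first step is genuinely unconstrained by any further minus condition.
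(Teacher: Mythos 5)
Your proposal is correct, and it reaches (3.3.2) by a genuinely different route from the paper, whose own proof of Theorem \ref{thm:Codim1Irr} is a citation of \cite[Cor.\ 3.3, case 4, p.\ 841]{Ma3}; the in-paper analogue of that argument (carried out in section \ref{s:MainProof} for the codimension-\(2\) theorems) takes the master formula (5.4.1) as given input from \cite{Ma3} and evaluates its inner sums via \(s\)-occupation numbers before collapsing by the binomial theorem. You instead rebuild the count ab initio: the generalized dihedral structure of \(\mathrm{F}_{c,p}(K)\vert\mathbb{Q}\) via the inversion action of \(\sigma\) (correctly grounded in \(\mathfrak{a}\sigma(\mathfrak{a})=(\mathrm{Norm}(\mathfrak{a}))\) with rational generator in \(\mathbb{Q}(c)\subseteq R_c\)), the identification \(m_3(d,c)=\frac{1}{p-1}\#\lbrace\chi\ne 0\text{ of exact conductor }c\rbrace\), inclusion--exclusion over the \(t+1\) maximal relaxations with the wild constituent lowered to level \(3^1\) rather than removed, ranks supplied by Theorem \ref{prp:RingClRank}, and the decisive constancy \(\delta_3(3^e q_{\Bbbk})=1\) for \(e\in\lbrace 1,2\rbrace\), forced by \(V_3(3)=H\) together with \(V_3(q_k)\in\lbrace V_3,H\rbrace\). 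Your two partial sums \(3^{\varrho+1}2^t\) and \(-3^{\varrho}2^t\) are exactly the two levels of the numerator \(3^{2-\delta(3^2q_{\Bbbk})}-3^{1-\delta(3q_{\Bbbk})}\) in (5.4.1), so in effect you give a self-contained proof of that formula specialized to the present case; I verified the arithmetic, and \(\frac{1}{2}\bigl(3^{\varrho+1}-3^{\varrho}\bigr)2^t=3^{\varrho}2^t=3^{\varrho}2^{u+n-1}\) is right. What your approach buys is independence from \cite{Ma3} and a transparent mechanism for the \emph{degeneration}: a constant defect across all relaxed sub-conductors yields a pure binomial collapse \((3-1)^t\) with no surviving alternating term, matching the paper's remark that the \(n\) primes on \(H\) act quasi-free; what it costs is having to re-justify the dihedral correspondence and conductor bookkeeping that the paper simply inherits. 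One small point worth making explicit in a final write-up: a priori the first inequality (2.3.3) permits \(\delta_3(3^2)\le 2\), so \(V_3(3^2)\) could have codimension \(2\); it is anti-monotonicity, \(H=V_3(c)\le V_3(3^2)\le V_3(3)=H\), that pins \(V_3(3^2)=H\) --- your argument covers this implicitly because every constituent space contains \(V_3(c)=H\), but stating it would close the only place a reader might object.
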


\begin{proof}

This statement recalls the formula for a restrictive irregular conductor with \(\delta_3(3)=1\)
\cite[Cor. 3.3, case 4, p. 841]{Ma3}

\end{proof}

\begin{remark}

In the irregular case with restrictive prime divisor \(3\mid c\), that is \(V_3(3)=H\),
we are surprised by a \textit{degeneration}
to a formula of effectively lower codimension, similar to (3.2.1),
since the \(n\) prime divisors \(q_k\) belonging to the distinguished hyperplane \(H\)
behave quasi-free, such that actually effective values
\(u_{\mathrm{eff}}=u+n=\tau\) and \(v_{\mathrm{eff}}=v-n=0\)
replace the position counters \(u\) and \(v\).
Already in our presentation
\cite{Ma2}
we distinctly pointed out this astonishing phenomenon.
However, at that early stage it was not yet clear
that the restrictive prime divisor \(3\mid c\) with \(3\)-defect \(\delta_3(3)=1\)
is the deeper reason,
which will be confirmed by formula (4.2.1) in Theorem
\ref{thm:Codim2Irr}.

\end{remark}



In the following Examples and Corollaries
we provide techniques for dealing with quadratic fields \(K=\mathbb{Q}(\sqrt{d})\)
having modified \(p\)-class rank \(\sigma_p=1\), that is,
complex quadratic fields \(K\)
of ordinary \(p\)-class rank \(\varrho_p=1\),
the cyclotomic quadratic field \(K=\mathbb{Q}(\sqrt{-3})\)
containing the primitive \(p\)th roots of unity for \(p=3\),
and real quadratic fields \(K\)
of ordinary \(p\)-class rank \(\varrho_p=0\)
containing a fundamental unit \(\eta>1\).
In this case, it is not sufficient for the existence of an
extension \(N\) of relative degree \(p\) of \(K\)
simply to select a \(p\)-admissible conductor \(c\) for \(N\vert K\).
There is no guaranty excluding that the multiplicity \(m_p(d,c)\)
either becomes zero or takes a value different from that
expected by the formula (3.2.1) of codimension \(0\). 



\begin{example}
\label{xpl:Codim1Cmpl}

For a complex quadratic field \(K\) of discriminant \(d<0\)
and \(p\)-class rank \(\sigma_p=\varrho_p=1\),
we determine a generating principal \(p\)th power \(\alpha\) of an ideal of \(K\),
such that \(I_p/K^p=\langle\alpha\rangle\),
by means of
\cite[Algorithm, Steps 1--5, pp. 80--81]{Ma}.
That is,
we construct all reduced positive definite binary quadratic forms
\(F=(A,B,C)=AX^2+BXY+CY^2\in\mathbb{Z}\lbrack X,Y\rbrack\), \(\lvert B\rvert\le A\le C\),
of discriminant \(d_F=B^2-4AC=d\)
and we select a form \(F\) of order \(p\) with respect to composition.
Then we determine a prime \(r\) which can be represented by \(F\),
\(r=F(u,v)\) for some integers \(u,v\),
and we compute an integer solution \(x,y\) of the Diophantine equation
\(x^2-y^2d=4r^p\).
The desired generator of the vector space \(V_p=I_p/K^p\) is then given by
\(\alpha=\frac{1}{2}(x+y\sqrt{d})\).

A sufficient condition for a \(p\)-admissible conductor \(c\) over \(K\)
to be free with \(p\)-defect \(\delta_p(c)=0\) is that
\(c\mid y\), resp. \(c\mid x\),
indicated with boldface font in Table
\ref{tbl:Codim1Cmpl},
since this implies
\(\alpha\in\mathcal{O}_c\), resp. \(\alpha^2\in\mathcal{O}_c\),
and the integral generators \(R_c\cap\mathcal{O}\) of the ring modulo \(c\)
are exactly the numbers of the suborder \(\mathcal{O}_c\cap K(c)\)
which are coprime to \(c\). Consequently, the generator
\(\alpha\in\mathcal{O}_c\cap K(c)\) of \(V_p\) is contained in the \(p\)-ring space
\(V_p(c)=I_p(c)\cap R_cK(c)^p/K(c)^p\) modulo \(c\)
and thus we have codimension \(\delta_p(c)=0\).\\
Since any quadratic field \(K\)
has infinitely many free \(p\)-admissible conductors \(c\),
it is natural that the sufficient condition is not necessary in general.
In particular, for bigger conductors \(c\ge 7\)
we have to vary the represented primes \(r\)
and consequently the integer solution \((x,y)\) of the corresponding norm equation.

In Table
\ref{tbl:Codim1Cmpl}
we present the discriminants \(d\) with smallest values
of complex quadratic fields \(K\) having \(3\)-class rank \(\varrho_3=1\),
for which the prime conductors \(2\le c\le 13\), resp. the prime power conductor \(c=3^2\),
are free with \(3\)-defect \(\delta_3(c)=0\).
According to formula (3.2.1),
where we have to put \(p=3\), \(\varrho=1\), \(\omega=0\), \(\tau=1\),
the corresponding complex cubic fields \(L\) arise in triplets with discriminant \(d_L=c^2d\),
which occur in our table
\cite{Ma0}.
The unique exception is the irregular case \(c=3^2\), \(d\equiv -3\pmod{9}\),
where \(\omega=1\)
and we observe a family with \(9\) members.
It was found by Fung and Williams
\cite{FgWl},
confirmed by the above technique using quadratic forms,
and discussed in
\cite[Part 2, c), p. S57]{Ma3}.

\end{example}

\renewcommand{\arraystretch}{1.0}

\begin{table}[ht]
\caption{First occurrence of free prime conductors \(2\le c\le 13\) for \(d<-3\), \(\varrho_3=1\)}
\label{tbl:Codim1Cmpl}
\begin{center}
\begin{tabular}{|r|l|c|r|c||r|r|c|}
\hline
 \(d\)       & condition             & \(F=(A,B,C)\)  & \(r\)   &   \((x,y)\)              & \(c\)  &   \(d_L=c^2d\) & \(m_3(d,c)\) \\
\hline
    \(-307\) & \(\equiv 5\pmod{8}\)  &   \((7,1,11)\) &   \(7\) &      \((12,\mathbf{2})\) &  \(2\) &    \(-1\,228\) &        \(3\) \\
    \(-771\) & \(\equiv +3\pmod{9}\) &  \((13,3,15)\) &  \(13\) &      \((43,\mathbf{3})\) &  \(3\) &    \(-6\,939\) &        \(3\) \\
    \(-687\) & \(\equiv -3\pmod{9}\) &  \((12,9,16)\) &  \(37\) &    \((322,\mathbf{12})\) &  \(3\) &    \(-6\,183\) &        \(3\) \\
     \(-83\) & \((d/5)=-1\)          &    \((3,1,7)\) &   \(7\) &      \((\mathbf{25},3)\) &  \(5\) &    \(-2\,075\) &        \(3\) \\
     \(-59\) & \((d/7)=+1\)          &    \((3,1,5)\) &  \(19\) &    \((\mathbf{119},15)\) &  \(7\) &    \(-2\,891\) &        \(3\) \\
    \(-107\) & \(\equiv +1\pmod{3}\) &    \((3,1,9)\) &  \(13\) &      \((11,\mathbf{9})\) &  \(9\) &    \(-8\,667\) &        \(3\) \\
    \(-331\) & \(\equiv -1\pmod{3}\) &   \((5,3,17)\) &  \(19\) &      \((25,\mathbf{9})\) &  \(9\) &   \(-26\,811\) &        \(3\) \\
 \(-3\,387\) & \(\equiv -3\pmod{9}\) & \((21,15,43)\) &  \(43\) &     \((209,\mathbf{9})\) &  \(9\) &  \(-274\,347\) &        \(9\) \\
    \(-152\) & \((d/11)=-1\)         &    \((6,4,7)\) & \(137\) & \((3\,018,\mathbf{88})\) & \(11\) &   \(-18\,392\) &        \(3\) \\
     \(-87\) & \((d/13)=+1\)         &    \((4,3,6)\) & \(181\) & \((4\,846,\mathbf{52})\) & \(13\) &   \(-14\,703\) &        \(3\) \\
\hline
\end{tabular}
\end{center}
\end{table}



\noindent
The following Corollary provides a thorough proof
of our statement in
\cite[Example 1, p. 840]{Ma3}.

\begin{corollary}
\label{cor:Codim1Cyclo}

Let \(K\) denote the cyclotomic quadratic field \(\mathbb{Q}(\sqrt{-3})\)
with discriminant \(d=-3\equiv -3\pmod{9}\)
(enabling irregular \(3\)-admissible conductors),
ordinary \(p\)-class rank \(\varrho_p=0\), for any prime \(p\),
but modified \(3\)-class rank \(\sigma_3=1\).

\begin{enumerate}

\item
Every prime \(q\in\mathbb{P}\) is a \(3\)-admissible conductor over \(K\).

\item
A prime \(q\) is a free conductor over \(K\), if and only if \(q\equiv\pm1\pmod{9}\).

\item
In particular, \(q=3\) is a restrictive conductor, \(\delta_3(3)=1\), over \(K\)
and therefore the \(3\)-multiplicity \(m_3(-3,c)\)
of any irregular conductor \(c\) over \(K\)
is given by formula (3.3.2).

\item
The \(3\)-multiplicity \(m_3(-3,c)\) of a regular conductor \(c\) over \(K\)
is given by formula (3.3.1),
where \(u=\#\lbrace q\in\mathbb{P}\mid v_q(c)=1,\ q\equiv\pm1\pmod{9}\rbrace\).

\end{enumerate}

\end{corollary}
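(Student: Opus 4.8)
The plan is to reduce the whole corollary to the behaviour of the single generator of the one-dimensional space \(V_3\). Since \(K=\mathbb{Q}(\sqrt{-3})\) has class number one, \(\varrho_3=0\) and the isomorphism \(V_3\simeq\mathcal{C}_3\times(U/U^3)\) of Definition \ref{dfn:ModifClRk} collapses to \(V_3\simeq U/U^3\); as \(U=\langle\zeta_6\rangle\) is cyclic of order \(6\), cubing has image \(\{\pm 1\}\), so \(\dim_{\mathbb{F}_3}(U/U^3)=1=\sigma_3\) and a generator is the primitive cube root of unity \(\zeta=\frac{-1+\sqrt{-3}}{2}\) (of order \(3\) in \(U\), hence not a cube there). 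Thus \(V_3=\langle\zeta\rangle\), and by Definition \ref{dfn:RingSpace} a conductor is free exactly when the image of \(\zeta\) vanishes in the one-dimensional quotient \(K(f)/R_cK(f)^3\). Part (1) then follows from the admissibility congruence of Definition \ref{dfn:AdmCnd}: for \(q\neq 3\) the elementary fact \(\left(\frac{-3}{q}\right)=+1\Leftrightarrow q\equiv 1\pmod 3\) shows that the sign of the symbol always matches \(q\) modulo \(3\), so \(q\equiv\left(\frac{-3}{q}\right)\pmod 3\) holds, while \(q=3=p\) is admissible with exponent \(e=1\in\{0,1,2\}\) precisely because \(d\equiv-3\pmod 9\).

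Part (2) is the local computation of the image of \(\zeta\) in \(K(f)/R_qK(f)^3\). If \(q\equiv 1\pmod 3\) then \(q\) splits, \(\mathcal{O}/q\mathcal{O}\cong\mathbb{F}_q\times\mathbb{F}_q\), and \(\zeta\) maps to \((\omega,\omega^{-1})\) for a primitive cube root \(\omega\in\mathbb{F}_q^\times\); dividing out the diagonal image of \(U(\mathbb{Z}/q\mathbb{Z})\) leaves \(\omega^2\in\mathbb{F}_q^\times\), which is a cube exactly when \(9\mid q-1\). If \(q\equiv-1\pmod 3\) then \(q\) is inert, \(\mathcal{O}/q\mathcal{O}\cong\mathbb{F}_{q^2}\), and \(\zeta\) maps to a primitive cube root \(\omega\in\mathbb{F}_{q^2}^\times\); since \(3\nmid q-1\) the quotient by \(U(\mathbb{Z}/q\mathbb{Z})=\mathbb{F}_q^\times\) is cyclic of order \(q+1\), in which \(\omega\) is a cube precisely when \(9\mid q+1\). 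Both cases amount to \(q\equiv\left(\frac{-3}{q}\right)\pmod 9\), that is \(q\equiv\pm 1\pmod 9\), which is the asserted freeness criterion.

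For part (3) the decisive computation is at the ramified prime. Writing \(\mathcal{O}=\mathbb{Z}[\omega]\) with \(\omega=\zeta\) and using \(X^2+X+1\equiv(X-1)^2\pmod 3\), I would identify \(\mathcal{O}/3\mathcal{O}\cong\mathbb{F}_3[\pi]/(\pi^2)\) with \(\pi=\omega-1\); then \(U(\mathcal{O}/3\mathcal{O})\cong C(2)\times C(3)\) and its \(3\)-Sylow subgroup is exactly \(\{1,\omega,\omega^2\}=\langle\zeta\rangle\). Here \(t=0,\ w=1\) by Proposition \ref{prp:PrimResCl}, so \(K(f)/R_3K(f)^3\) is one-dimensional and the image of \(\zeta\) generates it; hence \(\delta_3(3)=1\) and the prime \(3\) is restrictive, consistent with \(3\not\equiv\pm 1\pmod 9\). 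For an irregular conductor \(c\) one has \(3\mid c\), so \(R_c\subseteq R_3\) gives the monotonicity \(V_3(c)\subseteq V_3(3)\); combined with the second inequality (2.5.1), namely \(\delta_3(c)\le\sigma_3=1\), this forces \(1=\delta_3(3)\le\delta_3(c)\le 1\), so \(\delta_3(c)=1\) and \(V_3(c)=V_3(3)=H\). Theorem \ref{thm:Codim1Irr} then delivers formula (3.3.2).

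For part (4), in a regular conductor each prime-power component \(q_k\) gives \(V_3(q_k)=V_3\) or the zero subspace according as \(q_k\) is free or restrictive; by part (2) the free components are exactly those with \(q_k\equiv\pm 1\pmod 9\), which therefore compute the occupation number \(u=\#\{k:V_3(q_k)=V_3\}\) of Theorem \ref{thm:Codim1Reg} as stated (the component \(3\), being restrictive, is never counted, consistent with \(3\not\equiv\pm 1\pmod 9\)). When at least one component is restrictive we have \(\delta_3(c)=1\) and Theorem \ref{thm:Codim1Reg} yields formula (3.3.1), while the all-free case falls under Theorem \ref{thm:Codim0}. I expect the main obstacle to lie in parts (2) and (3): one must track the image of \(\zeta\) simultaneously through the quotient by the rational residue classes \(U(\mathbb{Z}/q\mathbb{Z})\) and through reduction modulo cubes, and at \(q=3\) reconcile this with the irregular structure of \(U(\mathcal{O}/3^n\mathcal{O})\) recorded in the proof of Proposition \ref{prp:PrimResCl}.
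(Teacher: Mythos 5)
Your proof is correct, but it takes a genuinely different route from the paper's at the decisive parts (2) and (3). The paper never tracks a generator of \(V_3\): it argues by a global counting argument, combining the rank formula \(\varrho_{q,3}=1-\delta_3(q)\) from Theorem \ref{prp:RingClRank} with the ring class number formula \(\lvert\mathcal{C}_q\rvert=\lvert\mathcal{C}\rvert\cdot\bigl(\varphi_K(q\mathcal{O})/\varphi(q)\bigr)/(U:U\cap\mathcal{O}_q)\) from \cite{Ma1} and the observation that \(\zeta\notin\mathcal{O}_f\) forces \((U:U\cap\mathcal{O}_f)=3\) for every \(f>1\); the chain \(\delta_3(q)=0\iff\varrho_{q,3}=1\iff v_3(\lvert\mathcal{C}_q\rvert)\ge 1\iff v_3(E(q))\ge 2\) then yields the criterion \(q\equiv\pm 1\pmod 9\), and \(\delta_3(3)=1\) falls out of \(v_3(E(3))=1<2\). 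You instead exploit \(\sigma_3=1\) to write \(V_3=\langle\zeta K^3\rangle\) and compute the image of \(\zeta\) directly in \(U(\mathcal{O}/q\mathcal{O})\bigm/U(\mathbb{Z}/q\mathbb{Z})\) modulo cubes: \(\omega^{\pm 2}\in\mathbb{F}_q^\times\) in the split case, an element of order \(3\) in the cyclic quotient of order \(q+1\) in the inert case, and a generator of \(C(3)\) at the ramified prime \(3\) via \(\mathcal{O}/3\mathcal{O}\cong\mathbb{F}_3\lbrack\pi\rbrack/(\pi^2)\); your cube criteria \(9\mid q\mp 1\) are a correct Kummer-theoretic localization of \(\delta_3(q)\), and the identifications check out. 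What each approach buys: yours is elementary and self-contained, makes freeness transparent as a ninth-power-type residue condition on where \(\zeta\) lands, and follows the same explicit-generator philosophy the paper uses computationally in Example \ref{xpl:Codim1Cmpl}; the paper's counting argument avoids any explicit generator and, more importantly, is the template that generalizes verbatim to the Regulator Quotient Criterion of Corollary \ref{cor:Codim1Real}, where the unit index \((U:U\cap\mathcal{O}_q)\) is no longer constant and must be measured by regulator quotients. Two smaller remarks: in (3) your deduction that \(V_3(c)=V_3(3)=H\) for irregular \(c\), via \(R_c\subseteq R_3\) and the second inequality (2.5.1), spells out what the paper leaves implicit in applying Theorem \ref{thm:Codim1Irr}; and in (4) you route the all-free case through Theorem \ref{thm:Codim0}, which is legitimate and even slightly more careful than the paper's blanket claim --- note that formula (3.3.1) read formally at \(v=0\) gives \(\frac{1}{3}\bigl(2^{-1}-(-1)^{-1}\bigr)=\frac{1}{2}\) and hence degenerates exactly to \(3^{\varrho+\omega}2^{u-1}\), i.e.\ to (3.2.1) with \(\tau=u\), so the two readings agree.
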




\begin{proof}

\begin{enumerate}

\item
Since the discriminant \(d=-3\) of \(K\)
satisfies \(d\equiv 5\pmod{8}\) and \(d\equiv -3\pmod{9}\),
the inert prime \(q=2\equiv -1\pmod{3}\) and the ramified prime \(q=3\)
are \(3\)-admissible conductors over \(K\).
For any other prime \(q\notin\lbrace 2,3\rbrace\), we have the Legendre symbol
\(\left(\frac{d}{q}\right)=\left(\frac{-3}{q}\right)
=\left(\frac{-1}{q}\right)\cdot\left(\frac{3}{q}\right)
=(-1)^{\frac{q-1}{2}}\cdot(-1)^{\frac{3-1}{2}\cdot\frac{q-1}{2}}\cdot\left(\frac{q}{3}\right)
=(-1)^{q-1}\cdot\left(\frac{q}{3}\right)
=\left(\frac{q}{3}\right)
\equiv q\pmod{3}\),
whence \(q\) is a \(3\)-admissible conductor over \(K\).

\item
The primitive third roots of unity are given by
\(\zeta=\frac{1}{2}(-1+\sqrt{-3})\) and \(\zeta^2=\frac{1}{2}(-1-\sqrt{-3})\).
They are not contained in any suborder \(\mathcal{O}_f\) with \(f>1\)
of the maximal order \(\mathcal{O}\) of \(K\),
whereas \(\zeta^3=1\in\mathcal{O}_f\) for each \(f\ge 1\).
Consequently,
the index of ring units modulo \(f\) is \((U:U\cap\mathcal{O}_f)=3\) for \(f>1\).
Now let \(q\) be a regular \(3\)-admissible prime conductor over \(K\),
that is, either \(q=3\), \(t=0\), \(w=1\), or \(q\ne 3\), \(t=1\), \(w=0\).
Since \(\varrho_3=0\), Proposition
\ref{prp:RingClRank}
gives the \(3\)-rank of the ring class group modulo \(q\) of \(K\),
\[
\varrho_{q,3}=\varrho_3+t+w-\delta_3(q)=1-\delta_3(q)=
\begin{cases}
1 & \text{ if } \delta_3(q)=0, \\
0 & \text{ if } \delta_3(q)=1.
\end{cases}
\]
On the other hand,
the ring class number modulo \(q\) of an arbitrary number field \(K\),
\[\lvert\mathcal{C}_q\rvert=\lvert\mathcal{C}\rvert\cdot\frac{\varphi_K(q\mathcal{O})/\varphi(q)}{(U:U\cap\mathcal{O}_q)},\]
has been determined in
\cite[pp. 50--52]{Ma1}.
Since the ordinary class number of \(K=\mathbb{Q}(\sqrt{-3})\) is \(\lvert\mathcal{C}\rvert=1\)
and \((U:U\cap\mathcal{O}_q)=3\),
we obtain the following chain of equivalences
\[\delta_3(q)=0\iff\varrho_{q,3}=1\iff v_3(\lvert\mathcal{C}_q\rvert)\ge 1\iff v_3(E(q))\ge 2,\]
where \(E(q)\) denotes the quotient of the Euler totient numbers \(\varphi_K(q\mathcal{O})/\varphi(q)\).
According to the proof of Proposition
\ref{prp:PrimResCl},
\[
v_3(E(q))=
\begin{cases}
v_3(q-1) & \text{ for } q\equiv +1\pmod{3}, \\
v_3(q+1) & \text{ for } q\equiv -1\pmod{3}, \\
v_3(3)=1 & \text{ for } q=3.
\end{cases}
\]
Consequently,
\(\delta_3(q)=0\) if and only if \(v_3(q\pm 1)\ge 2\), that is \(q\equiv\pm1\pmod{9}\).

\item
By (2), we conclude that \(\delta_3(3)=1\), since \(v_3(E(3))=1<2\).

\item
This follows immediately from (2).

\end{enumerate}

\end{proof}



\begin{example}
\label{xpl:Codim1Cyclo}

The sextuplets in Table
\ref{tbl:FakeQuintuplets},
erroneously announced as quintuplets in
\cite[Table 5.2, p. 321]{FgWl},
have shown that multiplicity \(m_3(d,c)=5\)
is impossible over a complex quadratic base field \(K\)
of \(3\)-class rank \(\varrho_3=\sigma_3=0\),
since any conductor \(c\) is necessarily free.
However,
the Eisenstein cyclotomic field \(K=\mathbb{Q}(\sqrt{-3})\)
with \(\varrho_3=0\), \(\sigma_3=1\),
admits restrictive \(3\)-admissible conductors,
as we have seen in Corollary
\ref{cor:Codim1Cyclo}.
If we take the five smallest restrictive prime conductors
\(2,3,5,7,11\) over \(K\)
and form their product \(c=2\,310\),
then we have \(\varrho=0\), \(\omega=0\), \(u=0\), \(v=t+w=5\)
in formula (3.3.1) and the restrictive factor
\(\frac{1}{p}\left((p-1)^{v-1}-(-1)^{v-1}\right)\),
which is tabulated for \(p=3\) in column \(u=0\) of the table
\cite[p. 840]{Ma3},
takes the rather exotic multiplicity
\(m_3(-3,2\,310)=\frac{1}{3}\left(2^{5-1}-(-1)^{5-1}\right)=\frac{15}{3}=5\)
as its value.
In
\cite[Example 5, p. 835]{Ma3}
the cubefree normalized radicands \(n\in\lbrace 770,3\,850,7\,700,10\,780,16\,940\rbrace\),
of the corresponding \textit{actual quintuplet} of pure cubic fields \(L=\mathbb{Q}(\root 3\of{n})\)
are given. 
The value of the common discriminant \(d_L=-16\,008\,300\) is minimal
with multiplicity \(5\) among all complex cubic fields.
Since the radicands \(n\) are all of Dedekind's first kind
\(n\not\equiv\pm 1\pmod{9}\) and additionally satisfy \(3\nmid n\),
the counter \(v=4\) of the prime divisors of \(n\) in
\cite{Ma3}
is smaller than our counter \(v=5\) of prime divisors of the conductor \(c\).

\end{example}



\begin{corollary}[Regulator Quotient Criterion]
\label{cor:Codim1Real}

Let \(p\ge 3\) be an odd prime and
\(K\) be a real quadratic field
with discriminant \(d>0\) and
ordinary \(p\)-class rank \(\varrho_p=0\).
Then \(K\) has modified \(p\)-class rank \(\sigma_p=1\)
and the following characterization of
\(p\)-admissible conductors \(q\) over \(K\) holds.

\begin{enumerate}

\item
A prime \(q\), or the prime power \(q=p^2\),
is a free regular conductor over \(K\), \(\delta_p(q)=0\), if and only if
the quotient of the regulator \(\mathrm{R}(q)\) of the suborder \(\mathcal{O}_q\)
by the regulator \(\mathrm{R}(1)\) of the maximal order \(\mathcal{O}\) of \(K\)
satisfies the condition
\[v_p(\mathrm{R}(q)/\mathrm{R}(1))<v_p(E(q)),\]
where
\[
v_p(E(q))=
\begin{cases}
v_p(q-1) & \text{ for } q\equiv +1\pmod{p},\ \left(\frac{d}{q}\right)=+1, \\
v_p(q+1) & \text{ for } q\equiv -1\pmod{p},\ \left(\frac{d}{q}\right)=-1, \\
1        & \text{ for } q=p,\ p\mid d, \\
1        & \text{ for } q=p^2,\ \left(\frac{d}{p}\right)=\pm 1.
\end{cases}
\]

\item
If \(p=3\) and
\(K\) has discriminant \(d\equiv -3\pmod{9}\)
and thus enables irregular \(3\)-admissible conductors,
then the following criteria hold for the irregular prime power conductor \(q=3^2=9\).
\[\delta_3(9)=0 \iff v_3(\mathrm{R}(9)/\mathrm{R}(1))=0,\]
\[\delta_3(9)=1 \iff v_3(\mathrm{R}(9)/\mathrm{R}(1))=1.\]

\end{enumerate}

\end{corollary}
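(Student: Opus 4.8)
The plan is to reduce both assertions to the class number formula for orders together with the $p$-ring class rank formula of Theorem \ref{prp:RingClRank}, proceeding exactly as in the proof of Corollary \ref{cor:Codim1Cyclo} but tracking the regulator instead of a fixed unit index. First I would record the preliminaries. Since $d>0$, Definition \ref{dfn:ModifClRk} gives $\sigma_p=\varrho_p+\dim_{\mathbb{F}_p}(U/U^p)=0+1=1$, the fundamental unit $\eta>1$ accounting for the single extra dimension, so $V_p\simeq U/U^p$ is one-dimensional. Because $\varrho_p=0$, the ordinary class number satisfies $v_p(\lvert\mathcal{C}\rvert)=0$. For each conductor under consideration the Euler totient quotient $E(q)=\varphi_K(q\mathcal{O})/\varphi(q)$ is read off from the local structures already established in the proof of Proposition \ref{prp:PrimResCl}: a split $q\equiv+1\pmod p$ gives $E(q)=q-1$, an inert $q\equiv-1\pmod p$ gives $E(q)=q+1$, a ramified $q=p$ gives $E(p)=p$, and $q=p^{2}$ with $\left(\frac{d}{p}\right)=\pm1$ gives $E(p^{2})=(p\mp1)p$; these yield precisely the four values of $v_p(E(q))$ listed in the statement.

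The one genuinely new ingredient is the translation of the unit index into a regulator quotient. As $K$ is real quadratic, $U=\langle-1\rangle\times\langle\eta\rangle$ and the ring units are $U\cap\mathcal{O}_q=\langle-1\rangle\times\langle\eta^{f}\rangle$, where $f=(U:U\cap\mathcal{O}_q)$ is the least positive exponent with $\eta^{f}\in\mathcal{O}_q$; since the torsion is unchanged, the regulator scales linearly, $\mathrm{R}(q)=f\cdot\mathrm{R}(1)$, whence
\[v_p\bigl((U:U\cap\mathcal{O}_q)\bigr)=v_p\bigl(\mathrm{R}(q)/\mathrm{R}(1)\bigr).\]
Substituting this into the order class number formula $\lvert\mathcal{C}_q\rvert=\lvert\mathcal{C}\rvert\cdot E(q)/(U:U\cap\mathcal{O}_q)$ from \cite[pp. 50--52]{Ma1} and using $v_p(\lvert\mathcal{C}\rvert)=0$ produces the master identity
\[v_p(\lvert\mathcal{C}_q\rvert)=v_p(E(q))-v_p\bigl(\mathrm{R}(q)/\mathrm{R}(1)\bigr).\]

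For part (1) the conductor is regular with $t+w=1$, so Theorem \ref{prp:RingClRank} gives $\varrho_{q,p}=t+w-\delta_p(q)=1-\delta_p(q)$, and the second inequality (2.5.1) confines $\delta_p(q)$ to $\{0,1\}$. Here $\varrho_{q,p}\ge 1$ is equivalent to $p\mid\lvert\mathcal{C}_q\rvert$, i.e. $v_p(\lvert\mathcal{C}_q\rvert)\ge 1$, so the master identity converts the chain
\[\delta_p(q)=0\iff\varrho_{q,p}=1\iff v_p(\lvert\mathcal{C}_q\rvert)\ge 1\]
into the stated criterion, because for integers the condition $v_p(\mathrm{R}(q)/\mathrm{R}(1))<v_p(E(q))$ is the same as $v_p(E(q))-v_p(\mathrm{R}(q)/\mathrm{R}(1))\ge 1$. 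No finer information about the $p$-part of $\mathcal{C}_q$ is needed, which keeps part (1) elementary.

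The main obstacle is part (2), where $q=9$ is irregular with $w=2$, so $\varrho_{9,3}=2-\delta_3(9)$ may equal $1$ or $2$; a mere divisibility dichotomy no longer suffices, since I must now match the $3$-\emph{rank} $\varrho_{9,3}$ against the $3$-\emph{valuation} of the order. The resolution is that for $d\equiv-3\pmod 9$ the irregular local structure from the proof of Proposition \ref{prp:PrimResCl} gives $U(\mathcal{O}/9\mathcal{O})\simeq C(2)\times C(3)\times C(3)\times C(3)$, whose $3$-part is $3$-elementary; since $\varrho_3=0$ forces $\mathrm{Syl}_3(\mathcal{C})=1$, the standard exact sequence relating the ring class group of the order to that of the maximal order exhibits $\mathrm{Syl}_3(\mathcal{C}_9)$ as a subquotient of $(\mathcal{O}/9\mathcal{O})^\times$, hence itself $3$-elementary, so that rank and valuation coincide. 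Using $v_3(E(9))=v_3(9)=2$, the master identity then reads
\[\varrho_{9,3}=v_3(\lvert\mathcal{C}_9\rvert)=v_3(E(9))-v_3\bigl(\mathrm{R}(9)/\mathrm{R}(1)\bigr)=2-v_3\bigl(\mathrm{R}(9)/\mathrm{R}(1)\bigr),\]
and comparison with $\varrho_{9,3}=2-\delta_3(9)$ forces $\delta_3(9)=v_3(\mathrm{R}(9)/\mathrm{R}(1))$. Finally the bound $\delta_3(9)\le\sigma_3=1$ confines both sides to $\{0,1\}$, which is exactly the pair of equivalences claimed.
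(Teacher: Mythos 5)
Your proposal is correct and follows essentially the same route as the paper: Theorem \ref{prp:RingClRank} converts \(\delta_p(q)\) into the ring class rank, the ring class number formula from \cite[pp. 50--52]{Ma1} with \(v_p(\lvert\mathcal{C}\rvert)=0\) yields the valuation identity, the unit index \((U:U\cap\mathcal{O}_q)\) is identified with \(\mathrm{R}(q)/\mathrm{R}(1)\), and in part (2) the bound \(\delta_3(9)\le\sigma_3=1\) together with the \(3\)-elementary rank-\(2\) structure of \(U(\mathcal{O}/9\mathcal{O})\bigm/U(\mathbb{Z}/9\mathbb{Z})\) from Proposition \ref{prp:PrimResCl} lets rank and valuation coincide. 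Your only additions are explicit justifications the paper merely asserts, namely \(\mathrm{R}(q)=(U:U\cap\mathcal{O}_q)\cdot\mathrm{R}(1)\) via \(\eta^f\) and the subquotient argument making \(\mathrm{Syl}_3(\mathcal{C}_9)\) elementary abelian.
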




\begin{proof}

The proof uses similar techniques for an arbitrary prime \(p\ge 3\)
as the proof of Corollary
\ref{cor:Codim1Cyclo}, (2)
for \(p=3\).

\begin{enumerate}

\item
Let \(q\) be a regular \(p\)-admissible prime conductor over \(K\),
that is, either \(q=p\), \(t=0\), \(w=1\), or \(q\ne p\), \(t=1\), \(w=0\).
Since it is assumed that \(\varrho_p=0\), Proposition
\ref{prp:RingClRank}
gives the \(p\)-rank,
\[
\varrho_{q,p}=\varrho_p+t+w-\delta_p(q)=1-\delta_p(q)=
\begin{cases}
1 & \text{ if } \delta_p(q)=0, \\
0 & \text{ if } \delta_p(q)=1, \\
\end{cases}
\]
of the ring class group modulo \(q\) of \(K\).
Again, we use the formula for the ring class number modulo \(q\) of \(K\),
\[\lvert\mathcal{C}_q\rvert=\lvert\mathcal{C}\rvert\cdot\frac{\varphi_K(q\mathcal{O})/\varphi(q)}{(U:U\cap\mathcal{O}_q)},\]
which was proved in
\cite[pp. 50--52]{Ma1}.
Since the ordinary class number \(\lvert\mathcal{C}\rvert\) of \(K\) is supposed to be coprime to \(p\),
we obtain the following chain of equivalences
\[\delta_p(q)=0\iff\varrho_{q,p}=1\iff v_p(\lvert\mathcal{C}_q\rvert)\ge 1\iff
v_p(E(q))>v_p((U:U\cap\mathcal{O}_q)),\]
where \(E(q)\) denotes the quotient \(\varphi_K(q\mathcal{O})/\varphi(q)\), as before.
The index \((U:U\cap\mathcal{O}_q)\) of the unit group of the suborder \(\mathcal{O}_q\)
in the unit group of the maximal order \(\mathcal{O}\) of \(K\)
can be expressed as the quotient of the regulator \(\mathrm{R}(q)\) of \(\mathcal{O}_q\)
by the regulator \(\mathrm{R}(1)\) of \(\mathcal{O}\) resp. \(K\).
According to the proof of Proposition
\ref{prp:PrimResCl},
\[
v_p(E(q))=
\begin{cases}
v_p(q-1) & \text{ for } q\equiv +1\pmod{p},\ \left(\frac{d}{q}\right)=+1, \\
v_p(q+1) & \text{ for } q\equiv -1\pmod{p},\ \left(\frac{d}{q}\right)=-1, \\
v_p(p)=1 & \text{ for } q=p,\ p\mid d, \\
v_p(p)=1 & \text{ for } q=p^2,\ \left(\frac{d}{p}\right)=\pm 1.
\end{cases}
\]

\item
Let \(p=3\), \(d\equiv -3\pmod{9}\), and assume that
\(q=3^2=9\) is the irregular \(3\)-admissible prime power conductor over \(K\),
that is, \(t=0\), \(w=2\).
Since \(\varrho_3=0\), Proposition
\ref{prp:RingClRank}
gives the \(3\)-rank of the ring class group modulo \(9\) of \(K\),
\[
\varrho_{9,3}=\varrho_3+t+w-\delta_3(9)=2-\delta_3(9)=
\begin{cases}
2 & \text{ if } \delta_3(9)=0, \\
1 & \text{ if } \delta_3(9)=1, \\
0 & \text{ if } \delta_3(9)=2. \\
\end{cases}
\]
The last case cannot occur, since \(\delta_3(9)\le\sigma_3=1\), according to the second inequality (2.5.1).
By the formula for the ring class number modulo \(9\) of \(K\),
\[\lvert\mathcal{C}_9\rvert=\lvert\mathcal{C}\rvert\cdot\frac{\varphi_K(9\mathcal{O})/\varphi(9)}{(U:U\cap\mathcal{O}_9)},\]
where the ordinary class number \(\lvert\mathcal{C}\rvert\) of \(K\) is supposed to be coprime to \(3\),
we obtain the following two series of equivalences
\[\delta_3(9)=0\iff\varrho_{9,3}=2\iff v_3(\lvert\mathcal{C}_9\rvert)=v_3(E(9))-v_3((U:U\cap\mathcal{O}_9))=2\iff
v_3\left(\frac{\mathrm{R}(9)}{\mathrm{R}(1)}\right)=0,\]
\[\delta_3(9)=1\iff\varrho_{9,3}=1\iff v_3(\lvert\mathcal{C}_9\rvert)=v_3(E(9))-v_3((U:U\cap\mathcal{O}_9))=1\iff
v_3\left(\frac{\mathrm{R}(9)}{\mathrm{R}(1)}\right)=1,\]
where \(v_3(E(9))=v_3(\varphi_K(9\mathcal{O})/\varphi(9))=2\)
and the factor group \(U(\mathcal{O}/9\mathcal{O})\Bigm/U(\mathbb{Z}/9\mathbb{Z})\)
is a \(3\)-elementary abelian group of rank \(2\),
by the proof of Proposition
\ref{prp:PrimResCl}.

\end{enumerate}

\end{proof}



\begin{example}
\label{xpl:Codim1Real}

The regulator quotient criterion (RQC) can be implemented easily
with the aid of the very efficient continued fraction algorithm (CFA).
It has useful applications for various problems in algebraic number theory.

\begin{enumerate}

\item
First consider \(p=3\) and
fix a prime conductor, e. g. \(q=2\equiv -1\pmod{3}\).
Then we know that \(q\) is \(3\)-admissible over all
real quadratic fields \(K=\mathbb{Q}(\sqrt{d})\) with discriminant \(d\equiv 5\pmod{8}\)
where the prime \(2\) remains inert.
We use the sequence \((5,13,21,29,\ldots)\) of these discriminants \(d\)
as input of the CFA and extract the cases
with regulator quotient \(Q=\frac{\mathrm{R}(2)}{\mathrm{R}(1)}=1\)
as a subsequence \((37,101,141,\ldots)\).
With respect to these filtered discriminants,
\(q=2\) is free with \(3\)-defect \(\delta_3(2)=0\),
provided that \(K\) has \(3\)-class rank \(\varrho_3=0\).
In this way, we have produced
\cite[Table A, pp. 53--54]{Ma1},
where the index of unit groups \(Q=(U:U\cap\mathcal{O}_2)\in\lbrace 1,3\rbrace\) was investigated,
whereas in
\cite[Part I, Section 1, 1)]{Ma4}
our intention was to construct all totally real cubic fields \(L\) with discriminant \(d_L<2\cdot 10^5\).
Those with conductor \(q=2\) have discriminants \(d_L=q^2d=4d\),
starting with \(d_L\in\lbrace 148,404,564,\ldots\rbrace\).\\
It should be emphasized, that this technique fails for \(\varrho_3\ge 1\),
as shown by
\cite[Table B, pp. 55--56]{Ma1},
where \(q=2\) is restrictive for the discriminants \(d\) in the sequence \((1\,765,2\,101,\ldots)\)
and free for \((7\,053,10\,333,11\,965,\ldots)\).
In
\cite[Part III, \(\varrho_3=1\), Section 1, 1)]{Ma4}
the corresponding cubic discriminants
\(d_L\in\lbrace 28\,212,41\,332,47\,860,\ldots\rbrace\) are listed.

\item
Now let \(p=5\),
fix a quadratic base field, e.g. \(K=\mathbb{Q}(\sqrt{5})\), with \(5\)-class rank \(\varrho_5=0\),
and vary the \(5\)-admissible prime conductors \(q\) over \(K\).
For any prime \(q\equiv\pm 1\pmod{5}\), we have the Legendre symbol
\(\left(\frac{d}{q}\right)=\left(\frac{5}{q}\right)
=(-1)^{\frac{5-1}{2}\cdot\frac{q-1}{2}}\cdot\left(\frac{q}{5}\right)
=\left(\frac{q}{5}\right)
\equiv +1\pmod{5}\),
whence the \(5\)-admissible prime conductors over \(K\) are \(q=5\)
and \(q\equiv +1\pmod{10}\) only, that is the sequence \((5,11,31,41,61,\ldots)\).
We apply the RQC to determine the free prime conductors \(q\) with \(5\)-defect \(\delta_5(q)=0\)
and find that the subsequence \((211,281,421,461,\ldots)\) starts with \(q=211\),
having an associated totally real quintic field \(L\) of discriminant \(d_L=(211^2\cdot 5)^2=222\,605^2\).
Products \(c\) of the restrictive smaller prime conductors are convenient
to display the restrictive factor
\(\frac{1}{p}\left((p-1)^{v-1}-(-1)^{v-1}\right)\)
of the multiplicity \(m_p(d,c)\) in formula (3.3.1) for \(p=5\).
We have multiplets of\\
a single quintic field for \(c=5\cdot 11\), \(v=2\),\\
\(3\) fields for \(c=5\cdot 11\cdot 31\), \(v=3\),\\
\(13\) fields for \(c=5\cdot 11\cdot 31\cdot 41\), \(v=4\), and\\
\(51\) fields for \(c=5\cdot 11\cdot 31\cdot 41\cdot 61\), \(v=5\).

\item
As a counterpart to Example
\ref{xpl:Codim1Cyclo},
we present the minimal discriminant \(d_L\)
of an \textit{actual quintuplet} of totally real cubic fields \(L\).
It is due to a personal communication by Belabas,
who used his CRF algorithm 
\cite{Be1,Be2}
and PARI
\cite{PARI}
to determine 
\(d_L=13\,302\,897\,300\) with conductor \(c=3^2\cdot 2\cdot 5\cdot 7\cdot 11\),
composed by \(v=5\) restrictive prime conductors
over the real quadratic field with discriminant \(d=277\equiv +1\pmod{3}\) and \(3\)-class rank \(\varrho_3=0\).
Previously we only knew the non-minimal case
\(d_L=89\,407\,866\,420\) with \(c=3^2\cdot 2\cdot 17\cdot 19\cdot 23\) over \(d=5\equiv -1\pmod{3}\).
Of course, this shows impressively that
a small conductor \(c\) is suited better to minimize the expression \(d_L=c^2d\)
than a small discriminant \(d\).

\end{enumerate}

\end{example}



\section{Main Theorems on conductors of \(p\)-defect \(2\)}
\label{s:MainThm}

In sections
\ref{ss:UnRmf}
and
\ref{ss:FreeCond}
the \(p\)-admissible conductors \(c\) were free
having \(p\)-defect \(\delta_p(c)=0\) and \(p\)-ring space \(V_p(c)=V_p\) the entire space.
In section
\ref{ss:RstrCond}
we presented cases of restrictive conductors \(c\)
with \(p\)-defect \(\delta_p(c)=1\) and \(p\)-ring space \(V_p(c)=H\) a hyperplane.
Now we are going to extend our theory
to base fields \(K\) which admit \(p\)-admissible conductors \(c\)
of \(p\)-defect \(\delta_p(c)=2\)
having a \(p\)-ring space \(V_p(c)\) of codimension \(2\) in \(V_p\)
\cite{Ma5}.
For \(\delta_p(c)=3\) see
\cite{Ma6}.



\subsection{Regular conductors and irregular conductors with \(\delta_3(3)=0\)}
\label{ss:Codim2Reg}

\begin{theorem}
\label{thm:Codim2Reg}

Let \(p\ge 3\) be an odd prime
and \(K=\mathbb{Q}(\sqrt{d})\) be a quadratic number field
with modified \(p\)-class rank \(\sigma_p\ge 2\)
and discriminant \(d\).
Assume that \(c=p^eq_1\cdots q_t\) is a restrictive \(p\)-admissible conductor over \(K\)
with \(p\)-defect \(\delta_p(c)=2\).
Denote by \(H_i\), \(1\le i\le p+1\), the hyperplanes of the vector space \(V_p\)
which contain the \(p\)-ring space \(V_p(c)\)
and by \(n_i=\#\lbrace 1\le k\le\tau\mid V_p(q_k)=H_i\rbrace\) their occupation numbers,
for \(1\le i\le p+1\).\\
If \(c\) is either regular or irregular
with \(p=3\), \(d\equiv -3\pmod{9}\), \(e=2\), and \(\delta_3(3)=0\),
then the \(p\)-multiplicity of \(c\) with respect to \(d\) is given by
\[(4.1.1)\qquad m_p(d,c)=p^{\varrho+\omega}(p-1)^u\cdot\frac{1}{p^2}\left((p-1)^{v-1}+\sum_{i=1}^{p+1}\,(-1)^{v-n_i}(p-1)^{n_i}\right),\]
where \(\varrho=\varrho_p\) denotes the ordinary \(p\)-class rank of \(K\),
\(\omega=0\) if \(c\) is regular,
and \(\omega=1\) otherwise,
\(u=\#\lbrace 1\le k\le\tau\mid V_p(q_k)=V_p\rbrace\),
and \(v=\tau-u\).

\end{theorem}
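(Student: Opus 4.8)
The plan is to reduce the count to a Möbius inversion over the lattice of admissible divisors of $c$ and then to evaluate the resulting alternating sum by purely combinatorial means, exploiting the morphism property of $\psi:f\mapsto V_p(f)$.

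First I would record the fundamental enumeration: for every admissible divisor $f\mid c$, the number $A(f)$ of dihedral degree-$p$ extensions $N\vert K$ whose conductor over $K$ divides $f$ equals the number of cyclic degree-$p$ subextensions of the $p$-ring class field $\mathrm{F}_{f,p}(K)$, that is $A(f)=(p^{\varrho_{f,p}}-1)/(p-1)$, where $\varrho_{f,p}$ is supplied by Theorem~\ref{prp:RingClRank}. The extensions of conductor exactly $c$ are then isolated by inclusion--exclusion,
\[m_p(d,c)=\sum_{S\subseteq\{1,\dots,\tau\}}(-1)^{\tau-|S|}A(c_S),\qquad c_S=\prod_{k\in S}q_k .\]
Since $\varrho_{c_S,p}=\varrho+|S|-\delta_p(c_S)$ by Theorem~\ref{prp:RingClRank} while $\sum_{S}(-1)^{\tau-|S|}=0$, this collapses to
\[m_p(d,c)=\frac{p^{\varrho}}{p-1}\sum_{S}(-1)^{\tau-|S|}p^{\,|S|-\delta_p(c_S)} .\]

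The decisive simplification comes from the morphism property $V_p(c_S)=\bigcap_{k\in S}V_p(q_k)$, which converts $\delta_p(c_S)$ into a combinatorial quantity. Because $\sigma_p\ge 2$ and $\delta_p(q_k)\le 1$ for every prime (power) conductor occurring here, each $V_p(q_k)$ is either the whole space $V_p$ (the $u$ free primes) or one of the $p+1$ hyperplanes $H_i\supseteq V_p(c)$ (the restrictive primes, distributed by the occupation numbers $n_i$, so that $v=\sum_i n_i$). Two restrictive primes in distinct hyperplanes already cut down to $H_i\cap H_j=V_p(c)$, so $\delta_p(c_S)\in\{0,1,2\}$ according as the primes of $S$ meet zero, exactly one, or at least two of the hyperplanes. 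I would then factor the free primes out as $(p-1)^u$ and evaluate the restrictive sum by stratifying the configurations $(S_1,\dots,S_{p+1})$, $S_i\subseteq P_i$, into these three strata. Writing $T=\sum_{i=1}^{p+1}(-1)^{v-n_i}(p-1)^{n_i}$ and using $\sum_{\varnothing\ne S_i}(-1)^{n_i-|S_i|}p^{|S_i|}=(p-1)^{n_i}-(-1)^{n_i}$ together with $\prod_i(p-1)^{n_i}=(p-1)^v$, the three strata contribute $(-1)^v$, $p^{-1}(T-(p+1)(-1)^v)$, and $p^{-2}((p-1)^v+p(-1)^v-T)$, respectively. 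Collecting these, the $(-1)^v$ terms cancel and the restrictive sum equals $p^{-2}\bigl((p-1)^v+(p-1)T\bigr)$, which, after restoring the prefactor $p^{\varrho}(p-1)^{u}/(p-1)$, reproduces exactly the bracket in (4.1.1) with $\omega=0$.

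It remains to treat the irregular alternative. Here the critical prime occupies a chain of exponents $e\in\{0,1,2\}$ rather than a binary slot, so $\mu(9)=0$ leaves only the difference of the $e=2$ and $e=1$ contributions; since the prime divisor $3$ is free ($V_3(9)=V_3(3)=V_3$) the defect is unchanged and $A(9c')-A(3c')=p^{\,\varrho+1+|S|-\delta}$, which is precisely the clean factor $p^{\omega}$ with $\omega=1$ absorbing the surplus rank increment $w=2$. The same stratification then yields (4.1.1) with $\omega=1$. The hard part will be the combinatorial core of the preceding paragraph --- keeping the signs and the three-way stratification exact so that the spurious $(-1)^v$ contributions cancel --- together with the class-field-theoretic justification that $A(f)$ counts dihedral fields with the correct normalization and that inclusion--exclusion over admissible divisors genuinely isolates conductor exactly $c$.
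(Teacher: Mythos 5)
Your regular-case argument is correct and is, in substance, the paper's own route assembled in a self-contained way: your M\"obius inversion over the Boolean lattice of atoms of \(c\), combined with \(\varrho_{c_S,p}=\varrho+\lvert S\rvert-\delta_p(c_S)\) from Theorem~\ref{prp:RingClRank}, reproduces exactly the starting formula (5.3.1) that the paper quotes from \cite[Thm.~3.2]{Ma3}, and your three-way stratification by which hyperplanes the restrictive primes of \(S\) meet is arithmetically sound: \((-1)^v+p^{-1}\bigl(T-(p+1)(-1)^v\bigr)+p^{-2}\bigl((p-1)^v+p(-1)^v-T\bigr)=p^{-2}\bigl((p-1)^v+(p-1)T\bigr)\), which after the prefactor \(p^{\varrho}(p-1)^u/(p-1)\) gives (4.1.1) with \(\omega=0\). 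This replaces the paper's detour through \(s\)-occupation numbers (Propositions~\ref{prp:Combinatorics} and \ref{prp:InnerSum}(1)) and the binomial theorem by a direct evaluation, which is arguably cleaner.

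The genuine gap is in the irregular alternative. The theorem's hypothesis there is only \(\delta_3(3)=0\); it does \emph{not} imply \(V_3(9)=V_3\). Two sub-cases must be covered: \(\delta_3(9)=0\) (the atom \(q_{t+1}=9\) is free and counts toward \(u\)) and \(\delta_3(9)=1\) with \(V_3(9)=H_{i_0}\) a hyperplane (the atom \(9\) is restrictive and counts toward \(n_{i_0}\) --- see the Remark following the theorem and Proposition~\ref{prp:InnerSum}(3)). You silently assume the first by writing \(V_3(9)=V_3(3)=V_3\), and your key identity \(A(9c_S)-A(3c_S)=3^{\varrho+1+\lvert S\rvert-\delta_3(c_S)}\) fails in the second: whenever adjoining \(3^2\) raises the defect, i.e.\ \(\delta_3(9c_S)=\delta_3(3c_S)+1\) (which happens exactly when \(V_3(c_S)=V_3\) or \(V_3(c_S)=H_i\) with \(i\ne i_0\)), the difference \(A(9c_S)-A(3c_S)\) is \emph{zero}, so entire strata of your sum drop out instead of contributing the uniform factor \(3^{\omega}\). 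Re-summing the surviving terms and matching them against (4.1.1) requires the additional bookkeeping \(n_{i_0}=\tilde n_{i_0}+1\) and \(n_i=\tilde n_i\) for \(i\ne i_0\) (since the theorem's occupation numbers \(n_i\) count all \(\tau\) atoms including \(q_{t+1}=9\)); this is precisely the content of the paper's Proposition~\ref{prp:InnerSum}(3) and part (3) of its proof of the theorem, and it is not obtained by running the ``same stratification'' as in the free sub-case. Until this sub-case is worked out, your proof establishes (4.1.1) only for regular \(c\) and for irregular \(c\) with \(\delta_3(9)=0\).
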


\begin{remark}

For \(p=3\) and \(d\equiv -3\pmod{9}\),
both irregular cases of conductors \(c\) with \(\omega=1\) and free prime divisor \(3\mid c\) having \(V_3(3)=V_3\)
are integrated into the regular formula (4.1.1) without difficulty,
using the supplementary factor \(3^{\omega}\) of the \(3\)-multiplicity.

In the first case, \(V_3(9)=V_3\),
the prime power \(q_{t+1}=9\) is free and contributes to the position counter \(u\ge 1\).
In the second case of a hyperplane \(V_3(9)=H_{i_0}\), for some \(1\le i_0\le p+1\),
it is restrictive and counted by the occupation number \(n_{i_0}\ge 1\) and thus also by \(v=\sum_{i=1}^{p+1}\,n_i\ge 1\).

\end{remark}



For the actual application of formula (4.1.1), in the case \(p=3\), Table
\ref{tbl:RstrMult}
provides the values taken by the \textit{restrictive part}
\[R(v,n_1,\ldots,n_4)=\frac{1}{3^2}\left(2^{v-1}+\sum_{i=1}^{4}\,(-1)^{v-n_i}2^{n_i}\right)\]
in the \textit{trichotomy} of the \(p\)-multiplicity formula
\[m_p(d,c)=U(\varrho_p)\cdot F(u,\omega)\cdot R(v,n_1,\ldots,n_{p+1})\]
with \textit{unramified part} \(U(\varrho)=p^{\varrho}\)
and \textit{free part} \(F(u,\omega)=p^{\omega}(p-1)^u\).

\renewcommand{\arraystretch}{1.0}

\begin{table}[ht]
\caption{Restrictive multiplicity factor in dependence on positions of \(3\)-ring spaces}
\label{tbl:RstrMult}
\begin{center}
\begin{tabular}{|c||c|c|c|c||c|c|}
\hline
  \(v\) & \(n_1\) & \(n_2\) & \(n_3\) & \(n_4\) & \(\delta_3(c)\) & \(R(v,n_1,\ldots,n_4)\) \\
\hline
  \(0\) &   \(0\) &   \(0\) &   \(0\) &   \(0\) &           \(0\) &         \(\frac{1}{2}\) \\
\hline
  \(1\) &   \(1\) &   \(0\) &   \(0\) &   \(0\) &           \(1\) &                   \(0\) \\
\hline
  \(2\) &   \(1\) &   \(1\) &   \(0\) &   \(0\) &           \(2\) &                   \(0\) \\
        &   \(2\) &   \(0\) &   \(0\) &   \(0\) &           \(1\) &                   \(1\) \\
\hline
  \(3\) &   \(1\) &   \(1\) &   \(1\) &   \(0\) &           \(2\) &                   \(1\) \\
        &   \(2\) &   \(1\) &   \(0\) &   \(0\) &           \(2\) &                   \(0\) \\
        &   \(3\) &   \(0\) &   \(0\) &   \(0\) &           \(1\) &                   \(1\) \\
\hline
  \(4\) &   \(1\) &   \(1\) &   \(1\) &   \(1\) &           \(2\) &                   \(0\) \\
        &   \(2\) &   \(1\) &   \(1\) &   \(0\) &           \(2\) &                   \(1\) \\
        &   \(2\) &   \(2\) &   \(0\) &   \(0\) &           \(2\) &                   \(2\) \\
        &   \(3\) &   \(1\) &   \(0\) &   \(0\) &           \(2\) &                   \(0\) \\
        &   \(4\) &   \(0\) &   \(0\) &   \(0\) &           \(1\) &                   \(3\) \\
\hline
  \(5\) &   \(2\) &   \(1\) &   \(1\) &   \(1\) &           \(2\) &                   \(2\) \\
        &   \(2\) &   \(2\) &   \(1\) &   \(0\) &           \(2\) &                   \(1\) \\
        &   \(3\) &   \(1\) &   \(1\) &   \(0\) &           \(2\) &                   \(3\) \\
        &   \(3\) &   \(2\) &   \(0\) &   \(0\) &           \(2\) &                   \(2\) \\
        &   \(4\) &   \(1\) &   \(0\) &   \(0\) &           \(2\) &                   \(0\) \\
        &   \(5\) &   \(0\) &   \(0\) &   \(0\) &           \(1\) &                   \(5\) \\
\hline
\end{tabular}
\end{center}
\end{table}



\subsection{Irregular conductors with \(\delta_3(3)=1\)}
\label{ss:Codim2Irr}

\begin{theorem}
\label{thm:Codim2Irr}

Let \(K=\mathbb{Q}(\sqrt{d})\) be a quadratic number field
with modified \(3\)-class rank \(\sigma_3\ge 2\)
and discriminant \(d\equiv -3\pmod{9}\).
Assume that \(c=3^2q_1\cdots q_t\) is a restrictive \(3\)-admissible conductor over \(K\)
with \(3\)-defect \(\delta_3(c)=2\).
Let \(c\) be irregular
with \(\delta_3(3)=1\), whence the \(3\)-ring space \(V_3(3)\) coincides with some hyperplane \(H\) containing \(V_3(c)\)
in the vector space \(V_3\).
Denote by \(n=\#\lbrace 1\le k\le\tau\mid V_3(q_k)=H\rbrace\) the occupation number of \(H\).\\
Then the \(3\)-multiplicity of \(c\) with respect to \(d\) is given by the degenerate formula
\[(4.2.1)\qquad m_3(d,c)=3^{\varrho}\cdot 2^{u+n}\cdot\frac{1}{3}\left(2^{v-n-1}-(-1)^{v-n-1}\right),\]
where \(\varrho=\varrho_3\) denotes the \(3\)-class rank of \(K\),
\(u=\#\lbrace 1\le k\le\tau\mid V_3(q_k)=V_3\rbrace\),
and \(v=\tau-u\).

\end{theorem}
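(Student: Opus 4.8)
The plan is to evaluate \(m_3(d,c)\) by \emph{M\"obius inversion over the divisor lattice} of the conductor \(c=3^2q_1\cdots q_t\), following the scheme used for the regular codimension-\(2\) case in Theorem~\ref{thm:Codim2Reg}, but now retaining the intermediate divisor \(3\mid 9\) that is characteristic of the irregular situation. Since every cyclic relative extension of degree \(3\) inside the ring class field \(\mathrm{F}_{f,3}(K)\) is dihedral over \(\mathbb{Q}\), the number of fields \(N\mid K\) with conductor dividing \(f\) equals the number \(\tfrac{3^{\varrho_{f,3}}-1}{2}\) of hyperplanes of the \(\mathbb{F}_3\)-space \(\mathcal{C}_{f,3}\), where Theorem~\ref{prp:RingClRank}(1) gives \(\varrho_{f,3}=\varrho_3+t(f)+w(f)-\delta_3(f)\) with \(\delta_3(f)=\operatorname{codim}V_3(f)\) by Definition~\ref{dfn:RingSpace}. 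Hence
\[
m_3(d,c)=\sum_{f\mid c}\mu(f,c)\,\frac{3^{\varrho_{f,3}}-1}{2}=\frac{3^{\varrho_3}}{2}\sum_{f\mid c}\mu(f,c)\,3^{\,t(f)+w(f)-\delta_3(f)},
\]
the constant \(-1\) disappearing because \(\sum_{f\mid c}\mu(f,c)=0\) for \(c>1\), where \(\mu\) denotes the M\"obius function of the divisor lattice.

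The decisive new ingredient is the chain \(1\mid 3\mid 9\) in the \(3\)-part of the lattice. As \(9=3^2\) is not squarefree, \(\mu(1,9)=0\) while \(\mu(3,9)=-1\) and \(\mu(9,9)=+1\), so the term with \(3\)-part \(1\) is annihilated and \emph{every surviving summand is divisible by \(3\)}. The hypothesis \(\delta_3(3)=1\) then forces \(V_3(f)\subseteq V_3(3)=H\) throughout the sum, so the distinguished hyperplane \(H\) is activated in every term; this is exactly the degeneration mechanism foreshadowed in the Remark after Theorem~\ref{thm:Codim1Irr}. I would next factor along the coprime splitting \(f=f_3\prod_{k}q_k\). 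The \(3\)-part detaches as the scalar \(\sum_{f_3\in\{3,9\}}\mu(f_3,9)\,3^{w(f_3)}=-3+9=6\), while the remaining sum runs over subsets \(S\subseteq\{1,\dots,t\}\) with weight \(3^{\,|S|-\operatorname{codim}(H\cap W_S)}\), where \(W_S=\bigcap_{k\in S}V_3(q_k)\).

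The pencil of hyperplanes containing \(V_3(c)\) now controls the exponent. Relative to the permanent hyperplane \(H\), both a free prime (\(V_3(q_k)=V_3\)) and an \(H\)-prime (\(V_3(q_k)=H\)) leave \(\operatorname{codim}(H\cap W_S)=1\) unchanged and thus act quasi-freely, whereas a prime on any other hyperplane of the pencil meets \(H\) in \(V_3(c)\) and raises the codimension to \(2\). Consequently the \(q\)-sum splits into a free-type factor \(2^{\,u+n-1}\) (the \(u\) free primes together with the \(n-1\) genuine \(H\)-primes, once the \(3^2\)-component has been extracted into the scalar \(6\)) times a codimension-\(1\) type geometric sum in the \(v-n\) primes lying on the other hyperplanes. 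Collecting the scalar \(6\), the prefactor \(\tfrac{3^{\varrho_3}}{2}\) and the baseline \(3^{-1}\) — so that \(\tfrac12\cdot 6\cdot 3^{-1}=1\) absorbs the irregular factor \(3^{\omega}\) of Theorems~\ref{thm:Codim0}--\ref{thm:Codim2Reg} — together with the \(\tfrac13\) produced by the restrictive geometric sum, reproduces
\[
m_3(d,c)=3^{\varrho_3}\,2^{\,u+n}\cdot\tfrac13\bigl(2^{\,v-n-1}-(-1)^{\,v-n-1}\bigr).
\]

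The step I expect to be most delicate is the bookkeeping of the \(3^2\)-component against the occupation number \(n\): one must settle whether \(V_3(9)\) still equals \(H\) (so \(\delta_3(9)=1\)) or descends to the codimension-\(2\) axis (so \(\delta_3(9)=2\)), and verify in each case that both \(3\)-parts \(3\) and \(9\) genuinely contribute the same baseline hyperplane \(H\), which is what legitimizes detaching the scalar \(6\) and is precisely where the convention for counting \(q_{t+1}=3^2\) inside \(n\) must be fixed. The computation above carries out the representative case \(\delta_3(9)=1\); once the values \(\operatorname{codim}(H\cap W_S)\in\{1,2\}\) are tabulated as a function of \(S\) in the complementary case, the remainder is the routine binomial evaluation \(\sum_{i\ge 1}\binom{v-n}{i}(-1)^{\,v-n-i}3^{\,i}=2^{\,v-n}-(-1)^{\,v-n}\), which matches the claimed degenerate factor and confirms its place in the trichotomy \(U(\varrho_3)\,F(u,\omega)\,R(\dots)\) specialized from formula (4.1.1).
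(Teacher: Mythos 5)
Your proposal is correct in substance and, once unwound, performs the same computation as the paper, but packaged differently. The paper does not re-derive the master identity: it imports formula (5.4.1) --- which is exactly your M\"obius inversion \(\sum_{f\mid c}\mu(f,c)\,\tfrac{3^{\varrho_{f,3}}-1}{2}\), with the chain \(1\mid 3\mid 9\) killing the terms with trivial \(3\)-part and producing the characteristic difference \(\tfrac{1}{2}\bigl(3^{2-\delta_3(3^2q_{\Bbbk})}-3^{1-\delta_3(3q_{\Bbbk})}\bigr)\) --- from \cite[Thm.\ 3.3]{Ma3}, and then evaluates it through the intermediate layer of \(s\)-occupation numbers: Proposition \ref{prp:Combinatorics} encodes \(\#\lbrace\Bbbk\mid V_3(q_\Bbbk)=T\rbrace\) in binomial coefficients, Proposition \ref{prp:InnerSum} (4)--(5) condenses the inner sums to \(L_s(2)=\tfrac13\bigl\lbrack\binom{t}{s}+2\binom{u+\tilde n_1(H_{i_0})}{s}\bigr\rbrack\) resp.\ \(\tfrac13\bigl\lbrack\binom{t}{s}-\binom{u+n_1(H_{i_0})}{s}\bigr\rbrack\) via the tables of Figures \ref{fig:WildlyIrr} and \ref{fig:MaxRestIrr}, and the outer sum is closed by the binomial theorem. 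Your re-derivation of (5.4.1) from \(\varrho_{f,3}=\varrho_3+t(f)+w(f)-\delta_3(f)\) (Theorem \ref{prp:RingClRank}) is legitimate, granting the standing fact from \cite{Ma3} that ring class fields are generalized dihedral over \(\mathbb{Q}\), so index-\(3\) subgroups of \(\mathcal{C}_{f,3}\) biject onto the fields counted; your direct splitting of the \(q_k\) into free, \(H\)-, and other-pencil classes then replaces the \(L_s\)-bookkeeping by a one-step exponential-sum evaluation. I checked that in your case \(\delta_3(3^2)=1\) this reproduces \(3^{\varrho}2^{u+n}\cdot\tfrac13\bigl(2^{v-n-1}-(-1)^{v-n-1}\bigr)\) exactly. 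What you buy is independence from \cite{Ma3} and a shorter path; what the paper's machinery buys is uniformity, since the same Propositions simultaneously feed the non-degenerate Theorem \ref{thm:Codim2Reg}.

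The one concrete loose end is the complementary sub-case \(\delta_3(3^2)=2\), i.e.\ \(V_3(3^2)=V_3(c)\), which the theorem's hypotheses also allow and which the paper treats separately (Proposition \ref{prp:InnerSum} (5) and part (2) of the proof). There your ``scalar \(6\)'' does \emph{not} detach: one has \(\delta_3(3^2q_\Bbbk)=2\) for every \(\Bbbk\), while \(\delta_3(3q_\Bbbk)=\mathrm{codim}(H\cap W_S)\in\lbrace 1,2\rbrace\), so the subsets \(S\) with \(H\cap W_S=H\) cancel outright (\(3^{s}-3^{s}=0\)) instead of factoring out, and only subsets meeting the off-\(H\) pencil primes survive, each contributing \(2\cdot 3^{s-1}\). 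Moreover your closing binomial identity carries the wrong exponent for this case: since \(V_3(3^2)=V_3(c)\) puts \(q_{t+1}=3^2\) in neither \(u\) nor \(n\), only \(v-n-1\) primes lie on pencil hyperplanes different from \(H\), and the surviving sum is \(2^{u+n}\sum_{i\ge 1}\binom{v-n-1}{i}(-1)^{v-n-1-i}3^{i}=2^{u+n}\bigl(2^{v-n-1}-(-1)^{v-n-1}\bigr)\), which matches (4.2.1) directly; with your stated exponent \(v-n\) the factor would not match. You flagged exactly this bookkeeping as the delicate point, and fixing it as above completes the proof.
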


\begin{remark}

There are two cases of irregular conductors \(c=3^2q_1\cdots q_t\)
with restrictive prime divisor \(3\mid c\), that is \(\delta_3(3)=1\) and \(V_3(3)=H\).
In both cases we observe a \textit{degeneration} to a formula of effectively lower codimension,
similar to formula (3.3.1),
since the \(n\) prime divisors \(q_k\) belonging to the distinguished hyperplane \(H\) are acting quasi-free
such that actually effective values \(u_{\mathrm{eff}}=u+n\) and \(v_{\mathrm{eff}}=v-n\)
replace the position counters \(u\) and \(v\).

In the first case, \(V_3(9)=H\),
the restrictive prime power \(q_{t+1}=9\) is counted by \(n\ge 1\) and thus also by \(u_{\mathrm{eff}}\ge 1\).
In the second case of maximal \(3\)-defect \(\delta_3(9)=2\) and \(V_3(9)=V_3(c)\),
it yields a contribution to \(v_{\mathrm{eff}}\ge 1\).

\end{remark}



\section{Proof of the main theorems}
\label{s:MainProof}



\subsection{Lattice morphism \(\psi\)}
\label{ss:Morphism}

Let \(\mathbb{N}\) be the infinite divisor lattice of positive integers
and denote by \(\mathcal{S}\) the finite lattice of subspaces
of the \(\mathbb{F}_p\)-vector space \(V_p\)
of non-trivial generators of principal \(p\)th powers of ideals
in the quadratic field \(K\).



\begin{proposition}
\label{prp:Morphism}

The morphism
\(\psi:\mathbb{N}\to\mathcal{S}\), \(c\mapsto V_p(c)\),
which maps any positive integer \(c\) to the \(p\)-ring space modulo \(c\) of \(K\)
has the following properties.

\begin{enumerate}

\item
\(\psi\) maps the minimum of \(\mathbb{N}\) to the maximum of \(\mathcal{S}\),
\(V_p(1)=V_p\).

\item
\(\psi\) maps suprema in \(\mathbb{N}\) to infima in \(\mathcal{S}\), in particular
\[V_p(c_1\cdot c_2)=V_p(c_1)\cap V_p(c_2),\text{ for coprime integers }c_1,c_2.\]

\item
\(\psi\) is anti-monotonic, that is,
if \(c_1\mid c_2\), then \(V_p(c_2)\le V_p(c_1)\), for integers \(c_1,c_2\).

\item
All values of \(\psi\) are determined uniquely by
the images \(V_p(q)\) of primes \(q\in\mathbb{P}\setminus\lbrace p\rbrace\)
and by \(V_p(p)\) and \(V_p(p^2)\).\\
In particular, the \(p\)-ring space of a \(p\)-admissible conductor \(c=q_1\cdots q_{\tau}\) over \(K\)
is given by the intersection \(V_p(c)=V_p(q_1)\cap\cdots\cap V_p(q_{\tau})\).

\item
The primes \(q\in\mathbb{P}\setminus\lbrace p\rbrace\)
which are not \(p\)-admissible conductors over \(K=\mathbb{Q}(\sqrt{d})\)
and, in the case \(p\nmid d\), the critical prime \(q=p\),
are mapped to the maximum \(V_p(q)=V_p\) by \(\psi\).

\end{enumerate}

\end{proposition}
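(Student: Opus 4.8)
The plan is to fix a common modulus $f$ divisible by every conductor in sight and to work throughout inside the radicand group $K(f)/K(f)^p$, recalling that $V_p(c)=(I_p(f)\cap R_cK(f)^p)/K(f)^p$ with $R_c=\mathbb{Q}(c)S_c$. Three of the five assertions then fall out quickly. For (1), the modulus $c=1$ imposes no congruence, so $S_1$ comprises all numbers, $R_1K(f)^p=K(f)\supseteq I_p(f)$, and hence $V_p(1)=I_p(f)/K(f)^p=V_p$. For (3), if $c_1\mid c_2$ then every rational coprime to $c_2$ is coprime to $c_1$ and every number $\equiv 1\pmod{c_2}$ is $\equiv 1\pmod{c_1}$, so $R_{c_2}\subseteq R_{c_1}$, whence $R_{c_2}K(f)^p\subseteq R_{c_1}K(f)^p$ and $V_p(c_2)\le V_p(c_1)$. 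For (5), I would invoke the identity $\mathrm{codim}(V_p(q))=\delta_p(q)$ together with the first inequality (2.3.3): a prime $q\ne p$ failing the admissibility condition $q\equiv\left(\frac{d}{q}\right)\pmod{p}$ contributes $t=0$ and has $w=0$, and the prime $q=p$ with $p\nmid d$, $v_p=1$, has $w=0$ as well, so in both cases $\delta_p(q)\le t+w=0$, forcing $V_p(q)=V_p$.

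The heart of the argument is assertion (2). Here I would show that, for coprime $c_1,c_2$, the canonical map
\[
K(f)/R_{c_1c_2}K(f)^p\longrightarrow\bigl[K(f)/R_{c_1}K(f)^p\bigr]\times\bigl[K(f)/R_{c_2}K(f)^p\bigr]
\]
is an isomorphism. The decomposition $U(\mathcal{O}/c_1c_2\mathcal{O})\simeq U(\mathcal{O}/c_1\mathcal{O})\times U(\mathcal{O}/c_2\mathcal{O})$ supplied by the Chinese remainder theorem (and its analogue over $\mathbb{Q}$), already exploited in the proof of Proposition \ref{prp:PrimResCl}, identifies the residue description of $K(f)/R_c$ with the corresponding product; passing to $p$-elementary quotients as in (2.3.1) yields the displayed isomorphism. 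Consequently an element of $I_p(f)$ lies in $R_{c_1c_2}K(f)^p$ if and only if it lies in both $R_{c_1}K(f)^p$ and $R_{c_2}K(f)^p$, and intersecting with $I_p(f)$ gives $V_p(c_1c_2)=V_p(c_1)\cap V_p(c_2)$.

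For assertion (4) I would first establish the stabilization $V_p(q^n)=V_p(q)$ for $q\ne p$, $n\ge 1$, and $V_p(p^n)=V_p(p^2)$ for $n\ge 2$. In each case anti-monotonicity from (3) gives $V_p(q^n)\le V_p(q)$, so it suffices to match dimensions. For $q\ne p$ this is immediate because raising to the $p$-th power is an automorphism of the pro-$q$ group of higher principal units $1+q\mathcal{O}$, so a congruence modulo $q$ can be tightened to modulo $q^n$ after modification by a $p$-th power, whence $\delta_p(q^n)=\delta_p(q)$; for $q=p$ the local increments recorded in the table of Proposition \ref{prp:PrimResCl} vanish beyond exponent $2$, giving $\delta_p(p^n)=\delta_p(p^2)$. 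Writing an arbitrary integer as a product of its prime powers and applying (2) repeatedly then expresses $V_p(c)$ through the values $V_p(q)$, $V_p(p)$, $V_p(p^2)$; the concluding \emph{in particular} is the special case of a $p$-admissible conductor $c=q_1\cdots q_\tau$, whose factors are pairwise coprime by Definition \ref{dfn:AdmCnd}.

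The main obstacle I anticipate is the reverse inclusion hidden in (2), namely the bijectivity of the Chinese remainder map at the level of the $p$-elementary ring quotients rather than on the full residue class groups: one must verify that splitting a global congruence into its local components at $c_1$ and $c_2$ remains compatible after factoring out $p$-th powers, so that no spurious identifications or obstructions arise when the two local data are glued. Once this compatibility is secured, the local $p$-divisibility argument driving the stabilization in (4) is routine, and the remaining work is purely lattice-theoretic bookkeeping.
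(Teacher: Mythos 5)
Your proposal is correct, and on the decisive assertion (2) it takes a genuinely different route from the paper. The paper tests integral representatives \(\alpha\in I_p(f)\cap\mathcal{O}\) against the rings \(R_{c_1}\), \(R_{c_2}\), \(R_{c_1c_2}\) and settles the equivalence by the suborder identities \(\mathcal{O}(f)\cap R_c=\mathcal{O}_c(f)\) and \(\mathcal{O}_{c_1\cdot c_2}=\mathcal{O}_{c_1}\cap\mathcal{O}_{c_2}\) for coprime \(c_1,c_2\), imported from \cite{Ma1}; you instead prove the Chinese-remainder decomposition of the whole quotient \(K(f)/R_cK(f)^p\) and deduce the subgroup identity \(R_{c_1}K(f)^p\cap R_{c_2}K(f)^p=R_{c_1\cdot c_2}K(f)^p\), which is more structural and self-contained, and actually closes a detail the paper glides over: memberships in \(R_{c_1}K(f)^p\) and in \(R_{c_2}K(f)^p\) may be witnessed by \emph{different} \(p\)-th-power modifications of the same coset, so one cannot argue on a single representative. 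The compatibility you flag as the main obstacle is settled in one line: under the CRT isomorphism \(K(f)/R_{c_1\cdot c_2}\simeq G_1\times G_2\) with \(G_i\simeq K(f)/R_{c_i}\), the subgroups \(R_{c_1}K(f)^p/R_{c_1\cdot c_2}\) and \(R_{c_2}K(f)^p/R_{c_1\cdot c_2}\) correspond to \(G_1^p\times G_2\) and \(G_1\times G_2^p\), whose intersection \(G_1^p\times G_2^p\) is exactly the image of \(R_{c_1\cdot c_2}K(f)^p\). Your direct arguments for (1) and (3) (via \(R_1K(f)^p\supseteq I_p(f)\) and \(R_{c_2}\subseteq R_{c_1}\)) replace the paper's use of the first inequality (2.3.3) and the abstract lattice implication, and your (3) is cleaner in not presupposing (2); item (5) coincides with the paper. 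For (4) you supply the stabilization \(V_p(q^n)=V_p(q)\), \(V_p(p^n)=V_p(p^2)\) that the paper leaves to the remark preceding Definition \ref{dfn:AdmCnd}; only note that at \(q=p\), where \(p\)-th powering is not surjective on principal units, the vanishing of the increments in Proposition \ref{prp:PrimResCl} merely makes \(t+w=\dim_{\mathbb{F}_p}(K(f)/R_{p^n}K(f)^p)\) constant for \(n\ge 2\), so to get \(\delta_p(p^n)=\delta_p(p^2)\) you should add that in (2.3.2) both \(\Delta\varrho_{c,p}\) and \(\delta_p(c)\) are non-decreasing under divisibility (ring class groups surject, ring spaces shrink) while their sum stays constant, forcing each to be constant. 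With these two small completions your proof is valid and, in places, more careful than the paper's.
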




\begin{proof}

\begin{enumerate}

\item
Since \(c=1\)
is trivially characterized by the counters \(t=0\) and \(w=0\),
the first inequality (2.3.3),
\(\delta_p(1)\le t+w=0\),
shows that \(c=1\) is free and \(V_p(1)=V_p\).

\item
Suprema in the divisor lattice \(\mathbb{N}\) are least common multiples
and infima in the lattice \(\mathcal{S}\) of subspaces are intersections.
The claim can be reduced to the property
\[V_p(c_1\cdot c_2)=V_p(c_1)\cap V_p(c_2),\text{ for coprime integers }c_1,c_2.\]
Let \(f\) be any common multiple of \(c_1\) and \(c_2\).
Then we use \(f\) as a common module of definition for the \(p\)-ring spaces
\[V_p(c_1)=I_p(f)\cap R_{c_1}K(f)^p/K(f)^p,
\ V_p(c_2)=I_p(f)\cap R_{c_2}K(f)^p/K(f)^p,\]
\[\text{and }
V_p(c_1\cdot c_2)=I_p(f)\cap R_{c_1\cdot c_2}K(f)^p/K(f)^p.\]
It is sufficient to consider the coset \(\alpha K(f)^p\)
of some integral generator \(\alpha\in I_p(f)\cap\mathcal{O}\)
of a principal \(p\)th power of an ideal of \(K\).
Since \(c_1\) and \(c_2\) are coprime, we have the equivalence
\[\alpha\in\mathcal{O}(f)\cap R_{c_1\cdot c_2}=\mathcal{O}_{c_1\cdot c_2}(f)\
\iff\]
\[\alpha\in\mathcal{O}(f)\cap R_{c_1}=\mathcal{O}_{c_1}(f)
\text{ and }
\alpha\in\mathcal{O}(f)\cap R_{c_2}=\mathcal{O}_{c_2}(f),\]
in the suborders with the corresponding conductors
of the maximal order \(\mathcal{O}\) of \(K\)
\cite[Thm. 1.1, 4, p. 22]{Ma1}.

\item
It is well known that (2) implies the anti-monotony of \(\psi\).

\item
By (2), the \(p\)-ring space of a composed conductor
is the intersection of the \(p\)-ring spaces of its prime divisors.

\item
Since a prime \(q\) which is \(p\)-inadmissible over \(K\)
is characterized by the counters \(t=0\) and \(w=0\), by definition,
this is a consequence of the first inequality (2.3.3),
\(\delta_p(q)\le t+w=0\), which implies \(V_p(q)=V_p\).

\end{enumerate}

\end{proof}



\subsection{\(s\)-occupation numbers}
\label{ss:Combinatorics}

\begin{definition}
\label{dfn:Combinatorics}

Let \(c=q_1\cdots q_{\tau}\) be a \(p\)-admissible conductor
over the quadratic field \(K\)
and assume that \(0\le s\le\tau\).
Further let \(T\le V_p\) be any subspace of the
vector space \(V_p\) of principal \(p\)th powers of ideals of \(K\).
Then the \(s\)-\textit{occupation number} of \(T\)
\[n_s(T),\text{ resp. }\tilde{n}_s(T)=\#\lbrace 1\le k_1<\ldots<k_s\le\tau,\text{ resp. }\le t\mid V_p(q_{k_1}\cdots q_{k_s})=T\rbrace\]
will denote the number of
divisors \(q_{k_1}\cdots q_{k_s}\) of \textit{length} \(s\),
that is, with \(s\) prime factors, of \(c\),
whose \(p\)-ring space coincides with \(T\).
In particular, for the entire vector space \(T=V_p\),
\[u=n_1(V_p),\text{ resp. }\tilde{u}=\tilde{n}_1(V_p)=\#\lbrace 1\le k\le\tau,\text{ resp. }\le t\mid\delta_p(q_k)=0\rbrace\]
denotes the number of \textit{free prime divisors} of \(c\).

\end{definition}



\begin{proposition}
\label{prp:Combinatorics}

Under the assumptions of Definition
\ref{dfn:Combinatorics},
the \(s\)-occupation numbers \(n_s(T)\)
for the simplest types of subspaces \(T\le V_p\)
are given by the following superpositions of binomial distributions.
Similar formulas hold for \(\tilde{n}_s\) and \(\tilde{u}\).

\begin{enumerate}

\item
For the entire vector space \(T=V_p\),
\[n_s(V_p)=\binom{u}{s}.\]

\item
For any hyperplane \(T=H<V_p\),
\[n_s(H)=\binom{u+n_1(H)}{s}-\binom{u}{s}.\]
In particular, if the \(p\)-ring space of \(c\) coincides with \(H\),
then \(u+n_1(H)=\tau\).

\item
For any subspace \(T<V_p\) of codimension \(\mathrm{codim}(T)=2\),
\[n_s(T)=\binom{u+b(T)}{s}-\sum_{H>T}\,\left\lbrack\binom{u+n_1(H)}{s}-\binom{u}{s}\right\rbrack-\binom{u}{s},\]
where the sum runs over all hyperplanes \(H\) containing \(T\)
and \(b(T)=\sum_{H>T}\,n_1(H)\) denotes the \(1\)-occupation number of the \textit{bundle of hyperplanes} over \(T\).\\
In particular, if the \(p\)-ring space of \(c\) coincides with \(T\),
then \(u+b(T)=\tau\),
except in the irregular case \(p=3\), \(d\equiv -3\pmod{9}\), \(q_{t+1}=9\), \(V_3(9)=T\),
where \(u+b(T)=t\).

\end{enumerate}

\end{proposition}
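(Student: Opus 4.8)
The plan is to reduce every $s$-occupation number to an intersection of the factor spaces via Proposition~\ref{prp:Morphism}(4), $V_p(q_{k_1}\cdots q_{k_s})=V_p(q_{k_1})\cap\cdots\cap V_p(q_{k_s})$, and then to use that each factor is as simple as possible. First I would invoke the first inequality~(2.3.3): for a $p$-admissible prime $q_k\neq p$ one has $t=1$, $w=0$, so $\delta_p(q_k)\le 1$ and $V_p(q_k)$ is either all of $V_p$ or a hyperplane; the same bound holds for the prime power factor $q_{t+1}=p^e$ in every regular case. The only exception is the irregular prime power $q_{t+1}=9$ (with $p=3$, $d\equiv -3\pmod 9$), where $V_3(9)$ may have codimension $2$. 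Hence, outside this single exception, the intersection of any collection of factors is completely determined by the set of distinct hyperplanes occurring among them.

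The one genuinely geometric input I would isolate is linear-algebraic: if $\mathrm{codim}(T)=2$, then $V_p/T\cong\mathbb{F}_p^2$ admits exactly $p+1$ hyperplanes $H_i>T$, and since two distinct lines through the origin of $\mathbb{F}_p^2$ already meet only in $0$, an intersection of hyperplanes (each containing $T$) equals $T$ precisely when at least two distinct $H_i$ occur among them. The analogous statements in codimension $0$ and $1$ are trivial: the intersection equals $V_p$ iff no hyperplane occurs, and equals a given hyperplane $H$ iff $H$ is the unique hyperplane occurring. With this in hand, parts~(1) and~(2) are immediate. For $T=V_p$ the admissible $s$-subsets are exactly the $s$-subsets of the $u$ free indices, so $n_s(V_p)=\binom{u}{s}$. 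For a hyperplane $T=H$, an $s$-subset qualifies iff every factor lies in $\{V_p,H\}$ and not all are $V_p$; since $u+n_1(H)$ indices $k$ satisfy $V_p(q_k)\in\{V_p,H\}$, this gives $n_s(H)=\binom{u+n_1(H)}{s}-\binom{u}{s}$, and applying this to all $\tau$ factors when $V_p(c)=H$ forces $u+n_1(H)=\tau$.

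For part~(3) I would argue by a clean partition in place of inclusion--exclusion. Put $b(T)=\sum_{H>T}n_1(H)$, so that precisely $u+b(T)$ indices lie in the \emph{bundle} over $T$, namely those with $V_p(q_k)\in\{V_p\}\cup\{H_i:H_i>T\}$. Every $s$-subset drawn from these $u+b(T)$ indices falls into exactly one of three disjoint classes, according to its intersection: those using no hyperplane (intersection $V_p$), those using exactly one distinct hyperplane $H_i$ (intersection $H_i$), and those using at least two distinct hyperplanes (intersection $T$, by the geometric observation). Counting the first two classes by parts~(1) and~(2) and the third by $n_s(T)$ yields
\[
\binom{u+b(T)}{s}=\binom{u}{s}+\sum_{H>T}\left[\binom{u+n_1(H)}{s}-\binom{u}{s}\right]+n_s(T),
\]
and solving for $n_s(T)$ gives the stated formula.

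The \emph{in particular} clause then follows at once: if $V_p(c)=T$, all $\tau$ factors lie in the bundle, so $u+b(T)=\tau$. The main obstacle I anticipate lies in the careful bookkeeping of the exceptional factor $q_{t+1}=9$: when $V_3(9)$ is a hyperplane or all of $V_3$ it may be treated like any ordinary factor, but when $V_3(9)=T$ is itself of codimension $2$, this single factor realizes $T$ while lying outside the bundle, so it must be removed from the count and only the remaining $t$ factors contribute, giving $u+b(T)=t$. Verifying that the three-way partition remains exhaustive and disjoint in the presence of this irregular factor---and thereby distinguishing the normalizations $u+b(T)=\tau$ and $u+b(T)=t$---is the delicate point of the argument.
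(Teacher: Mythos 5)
Your proposal is correct and follows essentially the same route as the paper: reduce via Proposition~\ref{prp:Morphism}(4) to intersections of the prime-level ring spaces, observe that each such space is $V_p$ or a hyperplane (the paper leaves the appeal to the first inequality~(2.3.3) and the fact that the $p+1$ hyperplanes over a codimension-$2$ subspace $T$ pairwise intersect in $T$ implicit, which you helpfully make explicit), and count $s$-subsets of the bundle by the same trichotomy --- your partition identity solved for $n_s(T)$ is just the paper's \lq\lq neither\ldots nor\rq\rq\ subtraction rearranged. Your closing remark on the irregular factor $q_{t+1}=9$ with $V_3(9)=T$, which realizes $T$ while lying outside the bundle and forces $u+b(T)=t$, correctly accounts for the exceptional clause that the paper states in the proposition but defers, in its own proof, to the case analysis of Proposition~\ref{prp:InnerSum}.
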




\begin{proof}

According to Proposition
\ref{prp:Morphism} (4),
\(p\)-ring spaces of composed conductors are intersections
\[V_p(q_{k_1}\cdots q_{k_s})=V_p(q_{k_1})\cap\cdots\cap V_p(q_{k_s}),
\text{ for any } 0\le s\le\tau \text{ and }
1\le k_1<\ldots<k_s\le\tau.\]

\begin{enumerate}

\item
The \(s\)-occupation number of the entire space,
\[n_s(V_p)=\#\lbrace 1\le k_1<\ldots<k_s\le\tau\mid V_p(q_{k_1}\cdots q_{k_s})=V_p\rbrace,\]
arises from products of \(s\) free prime conductors, since
\[V_p(q_{k_1}\cdots q_{k_s})=V_p\
\iff\
V_p(q_{k_1})\cap\cdots\cap V_p(q_{k_s})=V_p\
\iff\]
\[V_p(q_{k_j})=V_p,
\text{ that is } \delta_p(q_{k_j})=0, \text{ for all } 1\le j\le s.\]
Therefore, \(n_s(V_p)\) is the number of subsets of \(s\) elements of the set
\(\lbrace 1\le k\le\tau\mid\delta_p(q_k)=0\rbrace\)
with cardinality \(u=n_1(V_p)\), that is the binomial coefficient \(n_s(V_p)=\binom{u}{s}\).

\item
To calculate the \(s\)-occupation number of a hyperplane \(H\),
\[n_s(H)=\#\lbrace 1\le k_1<\ldots<k_s\le\tau\mid V_p(q_{k_1}\cdots q_{k_s})=H\rbrace,\]
we analyze the condition
\[V_p(q_{k_1}\cdots q_{k_s})=H\
\iff\
V_p(q_{k_1})\cap\cdots\cap V_p(q_{k_s})=H\
\iff\]
\[V_p(q_{k_j})\in\lbrace H,V_p\rbrace, \text{ for all } 1\le j\le s,
\text{ but not }
V_p(q_{k_j})=V_p, \text{ for all } 1\le j\le s.\]
Consequently, \(n_s(H)=\binom{u+n_1(H)}{s}-\binom{u}{s}\)
is the difference of two binomial coefficients,
where \(n_1(H)\) denotes the cardinality of the set
\(\lbrace 1\le k\le\tau\mid V_p(q_k)=H\rbrace\).

\item
Denote by \(H_1,\ldots,H_{p+1}\) the hyperplane bundle over \(T\) in \(V_p\). Then
\[V_p(q_{k_1}\cdots q_{k_s})=T\
\iff\
V_p(q_{k_1})\cap\cdots\cap V_p(q_{k_s})=T\
\iff\]
\[V_p(q_{k_j})\in\lbrace H_1,\ldots,H_{p+1},V_p\rbrace, \text{ for all } 1\le j\le s,
\text{ but}\]
\[\text{neither }
V_p(q_{k_j})\in\lbrace H_i,V_p\rbrace, \text{ for all } 1\le j\le s, \text{ for any } 1\le i\le p+1,\]
\[\text{nor }
V_p(q_{k_j})=V_p, \text{ for all } 1\le j\le s.\]
Thus,
\(n_s(T)=\binom{u+b(T)}{s}-\sum_{i=1}^{p+1}\,\left\lbrack\binom{u+n_1(H_i)}{s}-\binom{u}{s}\right\rbrack-\binom{u}{s}\),
where \(b(T)=\sum_{i=1}^{p+1}\,n_1(H_i)\).

\end{enumerate}

\end{proof}



With the aid of \(s\)-occupation numbers
we simplify the \textit{inner sum} \(L_s\) over divisors of length \(s\)
in the explicit expressions for \(p\)-multiplicities
in the case of a regular conductor
\cite[Thm. 3.2, p. 838]{Ma3}
or of an irregular conductor
\cite[Thm. 3.3, p. 841]{Ma3}.

\bigskip
\noindent
Let \(K=\mathbb{Q}(\sqrt{d})\) be an arbitrary but fixed quadratic number field
with discriminant \(d=d_K\) and \(p\)-class rank \(\varrho=\varrho_p\).



\subsection{Regular conductors}
\label{ss:RegCnd}

In the case of a regular \(p\)-admissible conductor
\(c=p^eq_1\cdots q_t>1\), with
\(0\le e\le 2\), \(t\ge 0\),
the \(p\)-multiplicity of the dihedral discriminant
\(d_N=c^{2(p-1)}\cdot d^p\)
is given by
\[(5.3.1) \qquad
m_p(d,c)
=p^{\varrho}\cdot\frac{1}{p^{\delta(c)}}\cdot
\left\lbrack(p-1)^{\tau-1}+\sum_{s=0}^{\tau}\,(-1)^{\tau-s}\cdot p^s\cdot\sum_{1\le k_1<\ldots<k_s\le\tau}\,
\frac{p^{\delta(c)-\delta(q_{k_1}\cdots q_{k_s})}-1}{p-1}\right\rbrack
\]
according to
\cite[Thm. 3.2, p. 838]{Ma3}.
For an actual contribution \(e\ge 1\) by the critical prime \(p\)
we put \(\tau=t+1\) and formally \(q_{t+1}=p^e\),
whereas \(\tau=t\) for \(e=0\).
Then we always have \(c=q_1\cdots q_{\tau}\).
Further we take into consideration that the maximal \(p\)-defect
is given by \(\delta_{\max}=\delta(c)\),
where we briefly write \(\delta(c)=\delta_p(c)\).

We rearrange the terms of the inner sum \(L_s(\delta(c))\)
over divisors \(q_{\Bbbk}=q_{k_1}\cdots q_{k_s}\) of length \(s\),
using multi indices \(\Bbbk=(k_1,\ldots,k_s)\)
in the set \(\lbrack 1,\tau\rbrack_{<}^s=\lbrace\Bbbk\mid 1\le k_1<\ldots<k_s\le\tau\rbrace\)
of cardinality \(\binom{\tau}{s}\),
\[L_s(\delta(c))=\sum_{\Bbbk\in\lbrack 1,\tau\rbrack_{<}^s}\,
\frac{p^{\delta(c)-\delta(q_{\Bbbk})}-1}{p-1}.\]
Multi indices \(\Bbbk\) sharing the same value \(\delta\) of \(\delta(q_{\Bbbk})\) are collected:
\[(5.3.2) \qquad
L_s(\delta(c))=\sum_{\delta=0}^{\delta(c)}\,\frac{p^{\delta(c)-\delta}-1}{p-1}\cdot\#\lbrace\Bbbk\mid\delta(q_{\Bbbk})=\delta\rbrace
\]



\subsection{Irregular conductors}
\label{ss:IrrCnd}

For an irregular \(3\)-admissible conductor
\(c=3^2q_1\cdots q_t\ge 3^2\),
where \(d\equiv -3\pmod{9}\),
the \(3\)-multiplicity of the dihedral discriminant
\(d_N=c^4\cdot d^3\)
is given by
\[(5.4.1) \qquad
m_3(d,c)
=3^{\varrho}\cdot\sum_{s=0}^{t}\,(-1)^{t-s}\cdot 3^s\cdot\sum_{1\le k_1<\ldots<k_s\le t}\,
\frac{3^{2-\delta(3^2\cdot q_{k_1}\cdots q_{k_s})}-3^{1-\delta(3\cdot q_{k_1}\cdots q_{k_s})}}{2}
\]
according to
\cite[Thm. 3.3, p. 841]{Ma3}.
Again, terms of the inner sum over divisors of length \(s\)
will be rearranged now.
To avoid negative exponents, it is convenient
to multiply the numerator and denominator of the fractions by \(3^{\delta(c)}\):
\[L_s(\delta(c))
=\frac{1}{3^{\delta(c)}}\cdot\sum_{\Bbbk\in[1,t]_{<}^s}\,
\frac{3^{\delta(c)+2-\delta(3^2\cdot q_{\Bbbk})}-3^{\delta(c)+1-\delta(3\cdot q_{\Bbbk})}}{2}.\]
As before,
multi indices \(\Bbbk\in[1,t]_{<}^s\) sharing the same properties are collected,
taking into account that there are two possibilities
\(\delta(3^2\cdot q_{\Bbbk})=\delta(3\cdot q_{\Bbbk})\)
and
\(\delta(3^2\cdot q_{\Bbbk})=\delta(3\cdot q_{\Bbbk})+1\),
where the latter does not yield a contribution:
\[L_s(\delta(c))=\frac{1}{3^{\delta(c)}}\cdot\sum_{\delta=0}^{\delta(c)}\,
\frac{(3-1)\cdot 3^{\delta(c)+1-\delta}}{2}
\cdot\#\lbrace\Bbbk\mid\delta(3^2\cdot q_{\Bbbk})=\delta(3\cdot q_{\Bbbk})=\delta\rbrace,\]

\[(5.4.2) \qquad
L_s(\delta(c))=\frac{1}{3^{\delta(c)}}\cdot\sum_{\delta=0}^{\delta(c)}\,
3^{\delta(c)+1-\delta}\cdot\#\lbrace\Bbbk\mid\delta(3^2\cdot q_{\Bbbk})=\delta(3\cdot q_{\Bbbk})=\delta\rbrace.\]



\subsection{Inner sums \(L_s\)}
\label{ss:InnerSum}

\begin{proposition}
\label{prp:InnerSum}

Let \(c\) be a \(p\)-admissible conductor over \(K\) with \(p\)-defect \(\delta_p(c)=2\).
Denote by \((H_i)_{1\le i\le p+1}\) the bundle of hyperplanes in \(V_p\)
which contain the \(p\)-ring space \(V_p(c)\) of \(c\).

\begin{enumerate}

\item
For a regular conductor \(c\),
\[L_s(2)=\sum_{i=1}^{p+1}\,\binom{u+n_1(H_i)}{s}.\]

\item
For an irregular conductor \(c\) with \(3\)-defect \(\delta_3(c)=2\),
and \(\delta_3(3^2)=\delta_3(3)=0\),
\[L_s(2)=\frac{1}{3}\left\lbrack\binom{t}{s}+2\cdot\sum_{i=1}^{4}\,\binom{\tilde{u}+n_1(H_i)}{s}\right\rbrack.\]

\item
For an irregular conductor \(c\) with \(3\)-defect \(\delta_3(c)=2\),
\(\delta_3(3^2)=1>\delta_3(3)=0\), and distinguished hyperplane \(V_3(3^2)=H_{i_0}\),
\[L_s(2)=\frac{1}{3}\left\lbrack\binom{t}{s}+3\cdot\binom{u+\tilde{n}_1(H_{i_0})}{s}
-\sum_{i=1}^{4}\,\binom{u+\tilde{n}_1(H_i)}{s}\right\rbrack.\]

\item
For an irregular conductor \(c\) with \(3\)-defect \(\delta_3(c)=2\),
\(\delta_3(3^2)=\delta_3(3)=1\), and distinguished hyperplane \(V_3(3^2)=V_3(3)=H_{i_0}\),
\[L_s(2)=\frac{1}{3}\left\lbrack\binom{t}{s}+2\cdot\binom{u+\tilde{n}_1(H_{i_0})}{s}\right\rbrack.\]

\item
For an irregular conductor \(c\) with \(3\)-defect \(\delta_3(c)=2\), maximal restrictivity
\(\delta_3(3^2)=2>\delta_3(3)=1\), distinguished hyperplane \(V_3(3)=H_{i_0}\), and \(V_3(3^2)=V_p(c)\),
\[L_s(2)=\frac{1}{3}\left\lbrack\binom{t}{s}-\binom{u+n_1(H_{i_0})}{s}\right\rbrack.\]

\end{enumerate}

\end{proposition}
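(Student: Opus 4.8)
The plan is to evaluate the collected inner sums (5.3.2) and (5.4.2) at the maximal defect $\delta(c)=2$ and then rewrite each surviving counting term as an $s$-occupation number furnished by Proposition~\ref{prp:Combinatorics}. For the regular case~(1) the weights $\frac{p^{2-\delta}-1}{p-1}$ in (5.3.2) equal $p+1,\,1,\,0$ for $\delta=0,1,2$, so only the defect-$0$ and defect-$1$ multi-indices contribute. The set $\lbrace\Bbbk\mid\delta(q_{\Bbbk})=0\rbrace$ is $\lbrace\Bbbk\mid V_p(q_{\Bbbk})=V_p\rbrace$, counted by $n_s(V_p)=\binom{u}{s}$; and since $V_p(q_{\Bbbk})\supseteq V_p(c)$ by anti-monotony (Proposition~\ref{prp:Morphism}(3)), a codimension-one value $V_p(q_{\Bbbk})$ must be one of the bundle hyperplanes $H_i$, so $\lbrace\Bbbk\mid\delta(q_{\Bbbk})=1\rbrace$ is counted by $\sum_{i=1}^{p+1}n_s(H_i)$. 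Substituting $n_s(H_i)=\binom{u+n_1(H_i)}{s}-\binom{u}{s}$ from Proposition~\ref{prp:Combinatorics}(2), the $(p+1)\binom{u}{s}$ terms cancel and leave the asserted formula.

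For the irregular cases~(2)--(5) the same template applies to (5.4.2), where the weights $\frac{1}{9}\cdot 3^{3-\delta}$ equal $3,\,1,\,\frac{1}{3}$ for $\delta=0,1,2$. The decisive geometric input, which is also the main obstacle, is to determine for each configuration of the distinguished positions $V_3(3)$ and $V_3(3^2)$ precisely which products $q_{\Bbbk}$ satisfy the joint condition $\delta(3^2q_{\Bbbk})=\delta(3q_{\Bbbk})=\delta$. Here I would repeatedly use two facts: first, that intersecting a subspace $W$ with a hyperplane $H$ leaves $\mathrm{codim}(W)$ unchanged when $W\subseteq H$ and raises it by exactly one otherwise; and second, that each non-critical prime $q_k$ with $1\le k\le t$ has $\delta_p(q_k)\le 1$ by the first inequality (2.3.3), so that $V_3(q_k)$ is either $V_3$ or a bundle hyperplane, and hence every intersection $V_3(q_{\Bbbk})$ lies in $\lbrace V_3,H_1,\ldots,H_4,V_3(c)\rbrace$.

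With these tools the cases separate cleanly. In~(2), where $V_3(3)=V_3(3^2)=V_3$, the two defects collapse to $\delta(q_{\Bbbk})$ and the condition is simply $\delta(q_{\Bbbk})=\delta$, giving counts $\binom{\tilde u}{s}$, $\sum_{i=1}^{4}\tilde n_s(H_i)$, $\tilde n_s(V_3(c))$. In~(3), where $V_3(3)=V_3$ but $V_3(3^2)=H_{i_0}$, equality of the defects forces $V_3(q_{\Bbbk})\subseteq H_{i_0}$, selecting only $V_3(q_{\Bbbk})=H_{i_0}$ at $\delta=1$ and $V_3(q_{\Bbbk})=V_3(c)$ at $\delta=2$. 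In~(4), where $V_3(3)=V_3(3^2)=H_{i_0}$, the two defects coincide automatically and partition the multi-indices by whether $V_3(q_{\Bbbk})\in\lbrace V_3,H_{i_0}\rbrace$ (giving $\delta=1$, counted by $\binom{\tilde u+\tilde n_1(H_{i_0})}{s}$) or not (giving $\delta=2$). In~(5), where $V_3(3^2)=V_3(c)$ makes $\delta(3^2q_{\Bbbk})=2$ identically, only $\delta=2$ survives, counted by the complement $\binom{t}{s}-\binom{\tilde u+\tilde n_1(H_{i_0})}{s}$.

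It remains to pass from these counts to binomials and collect. I would invoke Proposition~\ref{prp:Combinatorics} in its tilde form over the range $[1,t]$, noting that every $q_k$ with $1\le k\le t$ satisfies $V_3(q_k)\supseteq V_3(c)$, so it is either free or on a bundle hyperplane; hence $\tilde u+\tilde b=t$, which turns the leading term $\binom{\tilde u+\tilde b}{s}$ of Proposition~\ref{prp:Combinatorics}(3) into $\binom{t}{s}$ (the total count over $[1,t]^s_{<}$, which is what the complementary counts in~(4),(5) use). Finally I would reconcile $u$ with $\tilde u$ and $n_1(H_i)$ with $\tilde n_1(H_i)$ according to where $3^2$ sits: $u=\tilde u+1$ with $n_1(H_i)=\tilde n_1(H_i)$ in~(2) where $3^2$ is free, while $u=\tilde u$ otherwise, with $3^2$ contributing to $n_1(H_{i_0})$ only in~(3),(4). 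After inserting the weights $3,\,1,\,\tfrac13$ and simplifying, the $\binom{\tilde u}{s}$ terms cancel in every case and the common factor $\tfrac13$ is extracted, yielding the five displayed expressions. The binomial algebra is routine; the genuine work is the position analysis of $V_3(3)$ and $V_3(3^2)$ relative to the hyperplane bundle.
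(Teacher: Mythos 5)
Your proposal is correct and takes essentially the same route as the paper's own proof: you evaluate the collected inner sums (5.3.2) and (5.4.2) at \(\delta(c)=2\) with weights \(p+1,1,0\) resp. \(3,1,\tfrac{1}{3}\), classify the multi-indices satisfying \(\delta(3^2q_{\Bbbk})=\delta(3q_{\Bbbk})=\delta\) by the position of \(V_3(3)\) and \(V_3(3^2)\) relative to the hyperplane bundle (your codimension-under-intersection analysis reproduces exactly the paper's case tables in Figures \ref{fig:FreelyIrr}--\ref{fig:MaxRestIrr}), and convert the resulting counts into binomial coefficients via Proposition \ref{prp:Combinatorics} with the same \(u\)/\(\tilde{u}\) and \(n_1\)/\(\tilde{n}_1\) bookkeeping, after which the \(\binom{\tilde{u}}{s}\) terms cancel as in the paper. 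The only cosmetic difference is that you state the intersection facts abstractly where the paper tabulates the subspaces \(V_3(3\cdot q_{\Bbbk})\) and \(V_3(3^2\cdot q_{\Bbbk})\) row by row.
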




\begin{proof}

\begin{enumerate}

\item
Firstly, we investigate the inner sum \(L_s(2)\)
over partial conductors of length \(s\)
for a regular \(p\)-admissible conductor \(c\)
having \(p\)-defect \(\delta(c)=2\).
According to equation (5.3.2), we have
\[L_s(2)=\sum_{\delta=0}^{2}\,\frac{p^{2-\delta}-1}{p-1}\cdot\#\lbrace\Bbbk\mid\delta(q_{\Bbbk})=\delta\rbrace\]
\[=\frac{p^{2}-1}{p-1}\cdot\#\lbrace\Bbbk\mid\delta(q_{\Bbbk})=0\rbrace
+\frac{p-1}{p-1}\cdot\#\lbrace\Bbbk\mid\delta(q_{\Bbbk})=1\rbrace
+\frac{1-1}{p-1}\cdot\#\lbrace\Bbbk\mid\delta(q_{\Bbbk})=2\rbrace.\]
According to Prop.
\ref{prp:Combinatorics},
the cardinalities can be expressed by binomial coefficients,
\[L_s(2)
=(p+1)\cdot\binom{u}{s}
+1\cdot\sum_{i=1}^{p+1}\,\left\lbrack\binom{u+n_1(H_i)}{s}-\binom{u}{s}\right\rbrack
+0\]
\[=(p+1)\cdot\binom{u}{s}
+\sum_{i=1}^{p+1}\,\binom{u+n_1(H_i)}{s}
-(p+1)\cdot\binom{u}{s}
=\sum_{i=1}^{p+1}\,\binom{u+n_1(H_i)}{s}.\]



\item
Secondly, we consider an irregular conductor \(c\) with free critical prime divisor \(3\) and free square \(3^2\),
that is \(\omega=1\), \(\delta(3)=0\), \(\delta(3^2)=0\).
Then the two \(3\)-ring spaces \(V(3)=V\) and \(V(3^2)=V\) coincide with the entire vectorspace \(V=V_3\).

\smallskip
\noindent
For each possibility of the \(3\)-ring space \(V(q_{\Bbbk})\),
we list the subspaces \(V(3\cdot q_{\Bbbk})\) and \(V(3^2\cdot q_{\Bbbk})\) in the table of Figure
\ref{fig:FreelyIrr},
where \(H\) denotes an arbitrary hyperplane and \(L=V(c)\) denotes the limit space.

\begin{figure}[ht]
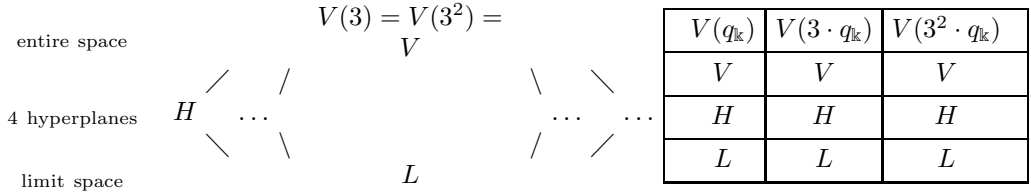

\caption{\(3\)-ring spaces \(V(f)\), \(f\mid c\), for irregular \(c\), \(\delta(3)=\delta(3^2)=0\)}
\label{fig:FreelyIrr}

$$
\vbox{
\offinterlineskip
\halign{
\strut\hfil$#$\hfil\quad  &\ \hfil$#$\hfil\ &\hfil$#$\hfil &\ \hfil$#$\hfil\ &\hfil$#$\hfil &\quad\hfil$#$\hfil\quad &\hfil$#$\hfil &\ \hfil$#$\hfil\ &\hfil$#$\hfil &\ \hfil$#$\hfil\ \cr
                          &                 &              &                 &              & V(3)=V(3^2)=           &              &                 &              &               \cr
{}^{\rm entire\ space}    &                 &              &                 &              & V                      &              &                 &              &               \cr
                          &                 & \diagup      &                 & /            &                        & \backslash   &                 & \diagdown    &               \cr
{}_{4\ {\rm hyperplanes}} & H               &              & \ldots          &              &                        &              & \ldots          &              &  \ldots       \cr
                          &                 & \diagdown    &                 & \backslash   &                        & /            &                 & \diagup      &               \cr
{}_{\rm limit\ space}     &                 &              &                 &              & L                      &              &                 &              &               \cr
}
}
\vbox{
\offinterlineskip
\halign{
 \vrule # & \strut\quad\hfill $#$\hfill \ & \vrule\vrule # & \ \hfill $#$\hfill \  & \vrule # &\ \hfill $#$\hfill \quad & \vrule # \cr
\noalign{\hrule}
          &       \omit                   & height 2pt     &                       &          &                         &          \cr
          & V(q_\Bbbk)                    &                & V(3\cdot q_\Bbbk)     &          & V(3^2\cdot q_\Bbbk)     &          \cr
          &       \omit                   & height 2pt     &                       &          &                         &          \cr
\noalign{\hrule}
          &       \omit                   & height 2pt     &                       &          &                         &          \cr
          & V                             &                & V                     &          & V                       &          \cr
          &       \omit                   & height 2pt     &                       &          &                         &          \cr
\noalign{\hrule}
          &       \omit                   & height 2pt     &                       &          &                         &          \cr
          & H                             &                & H                     &          & H                       &          \cr
          &       \omit                   & height 2pt     &                       &          &                         &          \cr
\noalign{\hrule}
          &       \omit                   & height 2pt     &                       &          &                         &          \cr
          & L                             &                & L                     &          & L                       &          \cr
          &       \omit                   & height 2pt     &                       &          &                         &          \cr
\noalign{\hrule}
}
}
$$

\end{figure}

\noindent
Thus we always have \(\delta(q_\Bbbk)=\delta(3q_\Bbbk)=\delta(9q_\Bbbk)\) and therefore,
according to equation (5.4.2) and Prop.
\ref{prp:Combinatorics},
\[L_s(2)=\frac{1}{3}\sum_{\delta=0}^2\,3^{2-\delta}\cdot\#\lbrace\Bbbk\mid\delta(9q_\Bbbk)=\delta(3q_\Bbbk)=\delta\rbrace\]
\[=\frac{1}{3}(3^2\cdot\#\lbrace\Bbbk\mid\delta(q_\Bbbk)=0\rbrace
+3\cdot\#\lbrace\Bbbk\mid\delta(q_\Bbbk)=1\rbrace
+1\cdot\#\lbrace\Bbbk\mid\delta(q_\Bbbk)=2\rbrace)\]
\[=\frac{1}{3}(9\cdot\binom{\tilde{u}}{s}
+3\cdot\sum_{i=1}^4\,\left\lbrack\binom{\tilde{u}+n_1(H_i)}{s}-\binom{\tilde{u}}{s}\right\rbrack
+\binom{t}{s}-\sum_{i=1}^4\,\left\lbrack\binom{\tilde{u}+n_1(H_i)}{s}-\binom{\tilde{u}}{s}\right\rbrack-\binom{\tilde{u}}{s})\]
\[=\frac{1}{3}(\binom{t}{s}+2\cdot\sum_{i=1}^4\,\binom{\tilde{u}+n_1(H_i)}{s}),\]
since \(\binom{\tilde{u}}{s}\) occurs \(9-12+4-1=0\) times and each \(\binom{\tilde{u}+n_1(H_i)}{s}\) occurs \(3-1=2\) times.



\item
Next, we investigate the case of an irregular conductor \(c\) with free critical prime divisor \(3\) but restrictive square \(3^2\),
that is \(\omega=1\), \(\delta(3)=0\), \(\delta(3^2)=1\).
Then \(V(3)=V\) is the entire space but \(V(3^2)\) is a hyperplane.

\smallskip
\noindent
Again, we list the subspaces \(V(3\cdot q_{\Bbbk})\) and \(V(3^2\cdot q_{\Bbbk})\) in the table of Figure
\ref{fig:TamelyIrr},
for each possibility of the \(3\)-ring space \(V(q_{\Bbbk})\),
where \(H\) denotes a hyperplane different from \(V(3^2)\) and \(L=V(c)\) denotes the limit space.

\begin{figure}[ht]
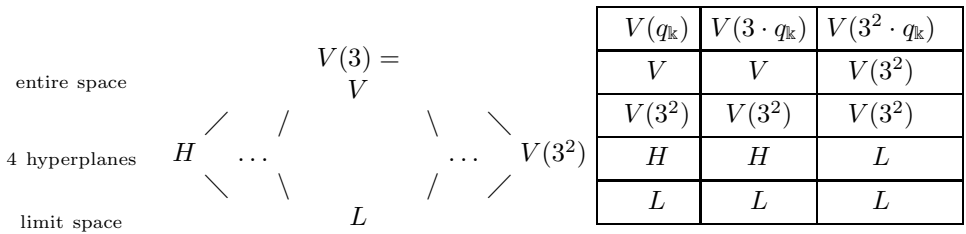

\caption{\(3\)-ring spaces \(V(f)\), \(f\mid c\), for irregular \(c\), \(\delta(3)=0\), \(\delta(3^2)=1\)}
\label{fig:TamelyIrr}

$$
\vbox{
\offinterlineskip
\halign{
\strut\hfil$#$\hfil\quad   &\ \hfil$#$\hfil\ &\hfil$#$\hfil &\ \hfil$#$\hfil\ &\hfil$#$\hfil &\quad\hfil$#$\hfil\quad &\hfil$#$\hfil &\ \hfil$#$\hfil\ &\hfil$#$\hfil &\ \hfil$#$\hfil\ \cr
                           &                 &              &                 &              & V(3)=                  &              &                 &              &               \cr
{}^{\rm entire\ space}     &                 &              &                 &              & V                      &              &                 &              &               \cr
                           &                 & \diagup      &                 & /            &                        & \backslash   &                 & \diagdown    &               \cr
{}_{4\ {\rm hyper planes}} & H               &              & \ldots          &              &                        &              & \ldots          &              & V(3^2)        \cr
                           &                 & \diagdown    &                 & \backslash   &                        & /            &                 & \diagup      &               \cr
{}_{\rm limit\ space}      &                 &              &                 &              & L                      &              &                 &              &               \cr
}
}
\vbox{
\offinterlineskip
\halign{
 \vrule # & \strut\quad\hfill $#$\hfill \ & \vrule\vrule # & \ \hfill $#$\hfill \  & \vrule # &\ \hfill $#$\hfill \quad & \vrule # \cr
\noalign{\hrule}
          &       \omit                   & height 2pt     &                       &          &                         &          \cr
          & V(q_\Bbbk)                    &                & V(3\cdot q_\Bbbk)     &          & V(3^2\cdot q_\Bbbk)     &          \cr
          &       \omit                   & height 2pt     &                       &          &                         &          \cr
\noalign{\hrule}
          &       \omit                   & height 2pt     &                       &          &                         &          \cr
          & V                             &                & V                     &          & V(3^2)                  &          \cr
          &       \omit                   & height 2pt     &                       &          &                         &          \cr
\noalign{\hrule}
          &       \omit                   & height 2pt     &                       &          &                         &          \cr
          & V(3^2)                        &                & V(3^2)                &          & V(3^2)                  &          \cr
          &       \omit                   & height 2pt     &                       &          &                         &          \cr
\noalign{\hrule}
          &       \omit                   & height 2pt     &                       &          &                         &          \cr
          & H                             &                & H                     &          & L                       &          \cr
          &       \omit                   & height 2pt     &                       &          &                         &          \cr
\noalign{\hrule}
          &       \omit                   & height 2pt     &                       &          &                         &          \cr
          & L                             &                & L                     &          & L                       &          \cr
          &       \omit                   & height 2pt     &                       &          &                         &          \cr
\noalign{\hrule}
}
}
$$

\end{figure}

\noindent
The first and third row in the table of Figure
\ref{fig:TamelyIrr}
do not give a contribution.
For the second and fourth row we have
\(\delta(q_\Bbbk)=\delta(3q_\Bbbk)=\delta(9q_\Bbbk)\) and thus,
according to equation (5.4.2) and Prop.
\ref{prp:Combinatorics},
\[L_s(2)=\frac{1}{3}\sum_{\delta=0}^2\,3^{2-\delta}\cdot\#\lbrace\Bbbk\mid\delta(9q_\Bbbk)=\delta(3q_\Bbbk)=\delta\rbrace\]
\[=\frac{1}{3}(3\cdot\#\lbrace\Bbbk\mid\delta(q_\Bbbk)=1,\ V(q_\Bbbk)=V(3^2)\rbrace
+1\cdot\#\lbrace\Bbbk\mid\delta(q_\Bbbk)=2\rbrace)\]
\[=\frac{1}{3}(3\cdot\left\lbrack\binom{u+\tilde{n}_1(V(3^2))}{s}-\binom{u}{s}\right\rbrack
+\binom{t}{s}-\sum_{i=1}^4\,\left\lbrack\binom{u+\tilde{n}_1(H_i)}{s}-\binom{u}{s}\right\rbrack-\binom{u}{s})\]
\[=\frac{1}{3}(\binom{t}{s}+3\cdot\binom{u+\tilde{n}_1(V(3^2))}{s}-\sum_{i=1}^4\,\binom{u+\tilde{n}_1(H_i)}{s}),\]
since \(\delta=0\) doesn't occur and \(\binom{u}{s}\) occurs \(-3+4-1=0\) times.



\item
Now we consider the case of an irregular conductor \(c\) with restrictive critical prime divisor \(3\) and restrictive square \(3^2\),
that is \(\omega=1\), \(\delta(3)=1\), \(\delta(3^2)=1\).
Then \(V(3)\) and \(V(3^2)\) are coinciding hyperplanes.

\smallskip
\noindent
In Figure
\ref{fig:WildlyIrr}
we construct a table of subspaces \(V(3\cdot q_\Bbbk)\) and \(V(3^2\cdot q_\Bbbk)\) for each possibility of the ring space \(V(q_\Bbbk)\),
where \(H\) denotes a hyperplane different from \(V(3)\) and \(L=V(c)\) denotes the limit space.

\begin{figure}[ht]
\caption{\(3\)-ring spaces \(V(f)\), \(f\mid c\), for irregular \(c\), \(\delta(3)=\delta(3^2)=1\)}
\label{fig:WildlyIrr}

$$
\vbox{
\offinterlineskip
\halign{
\strut\hfil$#$\hfil\quad  &\ \hfil$#$\hfil\ &\hfil$#$\hfil &\ \hfil$#$\hfil\ &\hfil$#$\hfil &\quad\hfil$#$\hfil\quad &\hfil$#$\hfil &\ \hfil$#$\hfil\ &\hfil$#$\hfil &\ \hfil$#$\hfil\ \cr
{}^{\rm entire\ space}    &                 &              &                 &              & V                      &              &                 &              &               \cr
                          &                 & \diagup      &                 & /            &                        & \backslash   &                 & \diagdown    &               \cr
{}_{4\ {\rm hyperplanes}} & H               &              & \ldots          &              &                        &              &                 &              & V(3)=V(3^2)   \cr
                          &                 & \diagdown    &                 & \backslash   &                        & /            &                 & \diagup      &               \cr
{}_{\rm limit\ space}     &                 &              &                 &              & L                      &              &                 &              &               \cr
}}
\vbox{
\offinterlineskip
\halign{
 \vrule # & \strut\quad\hfill $#$\hfill \ & \vrule\vrule # & \ \hfill $#$\hfill \  & \vrule # &\ \hfill $#$\hfill \quad & \vrule # \cr
\noalign{\hrule}
          &       \omit                   & height 2pt     &                       &          &                         &          \cr
          & V(q_\Bbbk)                    &                & V(3\cdot q_\Bbbk)     &          & V(3^2\cdot q_\Bbbk)     &          \cr
          &       \omit                   & height 2pt     &                       &          &                         &          \cr
\noalign{\hrule}
          &       \omit                   & height 2pt     &                       &          &                         &          \cr
          & V                             &                & V(3)                  &          & V(3)                    &          \cr
          &       \omit                   & height 2pt     &                       &          &                         &          \cr
\noalign{\hrule}
          &       \omit                   & height 2pt     &                       &          &                         &          \cr
          & V(3)                          &                & V(3)                  &          & V(3)                    &          \cr
          &       \omit                   & height 2pt     &                       &          &                         &          \cr
\noalign{\hrule}
          &       \omit                   & height 2pt     &                       &          &                         &          \cr
          & H                             &                & L                     &          & L                       &          \cr
          &       \omit                   & height 2pt     &                       &          &                         &          \cr
\noalign{\hrule}
          &       \omit                   & height 2pt     &                       &          &                         &          \cr
          & L                             &                & L                     &          & L                       &          \cr
          &       \omit                   & height 2pt     &                       &          &                         &          \cr
\noalign{\hrule}
}
}
$$

\end{figure}

\noindent
The first and second row in the table of Figure
\ref{fig:TamelyIrr}
give contributions to \(\delta=1\).
The third and fourth row give contributions to \(\delta=2\).
Consequently,
according to equation (5.4.2) and Prop.
\ref{prp:Combinatorics},
\[L_s(2)=\frac{1}{3}\sum_{\delta=0}^2\,3^{2-\delta}\cdot\#\lbrace\Bbbk\mid\delta(9q_\Bbbk)=\delta(3q_\Bbbk)=\delta\rbrace\]
\[=\frac{1}{3}(3\cdot\lbrack\#\lbrace\Bbbk\mid\delta(q_\Bbbk)=0\rbrace
+\#\lbrace\Bbbk\mid\delta(q_\Bbbk)=1,\ V(q_\Bbbk)=V(3)\rbrace\rbrack
+1\cdot\lbrack\#\lbrace\Bbbk\mid\delta(q_\Bbbk)=1,\ V(q_\Bbbk)\ne V(3)\rbrace
+\#\lbrace\Bbbk\mid\delta(q_\Bbbk)=2\rbrace\rbrack)\]
\[=\frac{1}{3}(3\cdot\binom{u}{s}+3\cdot\left\lbrack\binom{u+\tilde{n}_1(V(3))}{s}-\binom{u}{s}\right\rbrack
+\sum_{i\ne i_0}\,\left\lbrack\binom{u+\tilde{n}_1(H_i)}{s}-\binom{u}{s}\right\rbrack
+\binom{t}{s}-\sum_{i=1}^4\,\left\lbrack\binom{u+\tilde{n}_1(H_i)}{s}-\binom{u}{s}\right\rbrack-\binom{u}{s})\]
\[=\frac{1}{3}(\binom{t}{s}+2\cdot\binom{u+\tilde{n}_1(V(3))}{s}),\]
since \(\delta=0\) doesn't occur, \(\binom{u+\tilde{n}_1(V(3))}{s}\) occurs \(3-1=2\) times,\\
and \(\binom{u}{s}\) occurs \(3-3-3+4-1=0\) times.
Note that partially \(\delta(q_\Bbbk)<\delta(3q_\Bbbk)=\delta(9q_\Bbbk)\).



\item
Finally, we consider the case of an irregular conductor \(c\) with
restrictive critical prime divisor \(3\) and square \(3^2\) of maximal defect,
that is \(\omega=1\), \(\delta(3)=1\), \(\delta(3^2)=2\).
Then \(V(3)\) is a hyperplane but \(V(3^2)=L\) is the limit space.

\smallskip
\noindent
As before, we construct a table of subspaces \(V(3\cdot q_\Bbbk)\) and \(V(3^2\cdot q_\Bbbk)\)
for each possibility of the \(3\)-ring space \(V(q_\Bbbk)\) in Figure
\ref{fig:MaxRestIrr},
where \(H\) denotes a hyperplane different from \(V(3)\) and \(L=V(c)\) denotes the limit space:

\begin{figure}[ht]
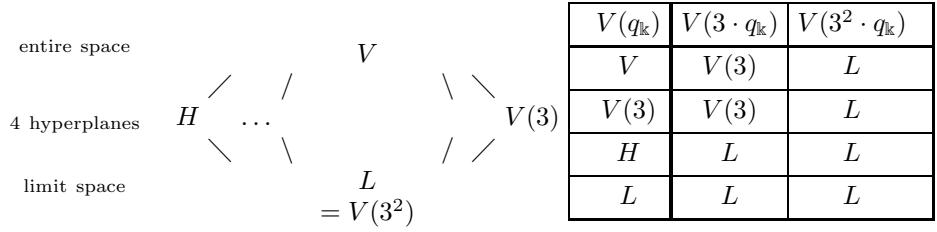

\caption{\(3\)-ring spaces \(V(f)\), \(f\mid c\), for irregular \(c\), \(\delta(3)=1\), \(\delta(3^2)=2\)}
\label{fig:MaxRestIrr}

$$
\vbox{
\offinterlineskip
\halign{
\strut\hfil$#$\hfil\quad &\ \hfil$#$\hfil\ &\hfil$#$\hfil &\ \hfil$#$\hfil\ &\hfil$#$\hfil &\quad\hfil$#$\hfil\quad &\hfil$#$\hfil &\ \hfil$#$\hfil\ &\hfil$#$\hfil &\ \hfil$#$\hfil\ \cr
{}^{\rm entire\ space}   &                 &              &                 &              & V                      &              &                 &              &               \cr
                         &                 & \diagup      &                 & /            &                        & \backslash   &                 & \diagdown    &               \cr
{}_{4\ {\rm hyperplanes}}& H               &              & \ldots          &              &                        &              &                 &              & V(3)          \cr
                         &                 & \diagdown    &                 & \backslash   &                        & /            &                 & \diagup      &               \cr
{}_{\rm limit\ space}    &                 &              &                 &              &  L                     &              &                 &              &               \cr
                         &                 &              &                 &              & =V(3^2)                &              &                 &              &               \cr
}}
\vbox{
\offinterlineskip
\halign{
 \vrule # & \strut\quad\hfill $#$\hfill \ & \vrule\vrule # & \ \hfill $#$\hfill \  & \vrule # &\ \hfill $#$\hfill \quad & \vrule # \cr
\noalign{\hrule}
          &       \omit                   & height 2pt     &                       &          &                         &          \cr
          & V(q_\Bbbk)                    &                & V(3\cdot q_\Bbbk)     &          & V(3^2\cdot q_\Bbbk)     &          \cr
          &       \omit                   & height 2pt     &                       &          &                         &          \cr
\noalign{\hrule}
          &       \omit                   & height 2pt     &                       &          &                         &          \cr
          & V                             &                & V(3)                  &          & L                       &          \cr
          &       \omit                   & height 2pt     &                       &          &                         &          \cr
\noalign{\hrule}
          &       \omit                   & height 2pt     &                       &          &                         &          \cr
          & V(3)                          &                & V(3)                  &          & L                       &          \cr
          &       \omit                   & height 2pt     &                       &          &                         &          \cr
\noalign{\hrule}
          &       \omit                   & height 2pt     &                       &          &                         &          \cr
          & H                             &                & L                     &          & L                       &          \cr     
          &       \omit                   & height 2pt     &                       &          &                         &          \cr
\noalign{\hrule}
          &       \omit                   & height 2pt     &                       &          &                         &          \cr
          & L                             &                & L                     &          & L                       &          \cr
          &       \omit                   & height 2pt     &                       &          &                         &          \cr
\noalign{\hrule}
}
}
$$

\end{figure}

\noindent
The first and second row in the table of Figure
\ref{fig:MaxRestIrr}
do not give a contribution.
Actually, only the value \(\delta=2\) occurs and thus,
according to equation (5.4.2) and Prop.
\ref{prp:Combinatorics},
\[L_s(2)=\frac{1}{3}\sum_{\delta=0}^2\,3^{2-\delta}\cdot\#\lbrace\Bbbk\mid\delta(9q_\Bbbk)=\delta(3q_\Bbbk)=\delta\rbrace\]
\[=\frac{1}{3}(\#\lbrace\Bbbk\mid\delta(q_\Bbbk)=1,\ V(q_\Bbbk)\ne V(3)\rbrace
+\#\lbrace\Bbbk\mid\delta(q_\Bbbk)=2\rbrace)\]
\[=\frac{1}{3}(\sum_{i\ne i_0}\,\left\lbrack\binom{u+n_1(H_i)}{s}-\binom{u}{s}\right\rbrack
+\binom{t}{s}-\sum_{i=1}^4\,\left\lbrack\binom{u+n_1(H_i)}{s}-\binom{u}{s}\right\rbrack-\binom{u}{s})\]
\[=\frac{1}{3}(\binom{t}{s}-\binom{u+n_1(V(3))}{s},\]
since \(\binom{u}{s}\) occurs \(-3+4-1=0\) times.
Note that partially \(\delta(q_\Bbbk)<\delta(3q_\Bbbk)=\delta(9q_\Bbbk)\).

\end{enumerate}

\end{proof}

\noindent
Now we finish the proofs of our main theorems
by inserting the simplified inner sums of Proposition
\ref{prp:InnerSum}
into the outer sums of the multiplicity formulas (5.3.1) and (5.4.1),
and using the binomial formula,
taking into consideration that \(\binom{N}{s}=0\) for \(s>N\).

\begin{proof}[Proof of Theorem
\ref{thm:Codim2Reg}]

Inserting the inner sum of Proposition
\ref{prp:InnerSum}, (1),
into formula (5.3.1),
and inserting the inner sums of Proposition
\ref{prp:InnerSum}, (2) and (3),
into formula (5.4.1)
leads to the same non-degenerate multiplicity formula (4.1.1).

(1)
First, we insert the inner sum for a regular conductor \(c\) with \(\delta_p(c)=2\),
\[L_s(2)=\sum_{i=1}^{p+1}\,\binom{u+n_1(H_i)}{s},\]
into the outer sum of equation (5.3.1),
\[
m_p(d,c)
=p^{\varrho}\cdot\frac{1}{p^{\delta(c)}}\cdot
\left\lbrack(p-1)^{\tau-1}+\sum_{s=0}^{\tau}\,(-1)^{\tau-s}\cdot p^s\cdot L_s(\delta(c))\right\rbrack,
\]
briefly writing \(n_i=n_1(H_i)\):
\[
\sum_{s=0}^{\tau}\,(-1)^{\tau-s}\cdot p^s\cdot L_s(2)
=\sum_{s=0}^{\tau}\,(-1)^{\tau-s}\cdot p^s\cdot\sum_{i=1}^{p+1}\,\binom{u+n_i}{s}\]
\[=\sum_{i=1}^{p+1}\,(-1)^{\tau-u-n_i}\cdot\sum_{s=0}^{u+n_i}\,\binom{u+n_i}{s}p^s(-1)^{u+n_i-s}
=\sum_{i=1}^{p+1}\,(-1)^{\tau-u-n_i}\cdot (p-1)^{u+n_i}.
\]
The complete expression on the right hand side of equation (5.3.1) is:
\[
m_p(d,c)
=p^{\varrho}\cdot\frac{1}{p^{2}}\cdot
\left\lbrack(p-1)^{\tau-1}+\sum_{i=1}^{p+1}\,(-1)^{\tau-u-n_i}\cdot (p-1)^{u+n_i}\right\rbrack\]
\[=p^{\varrho+\omega}\cdot(p-1)^{u}\cdot\frac{1}{p^{2}}\cdot
\left\lbrack(p-1)^{v-1}+\sum_{i=1}^{p+1}\,(-1)^{v-n_i}\cdot (p-1)^{n_i}\right\rbrack,
\]
since \(\tau-u=v\), and we have \(\omega=0\) for a regular conductor \(c\).

(2)
Second, we insert the inner sum for a freely irregular conductor \(c\) with \(\delta_3(c)=2\),
\[L_s(2)=\frac{1}{3}\left\lbrack\binom{t}{s}+2\cdot\sum_{i=1}^{4}\,\binom{\tilde{u}+n_1(H_i)}{s}\right\rbrack,\]
into equation (5.4.1), briefly writing \(n_i=n_1(H_i)\):
\[
m_3(d,c)
=3^{\varrho}\cdot\sum_{s=0}^{t}\,(-1)^{t-s}\cdot 3^s\cdot L_s(2)
=3^{\varrho}\cdot\sum_{s=0}^{t}\,(-1)^{t-s}\cdot 3^s\cdot
\frac{3}{3^2}\left\lbrack\binom{t}{s}+2\cdot\sum_{i=1}^{4}\,\binom{\tilde{u}+n_i}{s}\right\rbrack\]
\[=3^{\varrho+1}\cdot\frac{1}{3^2}
\left\lbrack\sum_{s=0}^{t}\,\binom{t}{s}\cdot 3^s\cdot (-1)^{t-s}
+2\cdot\sum_{i=1}^{4}\,(-1)^{t-\tilde{u}-n_i}\sum_{s=0}^{\tilde{u}+n_i}\,\binom{\tilde{u}+n_i}{s}\cdot 3^s\cdot (-1)^{\tilde{u}+n_i-s}\right\rbrack\]
\[=3^{\varrho+1}\cdot\frac{1}{3^2}
\left\lbrack (3-1)^{t}
+2\cdot\sum_{i=1}^{4}\,(-1)^{t-\tilde{u}-n_i}\cdot (3-1)^{\tilde{u}+n_i}\right\rbrack
=3^{\varrho+1}\cdot 2^{\tilde{u}+1}\cdot\frac{1}{3^2}
\left\lbrack 2^{t-\tilde{u}-1}
+\sum_{i=1}^{4}\,(-1)^{t-\tilde{u}-n_i}\cdot 2^{n_i}\right\rbrack\]
\[=3^{\varrho+\omega}\cdot 2^{u}\cdot\frac{1}{3^2}
\left\lbrack 2^{v-1}
+\sum_{i=1}^{4}\,(-1)^{v-n_i}\cdot 2^{n_i}\right\rbrack,
\]
since \(\omega=1\) for an irregular conductor \(c\), \(u=\tilde{u}+1\), and \(t-\tilde{u}=t+1-\tilde{u}-1=\tau-u=v\).

(3)
Third, we insert the inner sum for a tamely irregular conductor \(c\) with \(\delta_3(c)=2\),
\[L_s(2)=\frac{1}{3}\left\lbrack\binom{t}{s}+3\cdot\binom{u+\tilde{n}_1(H_{i_0})}{s}
-\sum_{i=1}^{4}\,\binom{u+\tilde{n}_1(H_i)}{s}\right\rbrack,\]
into equation (5.4.1), briefly writing \(\tilde{n}_i=\tilde{n}_1(H_i)\), and recalling that \(V_3(3^2)=H_{i_0}\):
\[
m_3(d,c)
=3^{\varrho}\cdot\sum_{s=0}^{t}\,(-1)^{t-s}\cdot 3^s\cdot L_s(2)
=3^{\varrho}\cdot\sum_{s=0}^{t}\,(-1)^{t-s}\cdot 3^s\cdot
\frac{3}{3^2}\left\lbrack\binom{t}{s}+3\cdot\binom{u+\tilde{n}_{i_0}}{s}
-\sum_{i=1}^{4}\,\binom{u+\tilde{n}_i}{s}\right\rbrack\]
\[=3^{\varrho+1}\cdot\frac{1}{3^2}
\left\lbrack\sum_{s=0}^{t}\,\binom{t}{s}\cdot 3^s\cdot (-1)^{t-s}
+3\cdot (-1)^{t-u-\tilde{n}_{i_0}}\sum_{s=0}^{u+\tilde{n}_{i_0}}\,\binom{u+\tilde{n}_{i_0}}{s}\cdot 3^s\cdot (-1)^{u+\tilde{n}_{i_0}-s}\right.\]
\[\left.-\sum_{i=1}^{4}\,(-1)^{t-u-\tilde{n}_i}\sum_{s=0}^{u+\tilde{n}_i}\,\binom{u+\tilde{n}_i}{s}\cdot 3^s\cdot (-1)^{u+\tilde{n}_i-s}\right\rbrack\]
\[=3^{\varrho+1}\cdot\frac{1}{3^2}
\left\lbrack (3-1)^{t}
+3\cdot (-1)^{t-u-\tilde{n}_{i_0}}\cdot (3-1)^{u+\tilde{n}_{i_0}}
-\sum_{i=1}^{4}\,(-1)^{t-u-\tilde{n}_i}\cdot (3-1)^{u+\tilde{n}_i}\right\rbrack\]
\[=3^{\varrho+1}\cdot 2^{u}\cdot\frac{1}{3^2}
\left\lbrack 2^{t-u}
+3\cdot (-1)^{t-u-\tilde{n}_{i_0}}\cdot 2^{\tilde{n}_{i_0}}
-(-1)^{t-u-\tilde{n}_{i_0}}\cdot 2^{\tilde{n}_{i_0}}+\sum_{i\ne i_0}\,(-1)^{t+1-u-\tilde{n}_i}\cdot 2^{\tilde{n}_i}\right\rbrack\]
\[=3^{\varrho+\omega}\cdot 2^{u}\cdot\frac{1}{3^2}
\left\lbrack 2^{v-1}
+\sum_{i=1}^{4}\,(-1)^{v-n_i}\cdot 2^{n_i}\right\rbrack,
\]
since \(\omega=1\) for an irregular conductor \(c\), \(t-u=t+1-u-1=\tau-u-1=v-1\),
and \(n_{i_0}=\tilde{n}_{i_0}+1\), but \(n_i=\tilde{n}_i\) for \(i\ne i_0\).

\end{proof}

\begin{proof}[Proof of Theorem
\ref{thm:Codim2Irr}]

Inserting the degenerate inner sums of Proposition
\ref{prp:InnerSum}, (4) and (5),
into formula (5.4.1) gives the degenerate multiplicity formula (4.2.1).

(1)
First, we insert the inner sum (4) for a wildly irregular conductor \(c\) with \(\delta_3(c)=2\),
\[L_s(2)=\frac{1}{3}\left\lbrack\binom{t}{s}+2\cdot\binom{u+\tilde{n}_1(H_{i_0})}{s}\right\rbrack,\]
into equation (5.4.1), briefly writing \(\tilde{n}_i=\tilde{n}_1(H_i)\), and recalling that \(V_3(3)=H_{i_0}\):
\[
m_3(d,c)
=3^{\varrho}\cdot\sum_{s=0}^{t}\,(-1)^{t-s}\cdot 3^s\cdot L_s(2)
=3^{\varrho}\cdot\sum_{s=0}^{t}\,(-1)^{t-s}\cdot 3^s\cdot
\frac{1}{3}\left\lbrack\binom{t}{s}+2\cdot\binom{u+\tilde{n}_{i_0}}{s}\right\rbrack\]
\[=3^{\varrho}\cdot\frac{1}{3}\left\lbrack\sum_{s=0}^{t}\,\binom{t}{s}\cdot 3^s\cdot (-1)^{t-s}
+2\cdot(-1)^{t-u-\tilde{n}_{i_0}}\sum_{s=0}^{u+\tilde{n}_{i_0}}\,\binom{u+\tilde{n}_{i_0}}{s}\cdot 3^s\cdot (-1)^{u+\tilde{n}_{i_0}-s}\right\rbrack\]
\[=3^{\varrho}\cdot\frac{1}{3}\left\lbrack (3-1)^{t}+2\cdot(-1)^{t-u-\tilde{n}_{i_0}}\cdot (3-1)^{u+\tilde{n}_{i_0}}\right\rbrack
=3^{\varrho}\cdot 2^{u+\tilde{n}_{i_0}+1}\cdot\frac{1}{3}\left\lbrack 2^{t-u-\tilde{n}_{i_0}-1}-(-1)^{t-u-\tilde{n}_{i_0}-1}\right\rbrack\]
\[=3^{\varrho}\cdot 2^{u+n_{i_0}}\cdot\frac{1}{3}\left\lbrack 2^{v-n_{i_0}-1}-(-1)^{v-n_{i_0}-1}\right\rbrack,
\]
since \(n_{i_0}=\tilde{n}_{i_0}+1\) and \(t-u= t+1-u-1=\tau-u-1=v-1\).

(2)
Second, we insert the inner sum (5) for a restrictively irregular conductor \(c\) with \(\delta_3(c)=2\),
\[L_s(2)=\frac{1}{3}\left\lbrack\binom{t}{s}-\binom{u+n_1(H_{i_0})}{s}\right\rbrack,\]
into equation (5.4.1), briefly writing \(n_i=n_1(H_i)\), and recalling that \(V_3(3)=H_{i_0}\):
\[
m_3(d,c)
=3^{\varrho}\cdot\sum_{s=0}^{t}\,(-1)^{t-s}\cdot 3^s\cdot L_s(2)
=3^{\varrho}\cdot\sum_{s=0}^{t}\,(-1)^{t-s}\cdot 3^s\cdot
\frac{1}{3}\left\lbrack\binom{t}{s}-\binom{u+n_{i_0}}{s}\right\rbrack\]
\[=3^{\varrho}\cdot\frac{1}{3}\left\lbrack\sum_{s=0}^{t}\,\binom{t}{s}\cdot 3^s\cdot (-1)^{t-s}
-(-1)^{t-u-n_{i_0}}\sum_{s=0}^{u+n_{i_0}}\,\binom{u+n_{i_0}}{s}\cdot 3^s\cdot (-1)^{u+n_{i_0}-s}\right\rbrack\]
\[=3^{\varrho}\cdot\frac{1}{3}\left\lbrack (3-1)^{t}-(-1)^{t-u-n_{i_0}}\cdot (3-1)^{u+n_{i_0}}\right\rbrack
=3^{\varrho}\cdot 2^{u+n_{i_0}}\cdot\frac{1}{3}\left\lbrack 2^{t-u-n_{i_0}}-(-1)^{t-u-n_{i_0}}\right\rbrack\]
\[=3^{\varrho}\cdot 2^{u+n_{i_0}}\cdot\frac{1}{3}\left\lbrack 2^{v-n_{i_0}-1}-(-1)^{v-n_{i_0}-1}\right\rbrack,
\]
since \(t-u=t+1-u-1=\tau-u-1=v-1\).

\end{proof}



\section{Actual occurrence of \(3\)-defect \(2\)}
\label{s:Codim2Cmpl}

\begin{example}
\label{xpl:Codim2Cmpl}

For a complex quadratic field \(K\) of discriminant \(d<-3\)
and \(3\)-class rank \(\sigma_3=\varrho_3=2\),
we determine two generating principal cubes \(\alpha_1,\alpha_2\) of ideals of \(K\),
such that \(V_3=I_3/K^3=\langle\alpha_1,\alpha_2\rangle\),
by means of
\cite[Algorithm, Step 1--5, pp. 80--81]{Ma},
similarly as in Example
\ref{xpl:Codim1Cmpl}.
That is,
we select two independent forms \(F_1,F_2\) of discriminant \(d_{F_i}=d\) and order \(3\),
calculate the composita \(F_3=F_1F_2\) and \(F_4=F_1F_2^2\),
and determine corresponding algebraic integers
\(\alpha_i=\frac{1}{2}(x_i+y_i\sqrt{d})\), for \(1\le i\le 4\).

A sufficient condition for a \(3\)-admissible conductor \(c\) over \(K\)
to be free with \(3\)-defect \(\delta_3(c)=0\) is that
\(c\mid y_i\) or \(c\mid x_i\), for all \(1\le i\le 4\),
indicated with boldface font in Table
\ref{tbl:Codim2CmplFree}.

\begin{enumerate}

\item
In Table
\ref{tbl:Codim2CmplFree}
we present the discriminants \(d\) with smallest values
of complex quadratic fields \(K\) having \(3\)-class rank \(\varrho_3=2\),
for which the prime conductors \(2\le c\le 5\)
are free with \(3\)-defect \(\delta_3(c)=0\).
According to formula (3.2.1),
where we have to put \(p=3\), \(\varrho=2\), \(\omega=0\), \(\tau=1\),
the corresponding complex cubic fields \(L\) arise in families of multiplicity \(9\)
with discriminant \(d_L=c^2d\).
They occur in the table
of Fung and Williams
\cite{FgWl}
and are discussed in
\cite[Part 1, d), p. S56]{Ma3}.\\
It should be pointed out that in the restrictive cases \(\delta_3(c)=1\)
there is also exactly one \(\alpha_iK(c)^3\in I_3(c)\cap R_cK(c)^3/K(c)^3\),
since otherwise we had \(\delta_3(c)=2\),
which is impossible for a prime conductor,
since the first inequality (2.3.3) enforces \(\delta_3(c)\le t+w=1\).

\renewcommand{\arraystretch}{1.0}

\begin{table}[ht]
\caption{First occurrence of free prime conductors \(2\le c\le 5\) for \(d<-3\), \(\varrho_3=2\)}
\label{tbl:Codim2CmplFree}
\begin{center}
\begin{tabular}{|r|l|c|r|c||c|c|c|}
\hline
 \(d\)         & condition             & \(F_i=(A,B,C)\)    & \(r_i\)   &   \((x_i,y_i)\)              & \(c\)  & \(d_L=c^2d\)   & \(m_3(d,c)\) \\
\hline
 \(-103\,627\) & \(\equiv 5\pmod{8}\)  &    \((47,33,557)\) &   \(571\) &    \((23\,704,\mathbf{42})\) &  \(2\) &  \(-414\,508\) &        \(9\) \\
               &                       &    \((67,25,389)\) &    \(67\) &         \((888,\mathbf{2})\) &        &                &              \\
               &                       & \((139,-117,211)\) &   \(139\) &      \((2\,648,\mathbf{6})\) &        &                &              \\
               &                       &   \((77,-13,337)\) &   \(337\) &    \((11\,250,\mathbf{16})\) &        &                &              \\
\hline
  \(-96\,027\) & \(\equiv +3\pmod{9}\) &    \((61,29,397)\) &    \(61\) &         \((209,\mathbf{3})\) &  \(3\) &  \(-864\,243\) &        \(9\) \\
               &                       &   \((103,81,249)\) &   \(103\) &         \((956,\mathbf{6})\) &        &                &              \\
               &                       &  \((123,117,223)\) &   \(223\) &      \((6\,595,\mathbf{3})\) &        &                &              \\
               &                       & \((157,-123,177)\) &   \(157\) &      \((3\,823,\mathbf{3})\) &        &                &              \\
\hline
  \(-12\,067\) & \((d/5)=-1\)          &    \((23,13,133)\) &   \(199\) &     \((1\,164,\mathbf{50})\) &  \(5\) &  \(-301\,675\) &        \(9\) \\
               &                       &     \((47,23,67)\) &    \(67\) &         \((\mathbf{475},9)\) &        &                &              \\
               &                       &    \((49,-43,71)\) &   \(163\) &     \((\mathbf{2\,045},33)\) &        &                &              \\
               &                       &    \((53,-21,59)\) &   \(229\) &      \((6\,909,\mathbf{5})\) &        &                &              \\
\hline
\end{tabular}
\end{center}
\end{table}

\item
Concerning the actual occurrence of complex cubic fields \(L\)
with discriminant \(d_L=c^2d\) and \(3\)-defect \(\delta_3(c)=2\),
the smallest case we found (not necessarily minimal) is presented in Table
\ref{tbl:Codim2CmplRstr}.

\renewcommand{\arraystretch}{1.0}

\begin{table}[ht]
\caption{Small example of a restrictive conductor \(c\) with \(\delta_3(c)=2\)}
\label{tbl:Codim2CmplRstr}
\begin{center}
\begin{tabular}{|c|l|c|r|c||c|c|c|}
\hline
 \(d\)       & conditions            & \(F_i=(A,B,C)\)  & \(r_i\) & \((x_i,y_i)\) & \(c\)  & \(d_L=c^2d\)      & \(m_3(d,c)\) \\
\hline
 \(-4\,027\) & \(\equiv 5\pmod{8}\)  &    \((13,9,79)\) &  \(13\) &    \((69,1)\) & \(90\) & \(-32\,618\,700\) &        \(9\) \\
             & \((d/5)=-1\)          &   \((17,11,61)\) &  \(61\) &   \((43,15)\) &        &                   &              \\
             & \(\equiv -1\pmod{3}\) &    \((19,1,53)\) &  \(19\) &   \((153,1)\) &        &                   &              \\
             &                       &  \((29,-27,41)\) &  \(43\) &   \((416,6)\) &        &                   &              \\
\hline
\end{tabular}
\end{center}
\end{table}

\noindent
It is obvious that
\(\alpha_2\in\mathcal{O}_5\), \(\alpha_3\in\mathcal{O}_9\), \(\alpha_4\in\mathcal{O}_2\),
and thus the occupation numbers of the four hyperplanes \(H_i<V_3\) in Figure
\ref{fig:Codim2CmplRstr}
with respect to the restrictive \(3\)-admissible conductor \(c=3^2\cdot 2\cdot 5=90\)
are given by \((n_1,n_2,n_3,n_4)=(0,1,1,1)\),
whence formula (4.1.1) yields the \(3\)-multiplicity \(m_3(-4\,027,90)=9\).
This has been confirmed by Belabas using his CCF algorithm
\cite{Be1,Be2}
and PARI
\cite{PARI}
in personal communication.


\begin{figure}[ht]
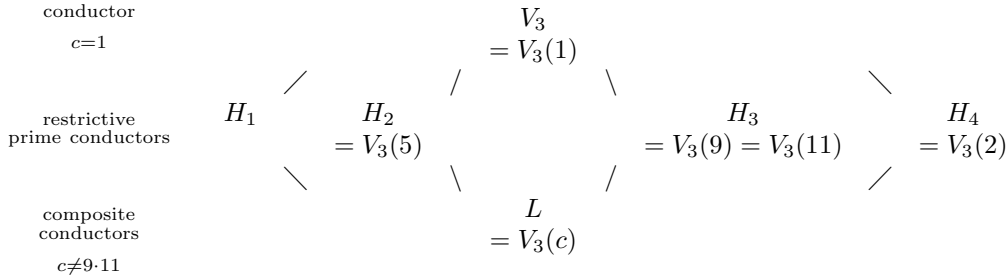

\caption{Positions of \(3\)-ring spaces \(V_3(c)\), \(c\mid 990\), over \(K=\mathbb{Q}(\sqrt{-4\,027})\)}
\label{fig:Codim2CmplRstr}

$$
\vbox{
\offinterlineskip
\halign{
\strut\quad\hfil$#$\hfil\quad  & \quad\hfil$#$\hfil\quad &\hfil$#$\hfil &\quad\hfil$#$\hfil\quad &\hfil$#$\hfil &\quad\hfil$#$\hfil\quad &\hfil$#$\hfil &\quad\hfil$#$\hfil\quad &\hfil$#$\hfil &\quad\hfil$#$\hfil\quad \cr
 {}^{\rm conductor}            &                         &              &                        &              & V_3                    &              &                        &              &                        \cr
 {}^{c=1}                      &                         &              &                        &              & =V_3(1)                &              &                        &              &                        \cr
                               &                         & \diagup      &                        &      /       &                        & \backslash   &                        & \diagdown    &                        \cr
{}_{\rm restrictive}           & H_1                     &              & H_2                    &              &                        &              & H_3                    &              & H_4                    \cr
{}^{\rm prime\ conductors}     &                         &              &       = V_3(5)         &              &                        &              &   = V_3(9) = V_3(11)  &              &        = V_3(2)        \cr
                               &                         & \diagdown    &                        & \backslash   &                        &      /       &                        & \diagup      &                        \cr
 {}_{\rm composite}            &                         &              &                        &              &         L              &              &                        &              &                        \cr
 {}^{\rm conductors}           &                         &              &                        &              &        = V_3(c)        &              &                        &              &                        \cr
 {}^{c\ne 9\cdot 11}           &                         &              &                        &              &                        &              &                        &              &                        \cr
}}                                                                                                                                              
$$

\end{figure}

\noindent
It is illuminating to compare \(3\)-multiplicities \(m_3(d,c)\) of various conductors \(c\) over \(d=-4\,027\) in Table
\ref{tbl:ComparingConductors},
using the order of \((n_1,n_2,n_3,n_4)\) in Table
\ref{tbl:RstrMult}.

\renewcommand{\arraystretch}{1.0}

\begin{table}[ht]
\caption{Various conductors \(c\) dividing \(990\) over \(K=\mathbb{Q}(\sqrt{-4\,027})\)}
\label{tbl:ComparingConductors}
$$
\vbox{
\offinterlineskip
\halign{
 \vrule # & \strut\ \hfil$#$\hfil\ &   \vrule # & \ $#$\   &\ \hfil$#$\hfil\ & \vrule # &\ \hfil$#$\hfil\  & \vrule # & \ \hfil$#$\hfil\     & \vrule # & \ \hfil$#$   \    & \vrule # \cr
\noalign{\hrule}
          &        c               &            & \tau     & \delta_3(c) &          &   m_3(d,c)           &          & (n_1,\ldots,n_4)     &          &      d_L=c^2d     &          \cr
          &   \omit                & height 2pt &          &             &          &                      &          &                      &          &                   &          \cr
\noalign{\hrule}
          &   \omit                & height 2pt &          &             &          &                      &          &                      &          &                   &          \cr
          &        1               &            &   0      &      0      &          &          4           &          &  (0,0,0,0)           &          &           -4\,027 &          \cr
          &   \omit                & height 2pt &          &             &          &                      &          &                      &          &                   &          \cr
\noalign{\hrule}
          &   \omit                & height 2pt &          &             &          &                      &          &                      &          &                   &          \cr
          & 2, 5, 3^2, 11          &            &   1      &      1      &          &          0           &          &  (1,0,0,0)           &          &                   &          \cr
 & 2\cdot 5,2\cdot 3^2,2\cdot 11,5\cdot 3^2,5\cdot 11 & & 2 &     2      &          &          0           &          &  (1,1,0,0)           &          &                   &          \cr
          &   \omit                & height 2pt &          &             &          &                      &          &                      &          &                   &          \cr
\noalign{\hrule}
          &   \omit                & height 2pt &          &             &          &                      &          &                      &          &                   &          \cr
          & 3^2\cdot 11            &            &   2      &      1      &          &          9           &          &  (2,0,0,0)           &          &    -39\,468\,627  &          \cr
          &   \omit                & height 2pt &          &             &          &                      &          &                      &          &                   &          \cr
\noalign{\hrule}
          &   \omit                & height 2pt &          &             &          &                      &          &                      &          &                   &          \cr
          & 2\cdot 3^2\cdot 11,5\cdot 3^2\cdot 11 & &   3  &      2      &          &          0           &          &  (2,1,0,0)           &          &                   &          \cr
          &   \omit                & height 2pt &          &             &          &                      &          &                      &          &                   &          \cr
\noalign{\hrule}
          &   \omit                & height 2pt &          &             &          &                      &          &                      &          &                   &          \cr
          & 2\cdot 5\cdot 3^2      &            &   3      &      2      &          &          9           &          &  (1,1,1,0)           &          &    -32\,618\,700  &          \cr
          & 2\cdot 5\cdot 11       &            &   3      &      2      &          &          9           &          &  (1,1,1,0)           &          &    -48\,726\,700  &          \cr
          &   \omit                & height 2pt &          &             &          &                      &          &                      &          &                   &          \cr
\noalign{\hrule}
          &   \omit                & height 2pt &          &             &          &                      &          &                      &          &                   &          \cr
          & 2\cdot 5\cdot 3^2\cdot 11 &         &   4      &      2      &          &          9           &          &  (2,1,1,0)           &          & -3\,946\,862\,700 &          \cr
\noalign{\hrule}
}
}
$$

\end{table}

\item
We conclude by a survey of minimal discriminants for the occurrence of
an irregular prime power conductor \(c=3^2\) in the case \(p=3\), \(d\equiv -3\pmod{9}\)
in Table
\ref{tbl:IrrPrmPwr}.
Some results for \(\varrho_3=2\) are due to personal communication by Belabas,
who used his CCF algorithm 
\cite{Be1,Be2}
and PARI
\cite{PARI}
to determine
the discriminants \(-42\,591\), \(-128\,451\), and \(-2\,069\,688\).

\renewcommand{\arraystretch}{1.0}

\begin{table}[ht]
\caption{Quadratic fields admitting an irregular conductor \(c=9\)}
\label{tbl:IrrPrmPwr}
\begin{center}
\begin{tabular}{|r||c|c||c|c||c|r|r|}
\hline
 \(d\)            &\(\varrho_3\) & \(\sigma_3\) & \(\delta_3(3)\) & \(\delta_3(9)\) & \(c\) & \(d_L=c^2d\)       & \(m_3(d,c)\) \\
\hline
           \(-3\) &        \(0\) &        \(1\) &           \(1\) &           \(1\) & \(9\) &           \(-243\) &        \(1\) \\
\hline
          \(-39\) &        \(0\) &        \(0\) &           \(0\) &           \(0\) & \(9\) &        \(-3\,159\) &        \(3\) \\
\hline
         \(-255\) &        \(1\) &        \(1\) &           \(1\) &           \(1\) & \(9\) &       \(-20\,655\) &        \(3\) \\
         \(-687\) &        \(1\) &        \(1\) &           \(0\) &           \(1\) & \(9\) &                --- &        \(0\) \\
      \(-3\,387\) &        \(1\) &        \(1\) &           \(0\) &           \(0\) & \(9\) &      \(-274\,347\) &        \(9\) \\
\hline
      \(-8\,751\) &        \(2\) &        \(2\) &           \(1\) &           \(2\) & \(9\) &                --- &        \(0\) \\
     \(-42\,591\) &        \(2\) &        \(2\) &           \(1\) &           \(1\) & \(9\) &   \(-3\,449\,871\) &        \(9\) \\
    \(-128\,451\) &        \(2\) &        \(2\) &           \(0\) &           \(1\) & \(9\) &                --- &        \(0\) \\
 \(-2\,069\,688\) &        \(2\) &        \(2\) &           \(0\) &           \(0\) & \(9\) & \(-167\,644\,728\) &       \(27\) \\
\hline
 \(-4\,447\,704\) &        \(3\) &        \(3\) &           \(1\) &           \(2\) & \(9\) &                --- &        \(0\) \\
\hline
           \(24\) &        \(0\) &        \(1\) &           \(1\) &           \(1\) & \(9\) &         \(1\,944\) &        \(1\) \\
           \(69\) &        \(0\) &        \(1\) &           \(0\) &           \(1\) & \(9\) &                --- &        \(0\) \\
          \(717\) &        \(0\) &        \(1\) &           \(0\) &           \(0\) & \(9\) &        \(58\,077\) &        \(3\) \\
\hline
\end{tabular}
\end{center}
\end{table}

\end{enumerate}

\end{example}



\section{Acknowledgements}
\label{s:Thanks}

We gratefully acknowledge that this research was
supported by the Austrian Science Fund (FWF): P 26008-N25.
Further, we thank the anonymous referees for valuable suggestions
to improve the readability.




\end{document}